\documentclass{article}
\usepackage{graphicx} % Required for inserting images
\usepackage{graphicx} % Required for inserting images
\usepackage[english,french]{babel}
\usepackage{amsmath}
\usepackage{framed}
\usepackage[utf8]{inputenc}
\usepackage{amsmath,amsfonts,amssymb}
\usepackage{pifont}
\usepackage{hyperref}
\usepackage{csquotes}
\usepackage{empheq}
\usepackage{tcolorbox}
\usepackage{xcolor} % for custom colors
\graphicspath{{./images}}
\usepackage{geometry}
\usepackage{hyperref}
\usepackage{fancybox}
\usepackage{tikz}
\usepackage{ragged2e}
\usepackage{lmodern}
\usepackage{xcolor}
\usepackage{quoting}
\usepackage{ragged2e}
\usepackage{setspace}
\usepackage{titlesec}
\usepackage[T1]{fontenc}
\usepackage{mathtools}
\usepackage{array}
\usepackage{amsfonts,amsmath,amssymb}
\usepackage{pifont}
\usepackage{graphicx}
\usepackage{bbold}
\usepackage[svgnames]{xcolor}
\usepackage{comment}
\usepackage{fancyhdr}
\usepackage{fancybox}
\usepackage{caption}
\usepackage{appendix}
\usepackage{amsthm} 
\usepackage{longtable}

\usepackage{mathrsfs}

\newtheorem{theorem}{Theorem}[section]

\newtheorem{proposition}[theorem]{Proposition}
\newtheorem{lemma}[theorem]{Lemma}

\theoremstyle{definition}
\newtheorem{definition}[theorem]{Definition}

\theoremstyle{remark}
\newtheorem{remark}[theorem]{Remark}

\newcommand{\R}{\mathbb{R}}
\newcommand{\dd}{\,\mathrm{d}}
\newcommand{\PP}{\mathbb{P}}
\newcommand{\PV}{PV}
\newcommand{\QQ}{\mathbb{Q}}
\newcommand{\EE}{\mathbb{E}}
\newcommand{\Law}{\mathcal{L}}
\newcommand{\spt}{\operatorname{supp}}

\newcommand{\Xh}{X_{\mathrm{hyb}}}

\begin{document}
%=================================================================
\begin{center}
{\LARGE\bfseries
New link between the fractional p-Laplacian operators and a class of McKean-Vlasov flight type processes\\[6pt]
}

\vspace{1.5em}

{\large
Houssine~EL~JEDDAOUI\, \quad Dany~NABAB\,
}

\vspace{0.8em}

\vspace{0.8em}

{\small \today}
\end{center}

\vspace{1em}

%\tableofcontents
\noindent
\vspace{1em}
 
%=================================================================
\noindent\textbf{Abstract.}
We prove the existence of a McKean--Vlasov stochastic process with jumps associated to
the nonlinear parabolic equation
$\partial_t u = \Delta_p u + \Delta_p^s u$
in $\R^N\times(0,\infty)$, where $\Delta_p$ is the $p$-Laplacian and $\Delta_p^s$ is the fractional
$p$-Laplacian.
The algorithm used is the following : first, after proving the existence of a solution for the PDE presented earlier, we rewrite it as a nonlinear Fokker-Planck-Kolmogorov equation whose solution-measure is guaranted when $p\ge4$. Then we solve the martingale problem associated to our FPKE via a new nonlinear supersition principle. Finally, thanks to the martingale solution obtained, we derive the existence of a weak solution for the McKean-Vlasov's type SDE with jumps whose infinitesimal generator is a << hybrid version >> of the operator $\Delta_p+\Delta_p^s$.
\vspace{0.5em}
 
\noindent\textbf{Keywords:}
$p$-Laplacian;
fractional $p$-Laplacian; maximal monotone operator; Poisson random measure ; L\'evy process;
McKean--Vlasov SDE  with jumps ; nonlinear Fokker--Planck--Kolmogorov equation ;
 superposition principle.
 
\vspace{1em}
 
\noindent\textbf{MSC 2020:}
60J65, 35K92, 60H10, 35R11, 60G51, 35Q84.
 
\vspace{1em}
 
\section{Introduction}

\noindent
One of the most fruitful ideas in twentieth-century mathematics
is the correspondence between differential operators, stochastic processes and parabolic Partial Differential Equations. By correspondence, we mean that an abstract operator $A$ may be the infinitesimal generator of a general stochastic process $X_t$, in the sense that \[Af(x) = \lim_{h\to 0}\frac{\mathbb{E}[f(X_{t+h})|X_t=x]-f(x)}{h},\]
with $\displaystyle\mathbb{E}[f(X_{t+h})|X_t=x]:=\int_{\R^N}f(y)p(t,x;t+h,y)dy$, where the density transition $p(s,x;t,y)$ is the fundamental solution of the PDE
\begin{align*}
\begin{aligned}
\frac{\partial p}{\partial t}=Ap\quad\mbox{ with }\quad p(s,x;s,y)=\delta_y(x),
\end{aligned}
\end{align*}
$\delta_y$ being the Dirac delta function.

\medskip
\noindent
The idea of such a correspondence may be traced back to Bachelier \cite{Bachelier1900},\cite{Kahane1998} in 1900 who discovered during his thesis that the randomness of stock market price fluctuations is described by a famous process called Brownian motion, and that the density transition $p(t,x)=\frac{1}{\sqrt{t}}e^{-\frac{\pi x^2}{t}}$ is the fundamental solution of the heat equation
\begin{align}\label{heat}
\begin{aligned}
\frac{\partial u}{\partial t}=\Delta u\quad\mbox{ with }\quad u(0,x)=\delta_y(x).
\end{aligned}
\end{align}
Later, Einstein \cite{Einstein1905} in 1905, followed by Smoluchowski in 1906\cite{Smoluchowski1906}, theorized that the distribution of the displacement of brownian particles, that is small material particle suspended in a fluid (for example, a pollen grain, a microscopic dust, or a small ball), measured over a fixed time interval, also satisfy the heat equation \eqref{heat}. In 1910-1920, Adriaan Fokker and Max Planck had studied, before Kolmogorov, equations describing the evolution of the density of particles subject to drift and diffusion. Meanwhile, Andrey Markov had introduced, at the beginning of the twentieth century, the chains that now bear his name. For a Markov process \(X_t\), the basic idea is that the future depends on the past only through the present state: \[ \mathbb{P}\left(X_t \in A \mid \mathcal{F}_s\right) = \mathbb{P}\left(X_t \in A \mid X_s\right), \qquad s<t. \] This property makes it possible to introduce a transition probability \[ P_{s,t}(x,A) = \mathbb{P}\left(X_t \in A \mid X_s=x\right). \] In the time-homogeneous case, this probability depends only on \(t-s\), and we write \(P_t(x,A). \) These transition kernels satisfy the Chapman--Kolmogorov equation: \[P_{t+s}(x,A) = \int P_t(x,dy)\,P_s(y,A).\] This identity gives rise to a semigroup structure. For a test function \(\varphi\), one defines \[ P_t\varphi(x) = \mathbb{E}\left[\varphi(X_t)\,|\,
X_t=x\right]. \] Then \(P_{t+s} = P_tP_s.\) From this semigroup, one defines the infinitesimal generator \[\mathcal{L}\varphi = \lim_{h\downarrow 0} \frac{P_h\varphi-\varphi}{h}=
\lim_{h\downarrow 0}\mathbb{E}
\left[\left.\frac{f\bigl(x+\Delta_h X\bigr)-f(x)}{h}\,\right|\,
X_t=x\right].\] The generator contains the local information governing the dynamics of the process. It is this operator that leads to the two Kolmogorov equations. This is where Andrey Kolmogorov plays a decisive role. In his 1931 paper \cite{Kolmogorov1931}, he developed an analytical theory of continuous-time Markov processes and derived the equations now known as the \emph{Kolmogorov backward equation} and the \emph{Kolmogorov forward equation}. The multivariate Taylor expansion gives
\[\begin{aligned}
f(x+\Delta_h X)=f(x)+\sum_{i=1}^{d}
\partial_i f(x)\,\Delta_h X^i+
\frac{1}{2}
\sum_{i,j=1}^{d}
\partial_{ij} f(x)\,
\Delta_h X^i \Delta_h X^j+r_h.
\end{aligned}\]
Taking the conditional expectation yields
\[\begin{aligned}
P_{t,t+h}f(x)-f(x)
&=\sum_i
\partial_i f(x)\,
\mathbb{E}
\left[\Delta_h X^i\,\middle|\,
X_t=x\right]+
\frac{1}{2}
\sum_{i,j}
\partial_{ij}f(x)\,
\mathbb{E}
\left[
\Delta_h X^i\Delta_h X^j
\,\middle|\,
X_t=x
\right]\\&+
\mathbb{E}
\left[r_h\,\middle|\,X_t=x\right].
\end{aligned}
\]
Dividing by \(h\), we obtain
\[\begin{aligned}
\frac{P_{t,t+h}f(x)-f(x)}{h}
&=\sum_i
\partial_i f(x)\,
\frac{\mathbb{E}\left[\Delta_h X^i\,\middle|\,X_t=x\right]
}{h}+\frac{1}{2}
\sum_{i,j}\partial_{ij}f(x)\,
\frac{\mathbb{E}
\left[\Delta_h X^i\Delta_h X^j
\,\middle|\,X_t=x\right]}{h}
\\
&+
\frac{\mathbb{E}\left[
r_h\,\middle|\,X_t=x\right]}{h}.
\end{aligned}\]
We then define the first infinitesimal moment by
\[a_i(t,x)=\lim_{h\downarrow 0}
\frac{1}{h}\mathbb{E}
\left[X_{t+h}^i-X_t^i\,\middle|\,X_t=x\right]\]
and the second infinitesimal moment by
\[b_{ij}(t,x)=\lim_{h\downarrow 0}\frac{1}{h}\mathbb{E}\left[
\left(X_{t+h}^i-X_t^i\right)
\left(X_{t+h}^j-X_t^j\right)
\,\middle|\,X_t=x\right].\]
Since $X_t$ is continuous in time, the remainder term satisfies
\[\lim_{h\downarrow 0}\frac{1}{h}\mathbb{E}\left[r_h\,\middle|\,X_t=x\right]=0,\]
we may pass to the limit and obtain
\[\mathcal{L}_t f(x)=\sum_{i=1}^{d}
a_i(t,x)\,\partial_i f(x)+\frac{1}{2}
\sum_{i,j=1}^{d}b_{ij}(t,x)\,\partial_{ij}f(x).\]
This is the general form of the Kolmogorov differential operator for a diffusion process. For the backward equation, one considers \[ u(t,x) = \mathbb{E}\left[\varphi(X_t)\,|\,X_t=x\right]. \] Since \(u(t,\cdot) = P_t\varphi,\) one obtains \[ \frac{\partial u}{\partial t} = \mathcal{L}_tu\quad\mbox{ with }\quad u(0,x)=\varphi(x). \] For the forward equation, one instead considers the evolution of the law \(\mu_t\) of \(X_t\). It satisfies \[\partial_t\mu_t = \mathcal{L}_t^{*}\mu_t\quad\mbox{ with }\quad \mu_0=\mbox{Law}(X_0),\] where \(\mathcal{L}_t^{*}\) denotes the adjoint of the generator $\mathcal{L}_t$, that is
\begin{align*}
\mathcal{L}_t^\ast p(x)=-\sum_{i=1}^{d}
\partial_i\!\left(a_i(t,x)\,p(x)\right)+\frac{1}{2}
\sum_{i,j=1}^{d}
\partial_{ij}\!\left(b_{ij}(t,x)\,p(x)\right).
\end{align*}
Also, if the transition kernel \(P_{s,t}(x,dy)\) is absolutely continuous with respect to the Lebesgue measure \(dy\), that is,
\begin{align*}
P_{s,t}(x,dy) = p(s,x;t,y)\,dy,
\end{align*}
then the density transition $p(s,x;t,y)$ is the fundamental solution of either the \emph{backward equation} \[\frac{\partial p(x)}{\partial s}
=Ap(x)\quad\mbox{ with }\quad p(t,x;t,y)=\delta_y(x)\] if the initial time $s$ and the initial position $y$ are variable, or the \emph{forward equation} \[\frac{\partial p(y)}{\partial t}=A^{\star}p(y)\quad\mbox{ with }\quad p(s,x;s,y)=\delta_x(y)\] if the final time $t$ and the final position $x$ are variable.\\\\ 
Consequently, the correspondence between heat equation and brownian motion has been generalized by Kolmogorov to a correspondence between \emph{Kolmogorov equations}, and Markov processes.\\\\ At the end of the 1920s and during the 1930s, the theory of \emph{infinitely divisible distributions} emerged, for every integer \(n \geq 1\), that is the theory for processes such that there exist i.i.d. random variables \(X_{n,1},\ldots,X_{n,n}\) satisfying \[X = X_{n,1}+\cdots+X_{n,n}.\]
Bruno de Finetti\cite{DeFinetti1929} formulated the problem of infinite divisibility as early as 1929. Kolmogorov\cite{Kolmogorov1931}, Paul L\'evy\cite{Levy1934}, and Aleksandr Khintchine\cite{Khintchine1937} subsequently obtained various canonical representation formulas during the 1930s. The resulting formula is now known as the L\'evy--Khintchine formula: \[ \mathbb{E}\left[e^{i\xi\cdot L_t}\right] = e^{t\psi(\xi)}, \] where \[ \psi(\xi) = ib\cdot\xi - \frac{1}{2}\xi^{\top}Q\xi + \int_{\mathbb{R}^d\setminus\{0\}} \left( e^{i\xi\cdot z} - 1 - i\xi\cdot z\,\mathbf{1}_{\{|z|<1\}} \right) \nu(dz). \] This formula reveals that the noise may contain three components: \(\text{drift} + \text{Brownian component} + \text{jumps}.\)\\\\ At the same time, Paul Lévy\cite{Levy1934} had already discovered a decomposition of processes with independent increments into a continuous part and a part associated with jumps. Itô\cite{Ito1951} later provided a rigorous probabilistic formulation and construction of this decomposition, notably by using Poisson random measures.Paul L\'evy realized that such processes can be decomposed into a continuous part and a sum of jumps. Kiyosi It\^o later gave a rigorous probabilistic construction of this decomposition. This is known as the \emph{L\'evy--It\^o decomposition}. It takes the form \[ L_t = bt + \sigma B_t + \int_0^t\int_{|z|\geq 1} z\,N(ds,dz) + \int_0^t\int_{|z|<1} z\,\widetilde N(ds,dz), \] 
where \(N(dt,dz)\) is a Poisson random measure with intensity \(dt\,\nu(dz),\) $\nu$ is a Lévy measure satisfying \[ \nu\bigl(\{|z|\geq 1\}\bigr)<\infty, \]
and $\widetilde N(dt,dz)$ a compensated Poisson random measure defined by \[ \widetilde N(dt,dz) = N(dt,dz)-dt\,\nu(dz). \]
From this point onward, one has the mathematical ingredients required to write SDEs involving \( dt, dB_t, N(dt,dz)\) and \( \widetilde N(dt,dz),\) namely
\[dL_t=b\,dt+\sigma\,dB_t+\int_{|z|\geq 1} z\,N(dt,dz)+\int_{|z|<1} z\,\widetilde{N}(dt,dz).\]Later, Gikhman et Skorokhod generalized Ito's SDE to a general non-autonomous SDE with jumps for time-inhomogeneous Markov jump-diffusion, namely
\begin{align}\label{SDE_Markov}
dX_t=b\bigl(t,X_{t^-}\bigr)\,dt
+\sigma\bigl(t,X_{t^-}\bigr)\,dB_t+\int_E\gamma\bigl(t,X_{t^-},z\bigr)\,N(dt,dz).
\end{align}
Markov processes of the type \eqref{SDE_Markov} are very useful in many applications, and especially in particle physics to describe the random behavior of a particle over a small time interval $dt$. Indeed, if $X_t$ refers to the position of such a particle over the time $t$, then for each time-position $(t,X_t)$ it may be subject to several phenomena (external force, electric field, magnetic field, friction, potential, etc.) which is described by $b(t,X_t)$ (called \textit{drift}). Besides, due to numerous collisions with the molecules of a fluid, thermal agitation, or microscopic effects too complex to be described individually, $X_t$ may also be subject to random microscopic fluctuations (called \textit{Brownian noise}) modeled by $B_t$ with an intensity $\sigma(t,X_t)$ (called \textit{diffusion coefficient}). In the case of a system of $N$ particles $(X_t^i)_{i=1,N}$, we may assume that they interact with each other. Therefore, instead of tracking each particle individually, their collective behavior must take into account the average of these interactions, that is for each $i\in\{1,N\}$, we replace $b(t,X_t^i)$ and $\sigma(t,X_t^i)$ respectively by 
\begin{align}\label{particles}
\frac{1}{N}\sum_{j=1}^Nb(X_t^i,X_t^j)\quad\mbox{ and }\quad\frac{1}{N}\sum_{j=1}^N\sigma(X_t^i,X_t^j)
\end{align}
The picture so far is entirely based on \og linear\fg{} processes, that is processes whose dynamic are independent of their own law.
In 1938, during his study on the dynamics of a very large number of charged particles, Vlasov\cite{Vlasov1938} proposed a new macroscopic description of \eqref{particles} when $N\rightarrow\infty$ in the particular case 
\begin{align*}
b(t,X_t^i,X_t^j)=\begin{pmatrix}
V_t^i\\F(X_t^i-X_t^j)
\end{pmatrix}\quad\mbox{ and }\quad\sigma(t,X_t^i,X_t^j)=0,
\end{align*}
where $V_t^i$ refers to the velocity of the process $Z_t^i=(X_t^i,V_t^i)$ and $F(x)$ denotes the force that a particle at the origin would exert on a particle at $x$. Actually, since
\begin{align*}
\frac{1}{N}\sum_{j=1}^NF(X_t^i-X_t^j)\underset{N\rightarrow\infty}{\longrightarrow}F[\rho]:=\int_{\R^d}F(X_t-y)\rho(dy),
\end{align*}
where $\rho$ is the density of $Z_t$, corresponding to the solution of a nonlinear version of Fokker-Planck equation, which is
\begin{align*}
\frac{\partial \rho}{\partial t}=-\nabla_{x,v}.\left(F[\rho]\rho\right),
\end{align*} 
known as \textit{Vlasov-type equation}. In Ito's formulation, $Z_t$ may be seen as the solution of the Mean-field SDE
\begin{align*}
dZ_t=b(Z_t,\mbox{Law}(Z_t))dt\quad\mbox{ with }\quad b(Z_t,\mbox{Law}(Z_t))=\begin{pmatrix}V_t\\(F\star\rho_t)(X_t)\end{pmatrix},
\end{align*}
where $\mbox{Law}(Z_t):=\rho_t$ and $\star$ refers to the classical convolution product.\\\\
As early as 1956, Kac\cite{Kac1956} proposed replacing the complicated dynamics of a gas by a stochastic particle system undergoing random binary collisions. At each collision, the velocities of two particles change abruptly. From a probabilistic point of view, these are already jump processes. Kac's program was precisely to relate this random microscopic particle system to a macroscopic kinetic equation of Boltzmann type. Schematically, one may imagine that the velocities \((V_1,\ldots,V_N) \) remain constant between two collisions, and that at a random time a pair \((i,j)\) is selected and undergoes a transformation \((V_i,V_j) \longmapsto (V_i',V_j'). \) Therefore, the trajectory of \(V_i(t)\) is discontinuous: \[ V_i(t) = V_i(t^-) + \Delta V_i(t). \]
McKean\cite{McKean1966,McKean1967} subsequently worked extensively on this probabilistic interpretation of kinetic theory. His work in the 1960s and 1970s connected, in particular, the notion of molecular chaos, interacting particle systems, and the Boltzmann equation, see \cite{McKean1975}. Hiroshi Tanaka\cite{Tanaka1978} was the first to give a rigorous probabilistic representation of the Boltzmann equation as a stochastic differential equation driven by jumps whose intensity depends on the law of the solution itself --- unknowingly writing the first McKean--Vlasov SDE with jumps.
On a probability space $(\Omega,\mathcal{F},P)$
equipped with a filtration $\{\mathcal{F}_t\}_{t\geq 0}$,
he constructed the nonlinear SDE with jumps:
\begin{equation*}
X(t) = X(0)
+ \int_{(0,t]\times S}
a\bigl(X(s-),Y(s-,\alpha),\theta,\varphi\bigr)
\,N(\dd s\,\dd\theta\,\dd\varphi\,\dd\alpha),
\quad\text{a.s.},
\end{equation*}
where $S=(0,\pi)\times(0,2\pi)\times(0,1)$
is the parameter space,
$N(\dd s\,\dd\theta\,\dd\varphi\,\dd\alpha)$
is a Poisson random measure on $(0,\infty)\times S$
with intensity $Q(\dd\theta)\,\dd\varphi\,\dd\alpha$
($Q$ being a probability measure on $(0,\pi)$),
and $Y(t,\alpha)$ is an auxiliary process
whose law coincides with that of $X(t)$,
encoding the dependence on
$\mu_t=\mathcal{L}(X(t))$
via uniform sampling $\alpha\in(0,1)$.
The function
$a(x,y,\theta,\varphi)$
gives the velocity change resulting from
a binary collision between particles
with velocities $x$ and $y$
under scattering parameters $(\theta,\varphi)$.
This work is now regarded as one of the classical foundations of the probabilistic approach to the Boltzmann equation. The underlying idea is very close to what one would write today using a Poisson random measure. Thus, the probabilistic Boltzmann equation may be viewed as a natural ancestor of a McKean--Vlasov equation with jumps.\\\\
In 1992, Graham \cite{Graham1992} explicitly considers a nonlinear McKean--Vlasov stochastic differential equation with non-compensated jumps, constructed from Poisson point processes. He establishes, in particular, existence and uniqueness results, as well as a propagation of chaos result. Schematically, the equation has the form \[ dX_t = b\bigl(X_t,\mathcal{L}(X_t)\bigr)\,dt + \sigma\bigl(X_t,\mathcal{L}(X_t)\bigr)\,dB_t + \int_E \gamma\bigl(X_{t^-},\mathcal{L}(X_t),z\bigr) \,N(dt,dz), \] where \(N(dt,dz)\) denotes a Poisson random measure. One may also consider a formulation involving the compensated Poisson random measure \[ \widetilde{N}(dt,dz) = N(dt,dz)-\nu(dz)\,dt, \] which leads to the jump term \[ \int_E \gamma\bigl(X_{t^-},\mathcal{L}(X_t),z\bigr) \,\widetilde{N}(dt,dz). \] Thus, Graham's framework combines two essential ingredients: dependence on the law of the solution through \(\mathcal{L}(X_t), \) and discontinuous stochastic dynamics through the Poisson jump term.\\\\
One interesting particular case of the McKean-Vlasov processes was found in 2024 by Barbu, Rehmeier, and
R\"ockner \cite{Barbu2024} to describe non-linear operators such as the p-Laplacian. Indeed, rewriting $\partial_t u = \Delta_p u$
as a nonlinear Fokker--Planck equation
and, applying the superposition principle of
Trevisan \cite{Trevisan2016},
they proved the existence of a new class of McKean-Vlasov process, so-called \emph{$p$-Brownian motion}, defined as
\begin{equation*}
\dd X_t
= \nabla(|\nabla u|^{p-2})(t,X_t)\,\dd t
+\sqrt{2}\,|\nabla u(t,X_t)|^{\frac{p-2}{2}}\,\dd W_t,
\end{equation*}
whose generator $A$ combines a $p$-gradient drift and a $p$-Laplacian diffusion operator, namely:
\begin{equation*}
Af(x)
= \nabla(|\nabla u|^{p-2})\cdot\nabla f
+|\nabla u|^{p-2}\,\Delta f.
\end{equation*}
A similar method as later been used by Barbu and R\"ockner to describe another class of diffusion operator involving a fractional term by a McKean-Vlasov flight process, see \cite{BarbuRockner2024}.\\\\
Now the natural question is:
what is the process associated to an operator of the type $A=\Delta_p+\Delta_p^s$, composed of both the $p$-Laplacian $\Delta_p$ and the fractional $p$-Laplacian $\Delta_p^s$ defined as:
\begin{equation*}
\Delta_p^s u(x)
= \mathrm{P.V.}\!\int_{\R^N}
\frac{|u(y)-u(x)|^{p-2}(u(y)-u(x))}
{|x-y|^{N+sp}}\,\dd y,
\quad s\in(0,1),\ p\geq 1.
\end{equation*}
In the present paper, we show that $A$ is the infinitesimal generator in the probabilistic sense of a class of McKean--Vlasov flight process described by
\begin{equation*}
\dd X_t
= \nabla(|\nabla u|^{p-2})(t,X_t)\,\dd t
+\sqrt{2}\,|\nabla u(t,X_t)|^{\frac{p-2}{2}}\,\dd W_t
+\int_{\R^N\setminus\{0\}}z\,\widetilde{N}(\dd t,\dd z),
\end{equation*}
with compensator $\nu_{t,x}(\dd z)
= |u(t,x)-u(t,x+z)|^{p-2}|z|^{-(N+sp)}\dd z$.\\\\
Here $\mathcal{L}_{X_t}=u(t,\cdot)\dd x$
where $u$ is the solution of
$$\partial_t u=\Delta_p u+\Delta_p^s u,$$
constructed via the nonlinear Hille--Yosida theorem
of Br\'ezis \cite{Brezis1973}
and the superposition principle of
R\"ockner--Xie--Zhang \cite{RXZ2020}.\\\

\noindent
The paper is organized as follows :\\\\
In \textbf{Section 2}, we recall some fundamental knowledge in Analysis and Probability useful for the comprehension of the work, and introduce the non-linear Levy measure which will participate in the definition of the MacKean-Vlasov flight process associated to the operator $A:=\Delta_p +\Delta_p^s$.\\\\
Next, in \textbf{Section 3}, under some specific assumptions, we prove the existence of a weak solution in $L^2(\R^N)$ for the PDE $\partial_tu=Au$, along with some additional properties, such as positivity and mass conservation. The proof of the existence is based on the application of a general existence theorem introduced by Brezis in 1973 for nonlinear operators in Hilbert spaces. \\\\
\textbf{Section 4} is dedicated to the study of a non-linear Fokker-Planck equation obtained from the PDE $\partial_tu=Au$ using the derivation by-part formula. Provided that $p\ge4$, we prove the existence of a weak solution-measure for this FPKE using a new weighted second-order regularity result presented in \textbf{Appendix B}.\\\\
In \textbf{Section 5}, after formulating the Martingale Problem associated to our FPKE, we prove the existence of a Martingale-solution using a new version of a superposition principle for non-local Fokker-Planck-Kolmogorov operators established by Röckner, Xie and Zhang in 2020. The proof of this result uses the global boundedness of the support of the solution of our PDE, presented in \textbf{Appendix A}.\\\\ 
Finally, in \textbf{Section 6}, we present the nonlinear SDE associated to A, and prove the existence of a weak solution whose law has for density the solution of our PDE, applying a general existence result established by Shiryaev in 1987 for general SDEs with jumps.
%=================================================================
\section{Notation and function spaces}
%=================================================================
 
\noindent
Throughout this paper, $N\geq 1$, $p\geq 4$,
$s\in(0,1)$ with $sp<2$ and $sp>1$,
$T>0$ is a finite time ,
and $\Omega\subset\R^N$ is an open set.
We write $\Omega_T:=\Omega\times(0,T)$.\\
\subsection{Analytic framework}
\noindent
\textbf{The $p$-Laplacian $\Delta_p$.} The nonlinear
second-order operator operator $\Delta_p$ defined by:
\begin{equation}
\Delta_p u
:= \operatorname{div}(|\nabla u|^{p-2}\nabla u),\quad p>1 
\label{eq:plap}
\end{equation}
is the \emph{$p$-Laplacian}, studied extensively since the works of
DiBenedetto \cite{DiBenedetto1993} and Lions \cite{Lions1969}.
The operator $\Delta_p^s$ is the \emph{fractional $p$-Laplacian},
a nonlocal operator of order $2s$ introduced by
Di Nezza, Palatucci and Valdinoci \cite{DiNezza2012} and studied in the
evolution setting by Del Pezzo and Rossi \cite{DelPezzo2016}.\\\\
\medskip
\noindent
\textbf{The fractional $p$-Laplacian $\Delta_p^s$.}
For $s\in(0,1)$ and $p>1$,
the fractional $p$-Laplacian is defined by:
\begin{equation}
\Delta_p^s u(x)
:= \mathrm{P.V.}\int_{\R^N}
\frac{|u(y)-u(x)|^{p-2}(u(y)-u(x))}
{|x-y|^{N+sp}}\,\dd y,
\quad x\in\R^N,\quad p>1,
\label{eq:fracplap}
\end{equation}
see Maz\'on--Rossi--Toledo \cite{Mazon2016}.\\\\
\noindent
\textbf{Banach space.}
A \emph{Banach space} is a complete normed vector space
\cite{Brezis2011}.
The spaces $L^p(\R^N)$ ($1\leq p\leq\infty$),
$W^{1,p}(\R^N)$, $W^{s,p}(\R^N)$
are all Banach spaces.\\

\medskip
\noindent
\textbf{Dual space.}
For a Banach space $X$, the \emph{dual space} $X^*$
is the space of all bounded linear functionals
$f:X\to\R$, equipped with the norm
$\|f\|_{X^*}:=\sup_{\|u\|_X\leq 1}|f(u)|$
\cite{Brezis2011}.
The duality pairing is denoted
${}_{X^*}\langle f,u\rangle_X := f(u)$.
For $1<p<\infty$, one has \cite{Brezis2011}:
\begin{equation*}
\bigl(L^p(\R^N)\bigr)^* = L^{p'}(\R^N),
\quad\frac{1}{p}+\frac{1}{p'}=1,
\end{equation*}
and the dual of $W^{1,p}(\R^N)$
is denoted $W^{-1,p'}(\R^N)$.\\
 
\noindent
We define:
\begin{equation}\label{eq:V}
V := W^{1,p}(\R^N)\cap W^{s,p}(\R^N)\cap L^2(\R^N),
\quad
\|u\|_V := \|u\|_{L^2}+\|\nabla u\|_{L^p}+[u]_{W^{s,p}},
\end{equation}
and its dual $V^{*}$, with the continuous embeddings:
\begin{equation*}
V \hookrightarrow H = L^2(\R^N) \hookrightarrow V^{*},
\end{equation*}
forming a \emph{Gelfand triple} \cite{Brezis2011}. The space $V$, equipped with $\|\cdot\|_V$, is a Banach space, as the intersection of the Banach spaces $W^{1,p}(\mathbb{R}^N)$ , $W^{s,p}(\mathbb{R}^N)$ , and $L^2(\mathbb{R}^N)$.

\medskip
\noindent
\textbf{Reflexivity.}
A Banach space $X$ is \emph{reflexive} if the canonical
embedding $X\hookrightarrow X^{**}$ is surjective
\cite{Brezis2011}.
For $1<p<\infty$, $L^p(\R^N)$, $W^{1,p}(\R^N)$,
and $W^{s,p}(\R^N)$ are reflexive
\cite{Brezis2011,DiNezza2012}.
The space $V$ defined in \eqref{eq:V} below
is reflexive as a closed subspace of a reflexive space
\cite{Brezis2011}.
Reflexivity of $V$ is essential for applying
\cite{Barbu2010}, Theorem~2.4 and Corollary~2.2
in the proof of maximal monotonicity of $A$.

\medskip
\noindent
\textbf{Sobolev embeddings.}
For $1<p<\infty$ and $0<s<1$
\cite{Brezis2011,DiNezza2012}:
\begin{align*}
W^{1,p}(\R^N)&\hookrightarrow L^{p^*}(\R^N),
\quad p^* = \frac{Np}{N-p},
\quad\text{if }p<N,\\
W^{s,p}(\R^N)&\hookrightarrow L^{p_s^*}(\R^N),
\quad p_s^* = \frac{Np}{N-sp},
\quad\text{if }sp<N,\\
W^{s,p}(\R^N)&\hookrightarrow W^{s',p}(\R^N),
\quad 0<s'<s<1.
\end{align*}

\noindent
\textbf{Lebesgue space $L^q$.} For $1\leq q\leq\infty$, the Lebesgue space
$L^q(\R^N)$ is defined by
\[L^q(\R^N):= \{u:\R^N\rightarrow\R\,|\,\|u\|_{L^q(\R^N)}<\infty\},\]
where
\[
\|u\|_{L^q(\R^N)}
:=\left\{\begin{aligned}&\left(\int_{\R^N}|u(x)|^q\,\dd x\right)^{1/q},
&&1\leq q<\infty,\\
&\underset{x\in\R^N}{\mbox{ess\,sup}}\,|u(x)|,&&q=\infty\end{aligned}\right. 
\]
We use the following spaces and their local variants:
\begin{align*}
&L^q(\R^N),\quad
L^q_{\mathrm{loc}}(\R^N),\quad
L^q_{\mathrm{loc}}(\Omega_T),\quad
L^\infty(\R^N),\quad
L^\infty_{\mathrm{loc}}(\R^N).
\end{align*}
The space $H = L^2(\R^N)$ is a Hilbert space
with inner product
$\langle u,v\rangle_H = \int_{\R^N}uv\,\dd x$.\\\\
\textbf{Sobolev space $W^{1,p}$.} The Sobolev space $W^{1,p}(\R^N)$ defined by:
\[
W^{1,p}(\R^N)
:= \{u\in L^p(\R^N)\mid \nabla u\in L^p(\R^N)\},
\]
is endowed with the norm
\[\|u\|_{W^{1,p}(\R^N)}
:= \|u\|_{L^p}+\|\nabla u\|_{L^p}.\]
The space $W^{1,\infty}(\R^N)$ consists of
Lipschitz functions with bounded gradient.
We also use $W^{1,1}_{\mathrm{loc}}(\R^N)$
and $W^{1,p}_{\mathrm{loc}}(\Omega)$.\\

\noindent
\textbf{Fractional Sobolev space $W^{s,p}$.}  For $s\in(0,1)$ and $1\leq p<\infty$,
the fractional Sobolev space $W^{s,p}(\R^N)$, also called Gagliardo--Slobodecki\u{\i} space, is defined by:
\[
W^{s,p}(\R^N)
:= \left\{u\in L^p(\R^N)
\;\middle|\;
[u]_{W^{s,p}(\R^N)}<\infty\right\},
\]
where the Gagliardo seminorm is:
\[
[u]_{W^{s,p}(\R^N)}^p
:= \iint_{\R^N\times\R^N}
\frac{|u(x)-u(y)|^p}{|x-y|^{N+sp}}
\,\dd x\,\dd y,
\]
and the norm is
$\|u\|_{W^{s,p}(\R^N)}
:= \|u\|_{L^p}+[u]_{W^{s,p}}$.
We also use $W^{s,p}_{\mathrm{loc}}(\Omega)$.\\\\
\textbf{Bochner space.} For a Banach space $X$ and $1\leq q\leq\infty$,
the Bochner space $L^q(0,T;X)$ consists of
(equivalence classes of) strongly measurable
functions $u:(0,T)\to X$ with:
\begin{equation*}
\|u\|_{L^q(0,T;X)}
:=\left\{\begin{aligned}&\left(\int_0^T\|u(t)\|_X^q\,\dd t\right)^{1/q}
<\infty, &&1\leq q<\infty,\\
&\underset{t\in(0,T)}{\mbox{ess\,sup}}
\,\|u(t)\|_X<\infty.\end{aligned}\right. 
\end{equation*}
The specific Bochner spaces used in this paper are:
\begin{align*}
&\bullet\,\,L^\infty(0,T;L^2(\R^N))
:= \bigl\{u:(0,T)\to L^2(\R^N)
\mid \operatornamewithlimits{ess\,sup}_{t\in(0,T)}
\|u(t)\|_{L^2(\R^N)}<\infty\bigr\},\\
&\bullet\,\,L^\infty(0,T;L^\infty(\R^N))
:= \bigl\{u:(0,T)\to L^\infty(\R^N)
\mid \operatornamewithlimits{ess\,sup}_{t\in(0,T)}
\|u(t)\|_{L^\infty(\R^N)}<\infty\bigr\},\\
&\bullet\,\,L^p(0,T;W^{1,p}(\R^N))
:= \bigl\{u:(0,T)\to W^{1,p}(\R^N)
\mid \int_0^T\|u(t)\|_{W^{1,p}(\R^N)}^p\,\dd t
<\infty\bigr\},\\
&\bullet\,\,L^p(0,T;W^{s,p}(\R^N))
:= \bigl\{u:(0,T)\to W^{s,p}(\R^N)
\mid \int_0^T\|u(t)\|_{W^{s,p}(\R^N)}^p\,\dd t
<\infty\bigr\}.
\end{align*}
The local versions are defined accordingly:
\begin{align*}
&\bullet\,\,L^p_{\mathrm{loc}}(0,\infty;W^{1,p}(\R^N))
:= \bigl\{u \mid u\in L^p(0,T;W^{1,p}(\R^N))
\;\forall\,T>0\bigr\},\\
&\bullet\,\,L^p_{\mathrm{loc}}(0,\infty;W^{s,p}(\R^N))
:= \bigl\{u \mid u\in L^p(0,T;W^{s,p}(\R^N))
\;\forall\,T>0\bigr\},\\
&\bullet\,\,C_{\mathrm{loc}}(0,T;L^2_{\mathrm{loc}}(\Omega))
:= \bigl\{u \mid
u\in C^0([t_1,t_2];L^2(K))
\;\forall\,[t_1,t_2]\subset(0,T),\,
\forall\,K\Subset\Omega\bigr\}.
\end{align*}
\textbf{Sobolev-Bochner spaces.} The following Sobolev-Bochner spaces will also be useful:
\begin{align*}
&\bullet\,\,W^{1,\infty}(0,T;L^2(\R^N))
:= \bigl\{u\in L^\infty(0,T;L^2(\R^N))
\mid \tfrac{du}{dt}\in L^\infty(0,T;L^2(\R^N))
\bigr\},\\
&\bullet\,\,C^0([0,T];L^2(\R^N))
:= \bigl\{u:[0,T]\to L^2(\R^N)
\mid t\mapsto u(t)\text{ is continuous
in }\|\cdot\|_{L^2(\R^N)}\bigr\}
\end{align*}
\textbf{Operator Notions and Properties.} Let $H$ be a real Hilbert space, $X$ and $V$ be real Banach spaces, and $V^*$ denote the dual of $V$. We recall the following fundamental definitions \cite{Barbu2010,Brezis1973}:
\begin{itemize}
    \item For an operator $A : X \to X^*$, its \emph{domain} is defined as
    \begin{equation*}
        D(A) := \{u \in X : Au \in X^*, \, \|Au\|_{X^*} < \infty\}.
    \end{equation*}
    In this paper, we consider $A := -\Delta_p - \Delta_p^s$ with $D(A) = D(-\Delta_p) \cap D(-\Delta_p^s) \subset L^2(\mathbb{R}^N)$.

    \item An operator $A : D(A) \subset H \to H$ is said to be \emph{monotone} if
    \begin{equation*}
        \langle Au - Av, u - v \rangle_H \geq 0 \quad \forall \, u, v \in D(A).
    \end{equation*}

    \item A multivalued operator $A : H \to 2^H$ is said to be \emph{maximal monotone} if its graph
    \[
        \text{gph}(A) = \{(x, y) \in H \times H \mid y \in Ax\}
    \]
    admits no proper monotone extension in $H \times H$. Equivalently, by Minty's Theorem,
    \begin{equation*}
        R(I + A) = H,
    \end{equation*}
    where $I$ denotes the identity operator on $H$.

    \item An operator $A_V : V \to V^{*}$ is said to be \emph{hemicontinuous} if, for all $u, v, w \in V$, the real-valued mapping
    \begin{equation*}
        t \mapsto {}_{V^{*}}\langle A_V(u + tv), w \rangle_V
    \end{equation*}
    is continuous at $t = 0$.

    \item For a convex, lower semicontinuous functional $\Phi : H \to (-\infty, +\infty]$, the \emph{subdifferential} $\partial\Phi : H \to 2^H$ is defined by
    \begin{equation*}
        \partial\Phi(u) := \{w \in H : \Phi(v) \geq \Phi(u) + \langle w, v - u \rangle_H, \; \forall \, v \in H\}.
    \end{equation*}
    The subdifferential operator $\partial\Phi$ is always maximal monotone.
\end{itemize}

\subsection{Probabilistic framework}
\noindent\textbf{Stochastic Foundations.} Let $(\Omega, \mathcal{F}, P)$ be a probability space:

\begin{itemize}
    \item An event $A \in \mathcal{F}$ holds \textit{almost surely} if $P(A) = 1$ (i.e., it occurs with probability $1$).
    
    \item A sequence of random variables $(X_n)_{n \ge 1}$ converges \textit{almost surely} to a random variable $X$ (denoted $X_n \xrightarrow{\text{a.s.}} X$) if:
    \[
    P\left( \left\{ \omega \in \Omega : \lim_{n \to \infty} X_n(\omega) = X(\omega) \right\} \right) = 1
    \]
    
    \item A sequence $(X_n)_{n \ge 1}$ is \textit{i.i.d.} if all variables share the same probability distribution (identically distributed) and, for any finite subset, their joint distribution is the product of their marginal distributions (independent).
\end{itemize}\par\bigskip\noindent
\noindent\textbf{Stochastic process.} Let $(\Omega, \mathcal{F}, \mathbb{P})$ be a probability space, $T \subset [0, +\infty)$ an index set, and $(E, \mathcal{E})$ a measurable state space (typically $\mathbb{R}^d$ equipped with its Borel $\sigma$-algebra). A \emph{stochastic process} is a family of $E$-valued random variables \((X_t)_{t \in T}\) defined on $(\Omega, \mathcal{F}, \mathbb{P})$. 

When $T$ is equipped with a $\sigma$-algebra $\mathcal{B}(T)$ (such as the Borel $\sigma$-algebra), the process is said to be \emph{measurable} (or jointly measurable) if the mapping
\[
X : T \times \Omega \longrightarrow E, \qquad (t, \omega) \longmapsto X_t(\omega)
\]
is $(\mathcal{B}(T) \otimes \mathcal{F}, \mathcal{E})$-measurable.

For each fixed $t \in T$, the map $\omega \mapsto X_t(\omega)$ is a random variable, while for each fixed $\omega \in \Omega$, the mapping
\[
t \longmapsto X_t(\omega)
\]
is called a \emph{sample path} (or \emph{trajectory}) of the stochastic process.\\\\
\noindent\textbf{Poisson Process.} A Poisson process with rate (or intensity) parameter $\lambda > 0$ is a continuous-time counting process $\{N(t), t \ge 0\}$ taking values in $\mathbb{N}_0 = \{0, 1, 2, \dots\}$, representing the total number of events that have occurred up to time $t$. It is formally defined by the following properties:

\begin{enumerate}
    \item $N(0) = 0$ almost surely.
    
    \item For any sequence of times $0 \le t_1 < t_2 < \dots < t_k$, the random variables 
    \[
    N(t_2) - N(t_1),\, N(t_3) - N(t_2),\, \dots,\, N(t_k) - N(t_{k-1})
    \]
    are mutually independent.
    
    \item For all $0 \le s < t$, the increment $N(t) - N(s)$ follows a Poisson distribution with parameter $\lambda(t - s)$:
    \[
    P(N(t) - N(s) = k) = \frac{(\lambda (t - s))^k}{k!} e^{-\lambda (t - s)}, \quad \forall k \in \mathbb{N}_0
    \]
\end{enumerate}

\noindent\textbf{Poisson measure.} Let $(E, \mathcal{E})$ be a measurable space and $\mu$ a $\sigma$-finite intensity measure on $E$. A Poisson  measure $N$ on $E$ is a random counting measure represented as a countable sum of Dirac masses:
\[
N(dt, dx) = \sum_{s > 0} \delta_{(s, \Delta X_s)}(dt, dx)
\]
where $(s, \Delta X_s)$ are random points representing jump times $s$ and continuous jump sizes $\Delta X_s = X_s - X_{s^-} \in E \setminus \{0\}$. It satisfies:

\begin{enumerate}
    \item For pairwise disjoint sets $A_1, \dots, A_k \in \mathcal{E}$, $N(A_1), \dots, N(A_k)$ are independent.
    \item For any $A \in \mathcal{E}$, $N(A) \sim \mathcal{P}(\mu(A))$:
    \[
    P(N(A) = k) = \frac{\mu(A)^k}{k!} e^{-\mu(A)}, \quad \forall k \in \mathbb{N}_0
    \]
\end{enumerate}

\noindent The compensated measure $\tilde{N}(dt, dx) = N(dt, dx) - \mu(dt, dx) \, dt$ forms a martingale, isolating the continuous-time purely discontinuous jump behavior.\\\

\noindent\textbf{McKean--Vlasov process.} Let $(\Omega, \mathcal{F}, (\mathcal{F}_t)_{t \ge 0}, \mathbb{P})$ be a filtered probability space satisfying the usual conditions, carrying a standard $d'$-dimensional Brownian motion $(W_t)_{t \ge 0}$. 

A continuous stochastic process $(X_t)_{t \ge 0}$ taking values in $\mathbb{R}^d$ is called a McKean--Vlasov process (or mean-field diffusion) if it satisfies a distribution-dependent stochastic differential equation of the form:
\[
\begin{cases}
dX_t = b\big(t, X_t, \mathcal{L}(X_t)\big)\,dt + \sigma\big(t, X_t, \mathcal{L}(X_t)\big)\,dW_t, \quad t \ge 0, \\
X_0 = \xi,
\end{cases}
\]
where:
\begin{itemize}
    \item $\mathcal{L}(X_t)\in \mathcal{P}(\mathbb{R}^d)$ denotes the marginal probability distribution (the law) of the random variable $X_t$ at time $t$, where $\mathcal{P}(\mathbb{R}^d)$ is the space of probability measures on $\mathbb{R}^d$.
    \item $b : [0, +\infty) \times \mathbb{R}^d \times \mathcal{P}(\mathbb{R}^d) \to \mathbb{R}^d$ is the drift coefficient.
    \item $\sigma : [0, +\infty) \times \mathbb{R}^d \times \mathcal{P}(\mathbb{R}^d) \to \mathbb{R}^{d \times d'}$ is the diffusion matrix.
    \item $\xi$ is an $\mathcal{F}_0$-measurable random variable representing the initial condition.
\end{itemize}\par\bigskip\noindent
\textbf{L\'evy measure.} A $\sigma$-finite Borel measure $\nu$ on $\R^N\setminus\{0\}$
is a \emph{L\'evy measure} if
\begin{equation}
\int_{\R^N\setminus\{0\}}(1\wedge|y|^2)\,\nu(\dd y) < \infty.
\label{eq:levy_def}
\end{equation}
\subsection{Tool results}
\begin{lemma}[Gagliardo--Nirenberg]
\label{lem:GN}
Let $1\le\sigma,p,q<\infty$ and $\vartheta\in(0,1)$ such that
$-N/q=\vartheta(1-N/p)-(1-\vartheta)N/\sigma$.
Then there exists $\gamma=\gamma(N,p,q,\sigma)$ such that
for any $v\in L^\sigma(\R^N)\cap W^{1,p}(\R^N)$:
\[
\int_{\R^N}|v|^q\,\dd x
\le\gamma
\Bigl(\int_{\R^N}|v|^\sigma\,\dd x\Bigr)^{\frac{(1-\vartheta)q}{\sigma}}
\Bigl(\int_{\R^N}|Dv|^p\,\dd x\Bigr)^{\frac{\vartheta q}{p}}.
\]
\end{lemma}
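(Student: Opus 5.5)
The plan is to obtain the inequality by interpolating $L^q$ between $L^\sigma$ and $L^{p^*}$, and then applying the Sobolev embedding recalled in Section 2. The stated exponent relation is exactly the scaling condition
\[
\frac1q=\frac{1-\vartheta}{\sigma}+\frac{\vartheta}{p^*},\qquad \frac1{p^*}=\frac1p-\frac1N .
\]
Indeed, $-N/q=\vartheta(1-N/p)-(1-\vartheta)N/\sigma$ divided by $-N$ gives $1/q=\vartheta(1/p-1/N)+(1-\vartheta)/\sigma$. So in the main case $p<N$ I set $r:=p^*$.

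First I apply Hölder's inequality with the exponents $\frac{\sigma}{(1-\vartheta)q}$ and $\frac{r}{\vartheta q}$. These are conjugate precisely because of the relation above. Writing $|v|^q=|v|^{(1-\vartheta)q}|v|^{\vartheta q}$, this yields
\[
\int|v|^q\le \Bigl(\int|v|^\sigma\Bigr)^{\frac{(1-\vartheta)q}{\sigma}}\|v\|_{L^{r}}^{\vartheta q}.
\]
Second, I use the homogeneous Sobolev (Gagliardo–Nirenberg–Sobolev) inequality $\|v\|_{L^{p^*}}\le C(N,p)\|Dv\|_{L^p}$. This holds for $v\in W^{1,p}$, and more generally for $v\in L^\sigma$ with $Dv\in L^p$, by approximation with mollified truncations that vanish at infinity. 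Inserting it gives the claim with $\gamma=C(N,p)^{\vartheta q}$. The right-hand side is finite since $v\in L^\sigma\cap W^{1,p}$, and $v\in L^{p^*}$ follows from the embedding stated in Section 2.

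The main obstacle is the borderline regime $p\ge N$, which is relevant here since $p\ge4$. In that regime $p^*$ is not available, and the homogeneous Sobolev inequality must be replaced by a scale-invariant estimate. I would then prove the inequality directly, following Nirenberg's route.

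First, I establish the case $p=1$ via the Gagliardo $L^{N/(N-1)}$ inequality, applied to the function $|v|^{\alpha}$ for a suitable $\alpha>1$. Using $|D|v|^\alpha|=\alpha|v|^{\alpha-1}|Dv|$ and Hölder's inequality gives
\[
\|v\|_{L^{\alpha N/(N-1)}}^{\alpha}\le \alpha\,\|v\|^{\alpha-1}_{L^{(\alpha-1)p'}}\,\|Dv\|_{L^p}.
\]
I then choose $\alpha$ so that $(\alpha-1)p'$ and $\alpha N/(N-1)$ straddle $q$ together with $\sigma$. Finally, I interpolate by Hölder once more, and verify that the exponents collapse to the same scaling relation. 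The scaling relation forces the powers of $\|v\|_{L^\sigma}$ and $\|Dv\|_{L^p}$ to be exactly $(1-\vartheta)q/\sigma$ and $\vartheta q/p$, because both sides are invariant under $v\mapsto \lambda v(\mu\,\cdot)$.

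The condition $\vartheta\in(0,1)$ guarantees that the intermediate exponents are admissible, and checking this bookkeeping is the delicate step.

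For $N=1$ the estimate is elementary. From $|v(x)|^{a}\le a\int|v|^{a-1}|v'|$, the same Hölder chain applies.
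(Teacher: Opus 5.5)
The paper gives no proof of this lemma. It is stated without proof among the tool results as the classical Gagliardo--Nirenberg inequality, and it is used only with $q=p$, $\sigma=2$ in the De Giorgi iteration of the Appendix. So your proposal supplies an argument the paper takes for granted; it is the standard argument, and it is correct.

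For $p<N$, Hölder with the conjugate exponents $\sigma/((1-\vartheta)q)$ and $p^*/(\vartheta q)$, followed by Sobolev, gives the bound with $\gamma=C(N,p)^{\vartheta q}$. This also covers the degenerate case $\sigma=q=p^*$, where the relation does not determine $\vartheta$. You correctly see that this is not enough when $p\ge N$. That is exactly the regime the paper needs for small $N$, since $p\ge4$. Nirenberg's device of applying the $W^{1,1}$ inequality to $|v|^\alpha$ is the right replacement, and your displayed estimate holds as written because Gagliardo's constant can be taken $\le 1$.

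The bookkeeping you leave open closes as follows.
\begin{itemize}
\item \emph{Choice of $\alpha$.} For $p\ge N\ge 2$ the relation forces $q>\sigma$, and $p'\le N/(N-1)$. Take any $\alpha\ge\max\{1+\sigma/p',\,q(N-1)/N\}$. Then $\sigma\le(\alpha-1)p'<\alpha N/(N-1)$ and $\sigma<q\le\alpha N/(N-1)$.
\item \emph{Interpolation and absorption.} Interpolate $\|v\|_{L^{(\alpha-1)p'}}$ between $L^\sigma$ and $L^{\alpha N/(N-1)}$. The resulting power of $\|v\|_{L^{\alpha N/(N-1)}}$ on the right is at most $\alpha-1<\alpha$, so it can be absorbed. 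This gives the inequality at the exponent $\alpha N/(N-1)$.
\item \emph{Reaching $q$.} A second Hölder interpolation between $L^\sigma$ and $L^{\alpha N/(N-1)}$ brings the inequality down to $q$. Your scaling argument then identifies the exponents as $(1-\vartheta,\vartheta)$, since $1-N/p+N/\sigma>0$ in this regime.
\item \emph{Regularity.} Absorption needs $\|v\|_{L^{\alpha N/(N-1)}}<\infty$, and the chain rule for $|v|^\alpha$ needs some regularity. So run the argument for $v\in C^\infty_c(\R^N)$. Then reach general $v\in L^\sigma\cap W^{1,p}$ by cutoff and mollification (convergence in $L^\sigma$, and of $Dv$ in $L^p$, along an a.e.\ convergent subsequence), using Fatou's lemma on the left-hand side.
\item \emph{The case $N=1$.} Choosing $\alpha=1+\sigma/p'$ in your pointwise bound gives $\|v\|_\infty^{\alpha}\le\alpha\|v\|_{L^\sigma}^{\sigma/p'}\|v'\|_{L^p}$. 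Then $\|v\|_{L^q}\le\|v\|_{L^\sigma}^{\sigma/q}\|v\|_\infty^{1-\sigma/q}$ finishes.
\end{itemize}
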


\begin{lemma}[De~Giorgi iteration, see \cite{BRVV2017}]
\label{lem:DG}
Let $\alpha>0$ and let $(\mathtt{k}_i)_{i\in\mathbb{N}}$
satisfy $\mathtt{k}_{i+1}\le C B^i\mathtt{k}_i^{1+\alpha}$
with $C,B>1$.
If $\mathtt{k}_0\le C^{-1/\alpha}B^{-1/\alpha^2}$, then
$\mathtt{k}_i\le B^{-i/\alpha}\mathtt{k}_0$
and in particular $\lim_{i\to\infty}\mathtt{k}_i=0$.
\end{lemma}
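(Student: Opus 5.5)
The statement to prove is Lemma~\ref{lem:DG}: the De~Giorgi iteration lemma. The plan is a direct induction. Set $\beta := B^{-1/\alpha} \in (0,1)$, and aim to show that $\mathtt{k}_i \le \beta^{i}\mathtt{k}_0$ for every $i\in\mathbb{N}$.

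\textbf{Base case.} The case $i=0$ is trivial.

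\textbf{Inductive step.} Assume $\mathtt{k}_i\le B^{-i/\alpha}\mathtt{k}_0$. (Implicitly $\mathtt{k}_i\ge 0$, as in every application, so that the power $1+\alpha$ is monotone.) The recursion gives
\[
\mathtt{k}_{i+1}\le C B^i \mathtt{k}_i^{1+\alpha}
\le C B^i B^{-i(1+\alpha)/\alpha}\mathtt{k}_0^{1+\alpha}
= C B^{-i/\alpha}\mathtt{k}_0^{\alpha}\,\mathtt{k}_0 .
\]
It then suffices to show $C\mathtt{k}_0^\alpha\le B^{-1/\alpha}$, since this yields $\mathtt{k}_{i+1}\le B^{-(i+1)/\alpha}\mathtt{k}_0$.

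\textbf{Use of the smallness hypothesis.} Raising $\mathtt{k}_0\le C^{-1/\alpha}B^{-1/\alpha^2}$ to the power $\alpha$ gives
\[
\mathtt{k}_0^\alpha\le C^{-1}B^{-1/\alpha},
\]
which is exactly $C\mathtt{k}_0^\alpha\le B^{-1/\alpha}$. This closes the induction.

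\textbf{Conclusion.} Since $B>1$ and $\alpha>0$, we have $B^{-i/\alpha}\to0$, so $\mathtt{k}_i\to0$.

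The only delicate point is the exponent bookkeeping in the inductive step. Specifically, one has to check that $i - i(1+\alpha)/\alpha = -i/\alpha$, so that the leftover factor is exactly $B^{-i/\alpha}$. The smallness condition on $\mathtt{k}_0$ is calibrated precisely to absorb the extra factor $C\,\mathtt{k}_0^\alpha$ into one more power of $B^{-1/\alpha}$. No earlier result of the paper is needed.
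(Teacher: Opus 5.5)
Your induction is correct. The exponent bookkeeping $i-i(1+\alpha)/\alpha=-i/\alpha$ and the bound $\mathtt{k}_0^\alpha\le C^{-1}B^{-1/\alpha}$ close the step, and you are right that $\mathtt{k}_i\ge 0$ is implicitly needed, both for the power $1+\alpha$ and for squeezing $\mathtt{k}_i$ to $0$. The paper gives no proof of its own and only cites the lemma; your argument is the standard inductive proof found there and in DiBenedetto's monograph.
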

\noindent In the next, we consider the particular Borel measure
\begin{equation}
\nu_u(\dd y)
:= \frac{|u(x)-u(x+y)|^{p-2}}{|y|^{N+sp}}\,\dd y,
\qquad y\in\R^N\setminus\{0\},
\label{eq:nu_u}
\end{equation}
where $u$ is a measurable function.
\begin{lemma}[$\nu_u$ is a L\'evy measure]
\label{thm:levy}
If $p\ge2$, $sp<2$ and $u\in L^{\infty}(\R^N)$, the measure $\nu_u$ defined in
\eqref{eq:nu_u} is a Lévy measure.
More precisely:
\begin{equation}
\int_{\R^N\setminus\{0\}}(1\wedge|y|^2)\,\nu_u(\dd y)
\leq
\frac{2(2M)^{p-2}\omega_N}{sp(2-sp)},
\label{eq:levy_bound}
\end{equation}
 where $M:=\|u\|_{L^\infty}$ and $\omega_N := |\mathbb{S}^{N-1}|
= \frac{2\pi^{N/2}}{\Gamma(N/2)}$ is the surface area of the unit sphere $\mathbb{S}^{N-1}$
in $\R^N$, with $\displaystyle\Gamma(t):=\int_0^\infty s^{t-1}e^{-s}\,\dd s$
the Euler Gamma function
\end{lemma}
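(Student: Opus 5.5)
The plan is a pointwise bound on the density of $\nu_u$, followed by an explicit radial integration that reproduces the constant in \eqref{eq:levy_bound}.

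\textbf{Step 1 (representative).} Since $u\in L^\infty(\R^N)$ is only an equivalence class, I first fix a representative with $|u(z)|\le M$ for \emph{every} $z\in\R^N$. This is possible by redefining $u$ on a null set, and it does not change $\nu_u$ as a measure, since that depends on $u(x+\cdot)$ only a.e. in $y$. For $p=2$ I use the convention $0^0=1$, so that $\nu_u(\dd y)=|y|^{-N-sp}\dd y$.

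\textbf{Step 2 (pointwise bound).} For every $x$ and every $y\neq0$ we have $|u(x)-u(x+y)|\le 2M$. Because $p-2\ge 0$, the map $t\mapsto t^{p-2}$ is nondecreasing on $[0,\infty)$, so
\[
\frac{|u(x)-u(x+y)|^{p-2}}{|y|^{N+sp}}\le \frac{(2M)^{p-2}}{|y|^{N+sp}} .
\]
The density of $\nu_u$ is nonnegative and measurable. By this bound it is also integrable on every set $\{|y|>\varepsilon\}$, since $sp>0$. Hence $\nu_u$ is a $\sigma$-finite Borel measure on $\R^N\setminus\{0\}$.

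\textbf{Step 3 (radial integration).} Write $\int_{\R^N\setminus\{0\}}(1\wedge|y|^2)\,|y|^{-N-sp}\,\dd y$ in polar coordinates $y=r\theta$, $\dd y=r^{N-1}\dd r\,\dd\theta$. Splitting at $r=1$ gives
\[
\omega_N\Bigl(\int_0^1 r^{1-sp}\,\dd r+\int_1^\infty r^{-1-sp}\,\dd r\Bigr)
=\omega_N\Bigl(\frac{1}{2-sp}+\frac{1}{sp}\Bigr)
=\frac{2\,\omega_N}{sp(2-sp)} .
\]
The first integral converges exactly because $sp<2$. The second converges because $sp>0$.

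Multiplying by $(2M)^{p-2}$ yields \eqref{eq:levy_bound}, and in particular the finiteness condition \eqref{eq:levy_def}.

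\textbf{Main obstacle.} The only delicate point is the measure-theoretic one in Step 1: making the bound hold for every $x$ rather than almost every $x$, which the choice of representative handles. Everything else is a direct computation, and the hypotheses $p\ge2$ and $sp<2$ enter precisely at the monotonicity in Step 2 and the convergence near the origin in Step 3.
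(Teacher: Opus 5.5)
Your proposal is correct and is essentially the paper's proof: the pointwise bound $|u(x)-u(x+y)|^{p-2}\le(2M)^{p-2}$, polar coordinates split at $|y|=1$, and the identity $\frac{1}{2-sp}+\frac{1}{sp}=\frac{2}{sp(2-sp)}$. Your care about representatives goes beyond the paper, which tacitly assumes $0\le u\le M$ pointwise, but redefining $u$ on a null set does change $\nu_u$ at base points $x$ inside that null set, because the density involves the point value $u(x)$; so the bound really holds for every $x$ with $|u(x)|\le M$, i.e.\ for a.e.\ $x$ with a fixed representative.
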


\begin{proof}
Since $0\leq u\leq M$ and $sp<2$, we get
\begin{equation*}
\begin{aligned}
\int_{\R^N\setminus\{0\}}(1\wedge|y|^2)\,\nu_u(\dd y)
&= \int_{|y|\leq 1}|y|^2\,\nu_u(\dd y)
+ \int_{|y|>1}\nu_u(\dd y)\\
&=\int_{|y|\leq 1}|y|^2\,
\frac{|u(x)-u(x+y)|^{p-2}}{|y|^{N+sp}}\dd y+\int_{|y|>1}\frac{|u(x)-u(x+y)|^{p-2}}{|y|^{N+sp}}\dd y\\
&\le(2M)^{p-2}
\int_{|y|\leq 1}\frac{|y|^2}{|y|^{N+sp}}\dd y+(2M)^{p-2}\int_{|y|>1}\frac{\dd y}{|y|^{N+sp}}\\
&= (2M)^{p-2}
\int_{|y|\leq 1}|y|^{2-N-sp}\dd y+(2M)^{p-2}\int_{|y|>1}\frac{\dd y}{|y|^{N+sp}}. 
\end{aligned}
\end{equation*}
Therefore, passing to spherical coordinates $y=r\omega$,
$r=|y|\in(0,1]$, $\omega\in\mathbb{S}^{N-1}$,
$\dd y=r^{N-1}\dd r\,\dd\omega$:
\begin{equation*}
\begin{aligned}
\int_{\R^N\setminus\{0\}}(1\wedge|y|^2)\,\nu_u(\dd y)
&=\int_{\mathbb{S}^{N-1}}\int_0^1 r^{2-N-sp}\cdot r^{N-1}\dd r\dd\omega + \int_{\mathbb{S}^{N-1}}\int_1^\infty r^{-(N+sp)}\cdot r^{N-1}\dd r\dd\omega\\
&= \omega_N\int_0^1 r^{1-sp}\dd r + \omega_N\int_1^\infty r^{-sp-1}\dd r\\
&= \frac{\omega_N}{2-sp} +\frac{\omega_N}{sp}, 
\end{aligned}
\end{equation*}
where $\omega_N=|\mathbb{S}^{N-1}|$. This completes the proof.
\end{proof} 
\section{From PDE \eqref{eq:PDE} to Fokker-Planck-Kolmogorov Equation \eqref{eq:FPE_general}}

In the next, we work under the following assumptions:
\begin{enumerate}
\item[(A1)] $p\ge2$, $s\in(0,1)$, $N\geq 1$.
\item[(A2)] $u_0\in L^1(\R^N)\cap L^2(\R^N)\cap L^\infty(\R^N)$.
\item[(A3)] $u_0\geq 0$ a.e.\ on $\R^N$.
\item[(A4)] $\displaystyle\int_{\R^N}u_0(x)\,\dd x=1$.
\item[(A5)] $\spt(u_0)\subset B_{R_0}$ for some $R_0>0$.
\item[(A6)] $u_0\in D(A):=D(-\Delta_p)\cap D(-\Delta_p^s)$,
i.e.\ $-\Delta_p u_0\in L^2(\R^N)$,
$-\Delta_p^s u_0\in L^2(\R^N)$,
and in particular
$u_0\in W^{1,p}(\R^N)\cap W^{s,p}(\R^N)$.
\end{enumerate}
 
\noindent
The parabolic PDE to be solved is:
\begin{equation}
\begin{cases}
\displaystyle\frac{\partial u}{\partial t}(t,x)
=\Delta_p u(t,x)+\Delta_p^s u(t,x),
\qquad(t,x)\in(0,\infty)\times\R^N,\\[8pt]
u(0,x)=u_0(x),\qquad x\in\R^N.
\end{cases}
\label{eq:PDE}
\end{equation}
 
\begin{definition}[Weak solution]
\label{def:weak}
We say $u$ is a \emph{weak solution} of \eqref{eq:PDE} if
\[
u\in L^p_{\mathrm{loc}}\!\bigl(0,\infty;W^{1,p}(\R^N)\bigr)
\cap L^p_{\mathrm{loc}}\!\bigl(0,\infty;W^{s,p}(\R^N)\bigr)
\cap L^\infty_{\mathrm{loc}}\!\bigl(0,\infty;L^2(\R^N)\bigr),
\]
and for every $\phi\in C^\infty_c(\R^N)$
and $0<\tau<T<\infty$:
\begin{align}
&\int_\tau^T\!\int_{\R^N}
\Bigl[-u\,\partial_t\phi
+|\nabla u|^{p-2}\nabla u\cdot\nabla\phi\Bigr]
\dd x\,\dd t
\notag\\
&+\frac{1}{2}\int_\tau^T\!\int_{\R^N}\!\int_{\R^N}
\frac{|u(t,x)-u(t,y)|^{p-2}(u(t,x)-u(t,y))(\phi(x)-\phi(y))}
{|x-y|^{N+sp}}
\,\dd y\,\dd x\,\dd t=0.
\label{eq:weak_form}
\end{align}
\end{definition}

\begin{theorem}[Existence and regularity]
\label{thm:existence_PDE}
Under assumptions \textup{(A1)--(A6)},
problem \eqref{eq:PDE} admits a weak solution
$u:[0,+\infty)\times\Omega\to \R$ satisfying:
\begin{align*}
  &1)\,\,u \in L^\infty\!\bigl(0,T;\,L^2(\R^N)\bigr)\cap L^p\!\bigl(0,T;\,W^{1,p}(\R^N)\bigr)\cap L^p\!\bigl(0,T;\,W^{s,p}(\R^N)\bigr),\\
&2)\,\,\spt u\cap(\R^N\times[0,t]) \mbox{ is bounded for any }t>0;\\
&3)\,\,|u(t)|_1\leq|u_0|_1
      \mbox{ for a.e. }t\in[0,T];\\
&4)\,\,u(t,x)\geq 0\;\text{a.e.},\\
&5)\,\,u\in L^\infty((0,T)\times\R^d)\mbox{ and }|u(t)|_\infty\leq|u_0|_\infty\mbox{ for a.e. }t\in(0,T)\\
&6)\,\,\int_{\R^N}u(t,x)\,\dd x = \int_{\R^d}u_0(x)\dd x,\;\text{ for all }t>0.
\label{eq:reg_6}
\end{align*}
\end{theorem}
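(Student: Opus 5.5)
The plan is to obtain the solution as the nonlinear semigroup generated by a maximal monotone subdifferential in $H=L^2(\R^N)$, and then read off properties 3)--6) from contraction and comparison arguments. Property 2) needs a separate localization argument.

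\textbf{Existence via Brezis.} Define the convex functional
\[
\Phi(u):=\frac1p\int_{\R^N}|\nabla u|^p\,\dd x+\frac{1}{2p}\iint_{\R^N\times\R^N}\frac{|u(x)-u(y)|^p}{|x-y|^{N+sp}}\,\dd x\,\dd y
\]
for $u\in V$, and set $\Phi(u)=+\infty$ otherwise.

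$\Phi$ is convex, proper, and lower semicontinuous on $H$; lower semicontinuity follows from weak lower semicontinuity of the two seminorms together with Fatou's lemma. Its subdifferential satisfies $\partial\Phi\supset -\Delta_p-\Delta_p^s$ on $D(A)$. Alternatively, $A_V:V\to V^*$ is monotone, hemicontinuous and coercive on the reflexive space $V$, so by \cite{Barbu2010} (Theorem 2.4, Corollary 2.2) its restriction to $H$ is maximal monotone.

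Brezis' theorem \cite{Brezis1973} for $u_0\in D(A)$ then gives a unique strong solution $u\in W^{1,\infty}(0,T;H)$ of $u'+Au=0$. Testing with $u$ yields
\[
\tfrac12|u(t)|_2^2+\int_0^t p\,\Phi(u)\,\dd\tau=\tfrac12|u_0|_2^2,
\]
which gives 1). Testing with $\phi\in C_c^\infty$ and symmetrizing the nonlocal term gives the weak formulation \eqref{eq:weak_form}.

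\textbf{Sign, $L^\infty$ and $L^1$ bounds.} The resolvents $(I+\lambda A)^{-1}$ are order preserving and contractive in every $L^q$, $1\le q\le\infty$. This is the T-accretivity of $A$: test the resolvent equation with $\mathrm{sign}^+(u-v)$ or $(u-k)^+$, and use that $r\mapsto|r|^{p-2}r$ is monotone, so both the local and the nonlocal terms give nonnegative contributions. Passing through the Crandall--Liggett exponential formula $u(t)=\lim_n(I+\frac tnA)^{-n}u_0$ then yields:
\begin{itemize}
\item[(a)] positivity 4), by comparison with the solution $0$;
\item[(b)] the bound 5), by comparison with constants, since constants are fixed points of the resolvent;
\item[(c)] the $L^1$ contraction 3).
\end{itemize}

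\textbf{Mass conservation 6).} Take $\phi=\phi_R(x)=\phi(x/R)$ in \eqref{eq:weak_form}. The local term is bounded by $R^{-1}\|\nabla u\|_{p}^{p-1}|\spt\nabla\phi_R|^{1/p}$, which is not obviously small. This is where 2) becomes essential: once $\spt u(t)$ is bounded uniformly on $[0,T]$, choosing $\phi_R\equiv1$ on that support kills the local term. The nonlocal term equals $\iint\cdots(1-\phi_R(y))$, and it tends to $0$ as $R\to\infty$ by the $L^\infty$ bound and the integrable tail $|y|^{-N-sp}$. Hence $\frac{d}{dt}\int u=0$.

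\textbf{Main obstacle: bounded support 2).} For the pure $p$-Laplacian with $p>2$ this is classical finite speed of propagation. The nonlocal term, however, is expected to destroy compact support for the linear fractional heat flow. The nonlinearity $|u(x)-u(y)|^{p-2}$ vanishes where $u$ is small, which is what gives hope.

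My first attempt is an energy/De Giorgi argument. Take radii $\rho_i\downarrow\rho_\infty>R_0$ from outside and cutoffs $\zeta_i$ supported in $\{|x|>\rho_{i+1}\}$. Set $\mathtt k_i=\sup_t\int u^2\zeta_i^p+\iint_{\Omega_T}|\nabla(u\zeta_i)|^p$. Test with $u\zeta_i^p$; the tail contribution of the nonlocal term is controlled by $\|u\|_\infty^{p-2}$ times an integral over the region where $u$ is nonzero. Combining Lemma \ref{lem:GN} with $\sigma=1$ or $2$ and the $L^1$ bound gives a recursion $\mathtt k_{i+1}\le CB^i\mathtt k_i^{1+\alpha}$. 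Smallness of $\mathtt k_0$ holds on short time intervals, since $u_0=0$ outside $B_{R_0}$, so Lemma \ref{lem:DG} gives $\mathtt k_\infty=0$; iterating in time extends this to any bounded interval.

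The delicate point is that the nonlocal tail term does not obviously have the superlinear homogeneity needed for the recursion. I expect this is where $p\ge4$ or $sp>1$ would enter, and this step is where I would expect the difficulty, deferring the details to Appendix A.
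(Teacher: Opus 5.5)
The gap is in 2), and it cannot be closed because 2) is false. Your existence argument and your proofs of 1) and 3)--5) are sound. They differ from the paper only in presentation: you use the energy identity $\langle Au,u\rangle_{L^2}=p\,\Phi(u)$ where the paper uses monotonicity of $t\mapsto\Phi(u(t))$, and T-accretivity of the resolvents with the Crandall--Liggett formula where the paper tests the evolution equation with $\mathcal{X}_\delta(u)$, $\eta_\delta(u)$, $(u-M)^+$. (Nonzero constants are not in $L^2(\R^N)$, so 5) should come from testing the resolvent equation with $(u-k)^+$, not from comparison with a fixed point.)

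\textbf{Why 2) fails.} You defer 2) to Appendix A, and your own suspicion there is decisive. The weight $|u(x)-u(y)|^{p-2}$ does not degenerate where $u$ is small: for $x$ outside the support and $y$ in the bulk it equals $u(y)^{p-2}$. Suppose $u(\tau)=0$ a.e.\ outside $B_R$ for all $\tau\in[0,t]$. Take $\phi\in C_c^\infty(\R^N\setminus\overline{B_R})$ with $\phi\ge0$, $\phi\not\equiv0$. Then $\int u(\tau)\phi\,\dd x\equiv0$, and the local flux vanishes because $\nabla u(\tau)=0$ a.e.\ outside $B_R$. Splitting the double integral according to whether $x,y$ lie in $B_R$, the strong formulation $u'=-A_Vu$ and 4) give
\[
0=\frac{\dd}{\dd\tau}\int_{\R^N}u\,\phi\,\dd x=\int_{B_R}u(x,\tau)^{p-1}\Bigl(\int_{\R^N}\frac{\phi(y)}{|x-y|^{N+sp}}\,\dd y\Bigr)\dd x\qquad\text{for a.e. }\tau\in(0,t).
\]
So $u(\tau)=0$ for a.e.\ $\tau\in(0,t)$, hence $u_0=0$ by continuity, contradicting (A4).

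In the Caccioppoli inequality obtained by testing with $\eta^pu$, this is exactly the term you flagged. Pairs with $x$ in the outer region and $y$ in the bulk produce a source that is linear in $u$ on the outer region, with coefficient $\int_{B_r}u(y)^{p-1}|x-y|^{-N-sp}\,\dd y$ fixed by the bulk. No bound of the form $CB^i\mathtt{k}_{i-1}^{1+\alpha}$ can absorb it.

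Appendix A (Proposition~\ref{lem:support}) does not get around this, so deferring to it does not help. Its nonlocal estimate produces $\int_0^t\int_{\R^N}|u|^p$ over the whole space. In passing from \eqref{eq:energy} to \eqref{cinq}, this is replaced by an integral over $\R^N\setminus B_{r_i}$. That replacement is legitimate for the local term, since $|D\eta|^p$ vanishes on $B_{r_i}$, but in the nonlocal term it discards exactly the bulk contribution.

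\textbf{Consequence for 6).} Your proof of mass conservation, like the paper's, loses its key step. You can no longer take $\phi_R\equiv1$ on $\spt u(t)$ to kill the local flux, and its crude bound is, as you noted, not obviously small. Your estimate of the nonlocal term also relied on $u(t)$ vanishing outside a fixed ball. Without that, pairs $x,y$ that are close to each other in the transition region of $\phi_R$ contribute, and the $L^\infty$ bound plus the integrable tail of $|z|^{-N-sp}$ no longer suffice, since $sp>1$. Mass conservation therefore needs an argument that does not use compact support, for instance decay estimates for $u$ and $\nabla u$ at infinity strong enough to make both flux terms vanish as $R\to\infty$. Property 2) itself has to be removed from the statement.
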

 
\begin{proof}\quad\\\\
$\bullet$ \textbf{Proof of existence : }The main idea of the proof is based on the theorem  \cite[Theorem~3.1 page 54]{Brezis1973}. However, in order to do so, we first need to prove that the operator $A:=-\Delta_p-\Delta_p^s$ is maximal monotone (or m-accretive) in $L^2(\R^N)$.\\\\
$\square$\textit{ Step 1 :} We prove that $A$ is monotone. Define
\begin{equation}
\Phi(u) =
\begin{cases}
\dfrac{1}{p}\displaystyle\int_{\R^N}|\nabla u|^p\,\dd x
+\dfrac{1}{2p}\displaystyle\iint_{\R^N\times\R^N}
\dfrac{|u(x)-u(y)|^p}{|x-y|^{N+sp}}\,\dd x\,\dd y
& \text{if }u\in V,\\[8pt]
+\infty & \text{otherwise},
\end{cases}
\label{eq:Phi}
\end{equation}
with $V:=W^{1,p}(\R^N)\cap W^{s,p}(\R^N)\cap L^2(\R^N)$. For $u,v\in D(A)$, by integration by parts and
the symmetry of the fractional kernel:
\begin{align}
&\langle Au-Av, u-v \rangle_{L^2}
= \int_{\mathbb{R}^N} \bigl( |\nabla u|^{p-2}\nabla u - |\nabla v|^{p-2}\nabla v \bigr) \cdot \nabla(u-v) \, \mathrm{d} x \notag \\
&\quad + \frac{1}{2} \iint_{\mathbb{R}^N \times \mathbb{R}^N} \begin{aligned}[t]
&\Bigl( |u(x)-u(y)|^{p-2}(u(x)-u(y))- |v(x)-v(y)|^{p-2}(v(x)-v(y)) \Bigr)\\
&\times\frac{\left((u-v)(x) - (u-v)(y)\right)}{|x-y|^{N+sp}} \, \mathrm{d} x \, \mathrm{d} y \ge 0
\end{aligned}
\label{eq:mono}
\end{align}
due to the monotonicity of $t\mapsto|t|^{p-2}t$.
Hence $A$ is monotone.\\\\ 
$\square$\textit{ Step 2 :} We prove that $A$ is maximal, that is $R(I+A) = L^2(\R^N)$.
\noindent
Define the operator $A_V:V\to V^{*}$ by:
\begin{equation}
\langle A_V u,v\rangle_{V^{*},V}
:= \int_{\R^N}|\nabla u|^{p-2}\nabla u\cdot\nabla v\,\dd x
+\frac{1}{2}\iint_{\R^N\times\R^N}
\frac{|u(x)-u(y)|^{p-2}(u(x)-u(y))(v(x)-v(y))}
{|x-y|^{N+sp}}\,\dd x\,\dd y.
\label{eq:AV}
\end{equation}
 It is straitforward to check that $A_V$ is monotone and hemicontinuous. Indeed, by dominated convergence, for fixed $u,v,w\in V$ the map
$t\mapsto \langle A_V(u+tv),w\rangle_{V^{*},V}$
is continuous and the monotonicity of
$t\mapsto|t|^{p-2}t$ guarantees that
\begin{align}
&\langle A_V u-A_V v, u-v \rangle_{V^{*},V}
= \int_{\mathbb{R}^N} \bigl( |\nabla u|^{p-2}\nabla u - |\nabla v|^{p-2}\nabla v \bigr) \cdot \nabla(u-v) \, \mathrm{d} x \notag \\
&\quad + \frac{1}{2} \iint_{\mathbb{R}^N \times \mathbb{R}^N} \begin{aligned}[t]
&\Bigl( |u(x)-u(y)|^{p-2}(u(x)-u(y))- |v(x)-v(y)|^{p-2}(v(x)-v(y)) \Bigr)\\
&\times\frac{\left((u-v)(x) - (u-v)(y)\right)}{|x-y|^{N+sp}} \, \mathrm{d} x \, \mathrm{d} y \ge 0.
\end{aligned}
\label{eq:mono_V}
\end{align}
Consequently, according to Browder-Minty's theorem  \cite[Theorem~2.4]{Barbu2010}, $A_V$ is maximal monotone in $V\times V^{*}$. Also, $\tilde{A}_V:=I+A_V$ is maximal monotone and hemicontinuous as sum of two maximal monotone and hemicontknuous operators. Therefore, according to Browder-Minty's theorem  \cite[Theorem~2.4]{Barbu2010}, $\tilde{A}_V$ is maximal monotone
in $V\times V^{*}$. Moreover, it is coercive because
\begin{equation}
\langle\tilde{A}_V u,u\rangle_{V^{*},V}
= \|u\|_{L^2}^2+\|\nabla u\|_{L^p}^p
+\frac{1}{2}[u]_{W^{s,p}}^p
\to+\infty
\quad\text{as }\|u\|_V\to\infty.
\label{eq:coercive}
\end{equation}
It follows from \cite[Corollary 2.2]{Barbu2010} that $R(\tilde{A}_V)=V^{*}$, which implies that $L^2(\R^N)\subset V^{*}\subset R(\tilde{A}_V)$.\\\\
\noindent
As a consequence, for any $f\in L^2(\R^N)$, there exists $u\in V$
such that $u+A_V u=f$ in $V^{*}$.
Since $f\in L^2(\R^N)$ and $u\in L^2(\R^N)$, we have
$A_V u = f-u\in L^2(\R^N)$,
hence $u\in D(A)$ and $Au=A_V u$.
Therefore $u+Au=f$ and $R(I+A)=L^2(\R^N)$.\\\\
$\square$\textit{ Conclusion :} Using \cite[Theorem 3.1]{Brezis1973} applied to
$H=L^2(\R^N)$ provides the existence of a unique function
$u:[0,+\infty)\to L^2(\R^N)$ satisfying
\eqref{eq:PDE}.\\\\
$\bullet$ \textbf{Proof of 1) : }
Set $A=-\Delta_p-\Delta_p^s$ and $D(A)=D(-\Delta_p)\cap D(-\Delta_p^s)$, where
\begin{align*}
&D(-\Delta_p)=\{u\in W^{1,p}(\R^N)\cap L^2(\R^N)
\mid -\Delta_p u\in L^2(\R^N)\},\\
&\mbox{and }\quad D(-\Delta_p^s)=\{u\in W^{s,p}(\R^N)\cap L^2(\R^N)
\mid -\Delta_p^s u\in L^2(\R^N)\}.
\end{align*}
According to \cite[Theorem 3.1]{Brezis1973}, we know that
\begin{align*}
&\textup{(P1)}\quad
u(t)\in D(A)
\quad\text{for all }t>0,\\
&\textup{(P2)}\quad
\frac{du}{dt}\in L^\infty(0,+\infty;H)
\quad\text{and}\quad
\left\|\frac{du}{dt}\right\|_{L^\infty(0,+\infty;H)}
\leq |Au_0|_H,\\
&\textup{(P3)}\quad
\frac{du}{dt}(t)+Au(t)=0
\quad\text{a.e.\ on }(0,+\infty),\\
&\textup{(P4)}\quad u(0)=u_0,\\
&\textup{(P5)}\quad
\left\|u(t)-\hat{u}(t)\right\|_{L^2(\R^N)}\leq\left\|u(0)-\hat{u}(0)\right\|_{L^2(\R^N)}
\quad\forall\,t\geq 0,
\end{align*}
where $\hat{u}$ refers to another solution of \eqref{eq:PDE} satisfying (P1)-(P3). From (P5), taking the particular case $\hat{u}\equiv0$, we get
\begin{equation*}
\|u(t)\|_{L^2(\R^N)}
\leq\|u_0\|_{L^2(\R^N)}
\quad\forall\,t\geq 0,
\end{equation*}
hence $u\in L^\infty(0,T;L^2(\R^N))$.
\noindent
Now we check that $A=\partial\Phi$. Indeed, $\Phi:L^2(\R^N)\to(-\infty,+\infty]$ is convex
and lower semicontinuous, so $\partial\Phi$ is
maximal monotone in $L^2(\R^N)$, see \cite[Theorem A]{Rockafellar1970}. Also, for $u\in D(A)$ and $v\in V$,
by the convexity inequality
$|b|^p-|a|^p\geq p|a|^{p-2}a\cdot(b-a)$ it holds
\begin{align}
\langle Au,u-v\rangle_{L^2}
&= \int_{\R^N}|\nabla u|^{p-2}\nabla u
\cdot\nabla(u-v)\,\dd x
\notag\\
&\quad+\frac{1}{2}\iint_{\R^N\times\R^N}
\frac{|u(x)-u(y)|^{p-2}(u(x)-u(y))
\bigl((u-v)(x)-(u-v)(y)\bigr)}
{|x-y|^{N+sp}}\,\dd x\,\dd y
\notag\\
&\geq \frac{1}{p}\int_{\R^N}|\nabla u|^p\,\dd x
-\frac{1}{p}\int_{\R^N}|\nabla v|^p\,\dd x
+\frac{1}{2p}\iint_{\R^N\times\R^N}
\frac{|u(x)-u(y)|^p-|v(x)-v(y)|^p}
{|x-y|^{N+sp}}\,\dd x\,\dd y
\notag\\
&= \Phi(u)-\Phi(v),
\label{eq:subdiff}
\end{align}
hence $A\subset\partial\Phi$. Consequently, by definition of a maximal monotone operator, $A=\partial\Phi$.\\\\
Using assumption (A6) and \cite[Theorem 3.2]{Brezis1973}, we deduce that the function $t\mapsto\Phi(u(t))$ is
convex, decreasing and Lipschitz on
every $[\delta,+\infty)$, $\delta>0$, with:
\begin{equation*}
\frac{d^+}{dt}\Phi(u(t))
= -\left\|\frac{d^+u}{dt}(t)\right\|_{L^2}^2
\leq 0
\quad\forall\,t>0.
\end{equation*}
It follows that $\Phi(u(t))\leq\Phi(u(\delta))$ for all
$0<\delta\leq t$ and passing to the limit as $\delta\rightarrow0^{+}$, we obtain
\begin{equation*}
\Phi(t)\le\liminf_{\delta\to 0^+}\Phi(u(\delta))=\Phi(u_0)= \frac{1}{p}\|\nabla u_0\|_{L^p}^p
+\frac{1}{2p}[u_0]_{W^{s,p}}^p
< \infty.
\end{equation*}
We conclude that
\begin{align*}
\int_0^T\|\nabla u(t)\|_{L^p}^p\,\dd t\leq p\,\varphi(u_0)\cdot T<\infty\quad\mbox{ and }\quad\int_0^T[u(t)]_{W^{s,p}}^p\,\dd t\leq 2p\,\varphi(u_0)\cdot T<\infty,
\end{align*}
hence $u\in L^p(0,T;W^{1,p}(\R^N))$
and $u\in L^p(0,T;W^{s,p}(\R^N))$.\\\\
\noindent
$\bullet$ \textbf{Proof of 2) :} This is a direct consequence of Proposition \ref{lem:support} in the Appendix.\\\\
$\bullet$ \textbf{Proof of 3) :} Let $\mathcal{X}_\delta$ be the function
\begin{equation*}
\mathcal{X}_\delta(r) =
\begin{cases}
1 & r\geq\delta,\\
r/\delta & |r|<\delta,\\
-1 & r\leq-\delta.
\end{cases}
\label{eq:Xdelta}
\end{equation*}
Since $\mathcal{X}_\delta(u(t))\in L^2$ for a.e.\ $t\in(0,T)$,
according to \cite[Lemma~4.1]{Barbu2010} we have
\begin{equation*}
\Bigl(\frac{d}{dt}u(t),\mathcal{X}_\delta(u(t))\Bigr)_2
= \frac{d}{dt}\int_{\R^d}j_\delta(u(t,x))\dd x,
\quad\text{a.e. }t\in(0,T),
\end{equation*}
where $\displaystyle j_\delta(v):=\int_0^v\mathcal{X}_\delta(s)\dd s$. Therefore, testing \eqref{eq:PDE} with $\varphi=\mathcal{X}_\delta(u)$, we get
\begin{align}
\frac{d}{dt}\int_{\R^d}j_\delta(u)\dd x
&= -\int_{\R^d}|\nabla u|^p
\mathcal{X}_\delta'(u)\dd x
\notag\\
&\quad -\frac{1}{2}\int_{\R^d}\!\int_{\R^d}
\frac{|u(x)-u(y)|^{p-2}(u(x)-u(y))
(\mathcal{X}_\delta(u(x))-\mathcal{X}_\delta(u(y)))}
{|x-y|^{d+sp}}\dd y\,\dd x.
\label{eq:ddt_jdelta}
\end{align}
Notice that $\mathcal{X}_\delta'=
\begin{cases}
1/\delta & -\delta<r<\delta,\\
0 & \text{elsewhere}
\end{cases}$ is positive, which implies that $(u(x)-u(y))(\mathcal{X}_\delta(u(x))-\mathcal{X}_\delta(u(y)))\geq 0$
for all $x,y\in\R^N$. Consequently
\begin{equation}
\frac{d}{dt}\int_{\R^d}j_\delta(u(t,x))\dd x\leq 0,
\quad\text{a.e. }t>0.
\label{eq:ddt_neg}
\end{equation}
Since $j_\delta(v)\to|v|$ as $\delta\to 0$, integrating \eqref{eq:ddt_neg} over $(0,t)$ and passing to the limit, we deduce
\begin{equation}
\int_{\R^d}|u(t,x)|\dd x
\leq \int_{\R^d}|u_0(x)|\dd x,
\quad\text{a.e. }t\in[0,T].
\label{eq:L1bound}
\end{equation}
$\bullet$ \textbf{Proof of 4) :} We apply the same argument as the precedent step with
$\mathcal{X}_\delta$ replaced by the approximation of the derivative of $r\mapsto\max\{-r,0\}$, namely
\begin{equation}
\eta_\delta(r) =
\begin{cases}
-1 & r\leq-\delta,\\
r/\delta & -\delta<r<0,\\
0 & r\geq 0.
\end{cases}
\label{eq:etadelta}
\end{equation}
\noindent
Since $\eta_\delta(u(t))\in L^2$ for a.e.\ $t\in(0,T)$,
according to \cite[Lemma~4.1]{Barbu2010} we have
\begin{equation*}
\Bigl(\frac{d}{dt}u(t),\eta_\delta(u(t))\Bigr)_2
= \frac{d}{dt}\int_{\R^d}J_\delta(u(t,x))\dd x,
\quad\text{a.e. }t\in(0,T),
\end{equation*}
where $\displaystyle J_\delta(v):=\int_0^v\eta_\delta(s)\dd s$. Therefore, testing \eqref{eq:PDE} with $\varphi=\eta_\delta(u)$, we deduce
\begin{align}
\frac{d}{dt}\int_{\R^d}J_\delta(u)\dd x
&= -\int_{\R^d}|\nabla u|^p
\eta_\delta'(u)\dd x
\notag\\
&\quad -\frac{1}{2}\int_{\R^d}\!\int_{\R^d}
\frac{|u(x)-u(y)|^{p-2}(u(x)-u(y))
(\eta_\delta(u(x))-\eta_\delta(u(y)))}
{|x-y|^{d+sp}}\dd y\,\dd x.
\label{eq:pos_ddt}
\end{align}
By positivity of $\eta_\delta'(r) =
\begin{cases}
1/\delta & -\delta<r<0,\\
0 & \text{elsewhere}
\end{cases}$, we have $(u(x)-u(y))(\eta_\delta(u(x))-\eta_\delta(u(y)))\geq 0$, hence
\begin{equation}
\frac{d}{dt}\int_{\R^d}J_\delta(u(t,x))\dd x\leq 0,
\quad\text{a.e. }t>0.
\label{eq:pos_mono}
\end{equation}
Since $J_\delta(v)\to\max\{-v,0\}$ as $\delta\to 0$, integrating \eqref{eq:pos_mono} over $(0,t)$ and passing to the limit, we deduce
\begin{equation}
\int_{\R^d}u^-(t,x)\dd x\leq\int_{\R^d}u_0^-(x)\dd x=0,
\quad\text{a.e. }t\in[0,T].
\label{eq:positivity}
\end{equation}
Hence $u\geq 0$ a.e.\ on $(0,T)\times\R^d$.\\\\
$\bullet$ \textbf{Proof of 5) :} Let $M:=|u_0|_\infty$.
Testing \eqref{eq:PDE} with
$\varphi=(u-M)^+:=\max\{u-M,0\}$, we get
\begin{align*}
\frac{1}{2}\frac{d}{dt}\int_{\R^d}((u-M)^+)^2\dd x&= -\int_{\{u>M\}}|\nabla u|^p\dd x\\
&-\frac{1}{2}\int_{\R^d}\!\int_{\R^d}
\frac{|u(x)-u(y)|^{p-2}(u(x)-u(y))((u(x)-M)^+-(u(y)-M)^+)}{|x-y|^{d+sp}}\dd y\,\dd x.
\end{align*}
Since $(r\mapsto(r-M)^+)$ is nondecreasing, then for any $a,b\in\R$ and $M\in\R$:
\begin{equation}
(a-b)\bigl((a-M)^+-(b-M)^+\bigr)\geq 0.
\label{eq:sign_NL}
\end{equation}
Consequently,
\begin{equation}
\frac{d}{dt}\int_{\R^d}((u-M)^+)^2\dd x\leq 0,
\quad\text{a.e. }t>0.
\label{eq:vplus_mono}
\end{equation}
Integrating \eqref{eq:vplus_mono} over $(0,t)$, we deduce
\begin{equation}
\int_{\R^d}(u(t,x)-M)^+{}^2\dd x
\leq\int_{\R^d}(u_0(x)-M)^+{}^2\dd x=0,
\label{eq:upper}
\end{equation}
hence $(u(t,x)-M)^+=0$ a.e. in $\R^N$, that is \ $u\leq M$ a.e. in $\R^N$.
Similarly, testing with $(-u-M)^+$ gives $u\geq -M$ a.e. in $\R^N$, and we conclude that
\begin{equation}
|u(t,x)|\leq|u_0|_\infty,
\quad\text{a.e. }(t,x)\in(0,T)\times\R^d.
\label{eq:Linfty}
\end{equation}

$\bullet$ \textbf{Proof of 6) :}
Let $\varphi_n(x):=\eta(|x|^2/n)$ where
$\eta\in C^2([0,\infty))$,
$\eta(r)=1$ for $r\in[0,1]$,
$\eta(r)=0$ for $r\geq 2$.
Testing \eqref{eq:PDE} with $\varphi_n$
and integrating over $(0,t)$, we get
\begin{align}
\int_{\R^N}\varphi_n(x)u(t,x)\dd x
&= -\int_0^t\!\int_{\R^N}
|\nabla u|^{p-2}\nabla u\cdot\nabla\varphi_n
\,\dd x\,\dd s
\notag\\
&\quad -\frac{1}{2}\int_0^t\!\iint_{\R^N\times\R^N}
\frac{|u(x)-u(y)|^{p-2}(u(x)-u(y))
(\varphi_n(x)-\varphi_n(y))}
{|x-y|^{N+sp}}\dd y\,\dd x\,\dd s
\notag\\
&\quad +\int_{\R^N}\varphi_n(x)u_0(x)\dd x.
\label{eq:mass_approx}
\end{align}
By Proposition~\ref{lem:support},
there exists $R(t)<\infty$ such that
$\operatorname{supp}(u(t,\cdot))\subset B_{R(t)}
\subset B_{\sqrt{n}}$ for $n$ large enough.
Since $\varphi_n\equiv 1$ on $B_{\sqrt{n}}$,
we have $\nabla\varphi_n\equiv 0$ on
$\operatorname{supp}(u)$, so the local term
in \eqref{eq:mass_approx} vanishes :
\begin{equation}
\int_0^t\!\int_{\R^N}
|\nabla u|^{p-2}\nabla u\cdot\nabla\varphi_n
\,\dd x\,\dd s = 0
\quad\text{for }n\text{ large enough.}
\label{eq:local_mass}
\end{equation}
For the nonlocal term, since $u(y)=0$
when $|y|\geq\sqrt{n}$ for $n$ large enough,
the integrand is supported on
$B_{R(t)}\times \left(B_{\sqrt{n}}\right)^c$
where $|u(x)-u(y)|=|u(x)|$ for $(x,y)\in B_{R(t)}\times \left(B_{\sqrt{n}}\right)^c$.
Moreover, for $x\in B_{R(t)}$, one has
$\displaystyle\int_{|y|\geq\sqrt{n}}|x-y|^{-(N+sp)}\dd y
\leq C_{N,sp}(\sqrt{n}-R(t))^{-sp}$,
hence:
\begin{align}
&\left|\frac{1}{2}\int_0^t\!\iint_{\R^N\times\R^N}
\frac{|u(x)-u(y)|^{p-2}(u(x)-u(y))
(\varphi_n(x)-\varphi_n(y))}
{|x-y|^{N+sp}}\dd y\,\dd x\,\dd s\right|
\notag\\
&\leq\frac{C_{N,sp}}{(\sqrt{n}-R(t))^{sp}}
\int_0^t\!\int_{B_{R(t)}}|u(x,s)|^{p-1}
\dd x\,\dd s
\xrightarrow{n\to\infty}0,
\label{eq:nonlocal_mass}
\end{align}
since from 5) $\displaystyle\int_{B_{R(t)}}|u(x,s)|^{p-1}
\dd x\le\|u_0\|_{L^\infty}^{p-1}\cdot|B_{R(t)}|<\infty$. Since $\operatorname{supp}(u_0)\subset B_{R_0}
\subset B_{\sqrt{n}}$ for $n$ large enough
(assumption~\textup{(A5)}),
$\displaystyle\int_{\R^N}\varphi_n u_0\,\dd x
= \int_{\R^N}u_0\,\dd x$ exactly.
Letting $n\to\infty$ in \eqref{eq:mass_approx}
and using \eqref{eq:local_mass}--\eqref{eq:nonlocal_mass},
we conclude that
\begin{equation}
\int_{\R^N}u(t,x)\dd x
= \int_{\R^N}u_0(x)\dd x,
\quad\forall\,t\in[0,T].
\label{eq:mass_conservation}
\end{equation}
This completes the proof.

\end{proof}
\subsection{Existence of a distributional solution for the PDE \eqref{eq:PDE_distributional}}
Using the chain rule $\Delta_pu=\Delta(|\nabla u|^{p-2}u)
-\operatorname{div}(\nabla(|\nabla u|^{p-2})u)$, we may rewrite \eqref{eq:PDE} as follows
\begin{equation}
\begin{cases}
\displaystyle
\frac{\partial u}{\partial t}(t,x)
= \Delta(|\nabla u|^{p-2}u)
-\operatorname{div}(\nabla(|\nabla u|^{p-2})u) + \Delta_p^su,
\quad (t,x)\in(0,\infty)\times\R^N,
\\[8pt]
u(0,x) = u_0(x), \qquad x\in\R^N.
\end{cases}
\label{eq:PDE_distributional}
\end{equation}
\begin{definition}\label{def:23}
A function $u$ is called a \emph{distributional solution} of \eqref{eq:PDE_distributional} if it
satisfies
\begin{enumerate}
\item[(C1)] $u\in L^1_{\mathrm{loc}}((0,\infty);W^{1,1}_{\mathrm{loc}}(\R^N))\cap L^p_{\mathrm{loc}}((0,\infty);W^{s,p}_{\mathrm{loc}}(\R^N))$,
    \item[(C2)] $\nabla u\in L^{p-1}_{\mathrm{loc}}((0,\infty);
L^{p-1}_{\mathrm{loc}}(\R^N;\R^N))$
\item[(C3)] $|\nabla u|^{p-2}\in L^1_{\mathrm{loc}}((0,\infty);
W^{1,1}_{\mathrm{loc}}(\R^N))$,
\item[(C4)] $|\nabla u|^{p-2}u\in L^1_{\mathrm{loc}}((0,\infty)\times\R^N)$,
\item[(C5)] $\nabla(|\nabla u|^{p-2})u\in L^1_{\mathrm{loc}}((0,\infty)
\times\R^N;\R^N)$,
\item[(C6)] $\displaystyle\frac{|u(t,x)-u(t,x+z)|^{p-2}}{|z|^{N+sp}}u(t,x)
\in L^1_{\mathrm{loc}}((0,\infty)\times\R^N
\times(\R^N\setminus\{0\}))$,
\end{enumerate}
and for all
$\varphi\in C^2_0((0,\infty)\times\R^N)$:
\begin{align}
&\int_0^\infty\!\int_{\R^N}
u\Bigl(\partial_t\varphi+|\nabla u|^{p-2}\Delta\varphi
+\nabla(|\nabla u|^{p-2})\cdot\nabla\varphi\Bigr)\dd x\,\dd t
\notag\\
&-\int_0^\infty\!\int_{\R^N}\!\int_{\R^N\setminus\{0\}}
u(t,x)\bigl[\varphi(t,x+z)-\varphi(t,x)
-z\cdot\nabla\varphi(t,x)\mathbf{1}_{|z|\leq 1}\bigr]
\nu_{t,x}(\dd z)\,\dd x\,\dd t = 0.
\label{eq:23}
\end{align}
\end{definition}
\noindent Now consider the following assumptions :
\begin{enumerate}
    \item[(B1)]\,\,$p\geq 4$, $s\in(1/p,1)$, $sp>1$.
    \item[(B2)]\,\,$sp < 2$ (integrability of the L\'evy kernel near the origin),
\item[(B3)]\,\,$sp\in(1,N)$ (Sobolev embedding $W^{s,p}(\R^N)\hookrightarrow
L^q(\R^N)$ for $1\leq q\leq p^*_s:=Np/(N-sp)$).
\end{enumerate}
\begin{theorem}
Under assumptions (A)+(B), $u$ is a solution of \eqref{eq:PDE} in the sense of Definition~\ref{def:weak} if and only if $u$
is a distributional solution of \eqref{eq:PDE_distributional} in the sense of Definition~\ref{def:23}.
\end{theorem}
\begin{proof}
According to Definition \ref{def:23}, it suffices to prove that any solution of \eqref{eq:PDE} in the sense of Definition \ref{def:weak} satisfies properties (C1)-(C6).\par\bigskip\noindent
$\bullet$ \noindent\textbf{Proof of (C1) - (C2).} It is a direct consequence of Theorem \ref{thm:existence_PDE}.\\\\
$\bullet$ \noindent\textbf{Proof of (C3).} Let $[a,b]\subset(0,\infty)$ and $K\subset\R^N$, $K$ compact.
According to Proposition~\ref{prop:hybrid31} in the Appendix, we know that
\begin{equation}
C_0:=\int_a^b\!\int_K|\nabla u|^{p-2}|D^2u|^2\,\dd x\,\dd t<\infty.
\label{eq:C0}
\end{equation}
Therefore, using the chain rule and Hölder's inequalities, we get
\begin{align}\label{chain_rule}
\begin{aligned}
\int_a^b\||\nabla u|^{p-2}\|_{W^{1,1}(K)}\dd t&:=\int_a^b\!\int_K|\nabla u|^{p-2}\,\dd x\,\dd t+\int_a^b\!\int_K|\nabla(|\nabla u|^{p-2})|\,\dd x\,\dd t\\
&=\int_a^b\!\int_K|\nabla u|^{p-2}\,\dd x\,\dd t+(p-2)\int_a^b\!\int_K\bigl(|\nabla u|^{\frac{p-2}{2}}|D^2u|\bigr)
|\nabla u|^{\frac{p-4}{2}}\,\dd x\,\dd t\\
&\le|K|^{2/p}(b-a)^{2/p}\|u\|_{L^p((0,\infty);W^{1,p}(\R^N))}^{p-2}+(p-2)C_0^{1/2}\Bigl(\int_a^b\!\int_K|\nabla u|^{p-4}
\,\dd x\,\dd t\Bigr)^{1/2}\\
&\le|K|^{2/p}(b-a)^{2/p}\left(\|u\|_{L^p((0,\infty);W^{1,p}(\R^N))}^{p-2}+(p-2)C_0^{1/2}\|u\|_{L^p((0,\infty);W^{1,p}(\R^N))}^{\frac{p-4}{2}}\right)\\
&<\infty,
\end{aligned}
\end{align}
hence $|\nabla u|^{p-2}\in L^1_{\mathrm{loc}}((0,\infty);
W^{1,1}_{\mathrm{loc}}(\R^N))$.
\\\\
$\bullet$ \noindent\textbf{Proof of (C4) - (C5).}
Using $\|u(t,\cdot)\|_{L^\infty}\leq M$, we get
\begin{equation}
\begin{aligned}
&\int_a^b\!\int_K |\nabla u|^{p-2}|u|\,\dd x\,\dd t+\int_a^b\!\int_K |\nabla(|\nabla u|^{p-2})||u|\,\dd x\,\dd t\\
&\leq M\int_a^b\!\int_K |\nabla u|^{p-2}\,\dd x\,\dd t+M\int_a^b\!\int_K |\nabla(|\nabla u|^{p-2})|\,\dd x\,\dd t<\infty,
\end{aligned}
\label{eq:R2R3}
\end{equation}
hence the claim.\\\\
$\bullet$ \noindent\textbf{Proof of (C6).}
Fix $0<\delta<R<\infty$. Since $0\leq u\leq M$,
$|u(t,x)-u(t,x+z)|^{p-2}\leq(2M)^{p-2}$ for all $z$:
\begin{align}
\begin{aligned}  &\int_a^b\!\int_K\!\int_{\delta<|z|<R}
\frac{|u(t,x)-u(t,x+z)|^{p-2}}{|z|^{N+sp}}u(t,x)\,\dd z\,\dd x\,\dd t\\
&\leq M(2M)^{p-2}(b-a)|K|\,\omega_N
\int_\delta^R r^{-1-sp}\dd r
\notag\\
&= M(2M)^{p-2}(b-a)|K|\,\frac{\omega_N}{sp}(\delta^{-sp}-R^{-sp})
<\infty.
\end{aligned}
\label{eq:jump}
\end{align}
This completes the proof.
\end{proof}
\subsection{Existence of a weak solution for the Fokker-Planck-Kolmogorov Equation (FPKE)}
Following R\"ockner--Xie--Zhang \cite{RXZ2020},
we consider the general non-local Fokker--Planck--Kolmogorov
equation (FPKE):
\begin{equation}
\left\{
\begin{aligned}
&\partial_t\mu_t = \mathcal{L}_t^*\mu_t, \\[6pt]
&\text{with }  \mathcal{L}_t := \mathcal{A}_t + \mathcal{B}_t + \mathcal{N}_t
\end{aligned}
\right.
\label{eq:FPE_general}
\end{equation}
where, for $f\in C^2_b(\R^N)$:
\begin{equation}
\left\{
\begin{aligned}
\mathcal{A}_t f(x)
&:= \text{tr}(a(t,x) \cdot \nabla^2 f(x)) = \sum_{i,j=1}^N a_{ij}(t,x)\,\partial^2_{ij}f(x), \\[4pt]
\mathcal{B}_t f(x)
&:= b(t,x) \cdot \nabla f(x) = \sum_{i=1}^N b_i(t,x)\,\partial_i f(x), \\[4pt]
\mathcal{N}_t f(x)
&:= \int_{\R^N\setminus\{0\}} \Theta_f(x;z)\,\nu_{t,x}(\dd z), \quad \Theta_f(x;z):=f(x+z)-\mathbf{1}_{|z|\leq\ell}z\cdot\nabla f(x)
\end{aligned}
\right.
\label{eq:operators}
\end{equation}
with $\{\nu_{t,x}\}_{t\ge0,x\in\R^N}$ a family of Lévy measures over $\R^N$, $a(t,x) = (a_{ij}(t,x))_{1\leq i,j\leq N}$
a symmetric positive semidefinite matrix-valued function in $\mathbb{M}_{N\times N}(\R)$ and $b(t,x)=(b_i(t,x))_{1\leq i\leq N}N$ a vector field in $\R^N$.\\\\
In our particular case, we set
\begin{equation}\label{particular_case}
a(t,x):=\overline{a}(t,x)I,\quad b(t,x):=\overline{b}(t,x)I\quad\mbox{ and }\quad\nu_{t,x}(dz)=\frac{|u(t,x)-u(t,x+z)|^{p-2}}{|z|^{N+sp}}dz,
\end{equation}
where $u$ is the solution of \eqref{eq:PDE_distributional} and
\begin{equation*}
\overline{a}(t,x)= |\nabla u(t,x)|^{p-2}\quad\mbox{ and }\quad\overline{b}(t,x)= \nabla(|\nabla u(t,x)|^{p-2}).
\end{equation*}
Assembling the three parts, $\mathcal{L}_t$ may be defined as follows
\begin{equation}
\mathscr{L}_t f(x)
= |\nabla u|^{p-2}\Delta f
+\nabla(|\nabla u|^{p-2})\cdot\nabla f
+\mathrm{P.V.}\!\int_{\R^N}
\frac{|u(x)-u(x+z)|^{p-2}}{|z|^{N+sp}}
[f(x+z)-f(x)]\dd z.
\label{eq:L_assembly}
\end{equation}
\begin{definition}[Weak solution of \eqref{eq:FPE_general}]
\label{def:weak_FPE}
We call $\mu_t$ a weak solution of the non-local FPE  \eqref{eq:FPE_general}
, if there exists a fixed number $\ell \in \left]0, 1/\sqrt{2}\right]$ such that for any $R>0$
and $t>0$:
\begin{equation*}
\begin{aligned}
&1)\,\,\int_0^t\!\int_{\R^N}\mathbf{1}_{B_R}(x)
\Bigl(|\overline{a}(s,x)|+|\overline{b}(s,x)|+|g_{\nu}(s,x)|\Bigr)\mu_s(\dd x)\,\dd s<\infty,\\[4pt]
&2)\,\,\int_0^t\!\int_{\R^N}
\Bigl(\nu_{s,x}(B^c_{\ell\vee(|x|-R)})
+\mathbf{1}_{B_R}(x)\nu_{s,x}(B_\ell^c)\Bigr)
\mu_s(\dd x)\,\dd s<\infty,
\end{aligned}
\end{equation*}
where $\displaystyle g_{\nu}(t,x):=\int_{B_\ell}|z|^2\nu_{t,x}(dz)$ for each $x\in\R^N$, $\displaystyle \nu_{t,x}\left(E\right):=\int_{E}\nu_{t,x}(dz)$ and $\displaystyle B_\ell := \{z \in \mathbb{R}^d : |z| < \ell\}$. Also, for all $f\in C^2_c(\R^N)$ :
\begin{equation}
\mu_t(f) = \mu_0(f)
+\int_0^t\mu_s(\mathcal{L}_s f)\,\dd s,
\label{eq:weak_FPE}
\end{equation}
where $\displaystyle\mu_t(f):=\int_{\R^N}f(x)\mu_t(\dd x)$
and $\mathcal{L}_s f$ is given by \eqref{eq:L_assembly}.  
\end{definition}
\begin{remark}\quad\\
\begin{enumerate}
\item[a)] Since $\{\nu_{t,x}\}_{t\ge0,x\in\R^N}$ is a family of Lévy measures over $\R^N$ in the sense of Definition \ref{eq:levy_def}, we check that
\begin{equation*}
g_{\nu}(t,x):=\int_{B_\ell}|z|^2\nu_{t,x}(dz)<\infty\quad\mbox{ and }\quad\nu_{s,x}(B^c_{\ell}) < \infty,
\end{equation*}
which guarantees that the integrands in assumptions 1) and 2) are well defined.
\item[b)] Assumptions 1) - 2) are necessary to ensure that $\displaystyle\int_0^t\mu_s(\mathcal{L}_s f)\,\dd s$ is well defined. Indeed, let $f\in C^2_c(\R^N)$ such that $\mbox{supp }f\subset B_R$. In $\R^N\times B_\ell$, if $|x|\ge R+\ell$, then $\nabla f(x)=\overrightarrow{0}$ and $|x+z|\ge|x|-|z|\ge R+\ell-|z|>R$. Therefore, using Taylor's formula, it holds for some $\xi\in(x,x+z)$
\begin{align}\label{un-2}
\begin{aligned}
\left|\Theta_f(x;z)\right|\le\left\|D^2f\right\|_{\infty}|z|^2\mathbb{1}_{\left\{|x|<R+\ell\right\}}(x).
\end{aligned}
\end{align}
In $\R^N\times B_\ell^c$, $\Theta_f(x;z)=f(x+z)\mathbf{1}_{\{|x+z|<R\}}-f(x)\mathbf{1}_{\{|x|<R\}}.$
If $x+z\in B_R$, then $|x|\le|x+z|+|z|\le R+|z|$, that is $|z|\ge|x|-R$. Therefore 
\begin{align}\label{deux-2}
\begin{aligned}
\left|\Theta_f(x;z)\right|\le\|f\|_\infty
\left(
\mathbf{1}_{\{|x|-R<|z|\}}
+
\mathbf{1}_{\{|x|<R\}}
\right).
\end{aligned}
\end{align}
Combining \eqref{un-2}-\eqref{deux-2}, we deduce
\[
\left|\Theta_f(x;z)\right|
\le
\|f\|_\infty
\mathbf{1}_{\{|z|>\ell\}}
\left(
\mathbf{1}_{\{|x|-R<|z|\}}
+
\mathbf{1}_{\{|x|<R\}}
\right)
+
\|\nabla^2 f\|_\infty
\mathbf{1}_{\{|z|\le \ell\}}
|z|^2
\mathbf{1}_{\{|x|<R+\ell\}},
\]
hence
\begin{align*}
&\left|
\int_0^t\mu_s(\mathcal{L}_s f)\,\dd s
\right|\\
&\le\left|
\int_0^t\!\int_{B_R}
\overline{a}(s,x)\,\Delta f(s,x)\,\mu_s(\dd x)\,\dd s\right|+\left|\int_0^t\!\int_{B_R}
\overline{b}(s,x).\nabla f(s,x)\,\mu_s(\dd x)\,\dd s\right|\\
&+\left|\int_0^t\!\int_{\R^N}\int_{\R^N\backslash\{0\}}
\Theta_f(x;z)\,\nu_{s,x}(dz)\mu_s(\dd x)\,\dd s
\right|\\
&\le\|f\|_{C_c^2}
\int_0^t\!\int_{B_R}
\left(\left|\overline{a}(s,x)\right|+\left|\overline{b}(s,x)\right|\right)\,\mu_s(\dd x)\,\dd s\\
&+C\left(\int_0^t
\int_{\mathbb{R}^d}
\left[
\nu_{s,x}\!\left(
B_{\ell\vee(|x|-R)}^c
\right)
+
\mathbf{1}_{B_R}(x)\,
\nu_{s,x}\!\left(B_\ell^c\right)
\right]
\mu_s(dx)\,ds+
\int_0^t
\int_{\mathbb{R}^d}
\mathbf{1}_{B_{R+\ell}}(x)\,
g_{\nu}(s,x)\,
\mu_s(dx)\,ds\right)\\
&<\infty.
\end{align*}
\item[c)] The assumption $\ell \le 1/\sqrt{2}$ is a technical condition imposed (without loss of generality) to ensure the validity of condition (2.2) in \cite[Lemma 2.3]{RXZ2020}.
\end{enumerate}
\end{remark}
\begin{theorem}\label{thm:bridge_complete}
Under assumptions (A)+(B), $u$ is a solution of \eqref{eq:PDE_distributional} in the sense of Definition~\ref{def:23} if and only if $\mu_t:=u(t,\cdot)\dd x$
is a solution of \eqref{eq:FPE_general} in the sense of Definition~\ref{def:weak_FPE}.
\end{theorem}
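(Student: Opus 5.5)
The plan is to prove the two directions separately. Both rest on one observation: with $\mu_t:=u(t,\cdot)\,\dd x$, the identity \eqref{eq:weak_FPE} tested against $f\in C^2_c(\R^N)$ is the same statement as \eqref{eq:23} tested against product test functions $\varphi(t,x)=\theta(t)f(x)$. We therefore verify two things: that the integrability conditions 1)--2) of Definition~\ref{def:weak_FPE} follow from (C1)--(C6) and the bounds of Theorem~\ref{thm:existence_PDE}, and that the weak-in-time formulations can be passed back and forth.

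\textbf{Direction ``$\Rightarrow$''.} Assume $u$ is a distributional solution in the sense of Definition~\ref{def:23}.

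First, the local term. By (C4)--(C5), $\int_0^t\int_{B_R}(|\overline a|+|\overline b|)\,u\,\dd x\,\dd s<\infty$. Near $s=0$ this uses the $L^p(0,T;W^{1,p})$ bound and the estimate \eqref{eq:C0} in its form up to time $0$, which we take from Proposition~\ref{prop:hybrid31} using (A6).

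Second, the jump terms. We use $0\le u\le M$ from Theorem~\ref{thm:existence_PDE} 5), together with the kernel bound of Lemma~\ref{thm:levy}. This gives
\[
g_\nu(s,x)\le (2M)^{p-2}\frac{\omega_N\ell^{2-sp}}{2-sp},\qquad \nu_{s,x}(B_r^c)\le (2M)^{p-2}\frac{\omega_N}{sp}\,r^{-sp}.
\]
Hence $\mathbf 1_{B_R}g_\nu u$ and $\mathbf 1_{B_R}\nu_{s,x}(B_\ell^c)u$ are integrable because $u\in L^\infty$. For the tail term $\nu_{s,x}(B^c_{\ell\vee(|x|-R)})$, the bound is uniform in $x$ (at most the constant with $r=\ell$). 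Integrating it against $u\,\dd x$ gives finiteness by mass conservation, Theorem~\ref{thm:existence_PDE} 6).

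Third, the identity. Take $\varphi(t,x)=\theta(t)f(x)$ in \eqref{eq:23}. The nonlocal part matches $\mathcal N_t$ in \eqref{eq:operators}. Note a sign issue: the compensator term appears in \eqref{eq:23} with a minus sign and in \eqref{eq:L_assembly} as P.V. without the compensator. Since the kernel in $z$ is symmetric only after combining $x$ and $x+z$, we reconcile the forms \eqref{eq:23}, \eqref{eq:operators} and \eqref{eq:L_assembly} by writing the P.V. integral as the limit of integrals over $\{|z|>\varepsilon\}$ and by adding $\pm f(x)$. The mass-conservation computation in Theorem~\ref{thm:existence_PDE} 6) shows the $f(x)\nu$-contribution is harmless. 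This gives $\int_0^\infty\theta'(t)\mu_t(f)\,\dd t+\int_0^\infty\theta(t)\mu_t(\mathcal L_t f)\,\dd t=0$ for all $\theta\in C^1_c((0,\infty))$. Hence $t\mapsto\mu_t(f)$ is absolutely continuous on $(0,\infty)$ with derivative $\mu_t(\mathcal L_tf)$. Letting the left endpoint tend to $0$ uses $u\in C([0,T];L^2)$ from Brezis' theorem (property (P2) gives Lipschitz continuity in $L^2$) and $f\in L^2$, so $\mu_\tau(f)\to\mu_0(f)$. This yields \eqref{eq:weak_FPE}.

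\textbf{Direction ``$\Leftarrow$''.} Starting from \eqref{eq:weak_FPE}, we differentiate in $t$ and test against $\theta(t)$. This gives \eqref{eq:23} for product functions. Linear combinations $\sum\theta_i(t)f_i(x)$ are dense in $C^2_0((0,\infty)\times\R^N)$ for the $C^2$ norm on compact sets. All coefficients are locally integrable by the estimates above, so dominated convergence extends the identity to all $\varphi$. Conditions (C1)--(C6) are properties of $u$ itself; we already know them from the preceding theorem, since $u$ is the solution furnished by Theorem~\ref{thm:existence_PDE}.

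The main obstacle I expect is making the nonlocal terms coincide exactly and controlling them uniformly. The compensator must be reconciled with the principal value and with the cutoff $\mathbf 1_{|z|\le\ell}$ versus $\mathbf 1_{|z|\le1}$; the difference $\int_{\ell<|z|\le1}z\,\nu_{t,x}(\dd z)\cdot\nabla f$ is finite and gets absorbed into the drift, or cancels by the oddness of the truncated kernel after integration in $x$. The other delicate point is integrability near $t=0$, which relies on the weighted second-order estimate of Appendix~B.
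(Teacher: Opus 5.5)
\textbf{Gap: reconciling the nonlocal term.} The gap is in your third step, where you derive \eqref{eq:weak_FPE} from \eqref{eq:23}. As written, these are different equations, and the devices you invoke do not reconcile them.

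First, \eqref{eq:23} subtracts the jump term, while \eqref{eq:weak_FPE} adds it through \eqref{eq:L_assembly}. A global sign cannot be removed by truncating to $\{|z|>\varepsilon\}$ or by adding and subtracting $f(x)$. The mass-conservation computation of Theorem~\ref{thm:existence_PDE}, 6), only shows that the jump term vanishes for test functions tending to a constant. It gives no control of $\int u(x)f(x)\,\nu_{t,x}(B_\varepsilon^c)\,\dd x$ for general $f$, so the $f(x)\nu$-contribution is not harmless. The missing $-f(x)$ in $\Theta_f$ in \eqref{eq:operators} is just a typo.

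Second, $\nu_{t,x}$ is not invariant under $z\mapsto-z$, since $|u(x)-u(x+z)|\neq|u(x)-u(x-z)|$ in general. So the compensated integrand of \eqref{eq:23} and the principal value in \eqref{eq:L_assembly} differ by the drift $\nabla f(x)\cdot\mathrm{P.V.}\!\int_{|z|\le 1}z\,\nu_{t,x}(\dd z)$. Set $k(x,y):=|u(x)-u(y)|^{p-2}|x-y|^{-N-sp}$. After integrating this drift against $u$ and symmetrizing, it equals
\[
\frac12\iint_{|x-y|\le 1}k(x,y)\,(y-x)\cdot\bigl(u(x)\nabla f(x)-u(y)\nabla f(y)\bigr)\,\dd y\,\dd x ,
\]
which is not zero in general. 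It does not cancel by oddness, and absorbing it into the drift would change $\mathcal L_t$.

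\textbf{How to repair it.} Bypass \eqref{eq:23} and start, as the paper does, from the PDE itself, i.e.\ from \eqref{eq:weak_form} with a test function $f\in C^2_c(\R^N)$. On the symmetric truncation $\{|x-y|>\varepsilon\}$ every piece is finite, and the symmetry $k(x,y)=k(y,x)$ gives
\[
\frac12\iint_{|x-y|>\varepsilon}k(x,y)\bigl(u(x)-u(y)\bigr)\bigl(f(x)-f(y)\bigr)\,\dd y\,\dd x
=-\int_{\R^N}u(x)\int_{|z|>\varepsilon}\bigl(f(x+z)-f(x)\bigr)\,\nu_{t,x}(\dd z)\,\dd x .
\]
The left side converges as $\varepsilon\to 0$ by dominated convergence, using H\"older's bound by $[u]_{W^{s,p}}^{p-1}[f]_{W^{s,p}}$, which is finite because $p-sp>0$. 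This produces exactly the P.V.\ operator of \eqref{eq:L_assembly} with a plus sign. It also shows that \eqref{eq:23} must be corrected: flip the sign, and add the drift above if the compensated form is kept. Write this symmetrization out explicitly, because the paper's one-line computation writes $\varphi(x)-\varphi(x+z)$ and so has the same sign slip.

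\textbf{What matches, and two smaller points.} Your treatment of conditions 1)--2) coincides with the paper's. Your passage to $t=0$ through the $L^2$-Lipschitz continuity from (P2) is more careful than the paper's.
\begin{itemize}
\item Your converse direction obtains (C1)--(C6) only by declaring $u$ to be the solution of Theorem~\ref{thm:existence_PDE}. That makes it a restatement of the preceding theorem rather than a proof. Rebuilding \eqref{eq:23} from \eqref{eq:weak_FPE} also runs into the same sign and drift mismatch.
\item Proposition~\ref{prop:hybrid31} is an interior estimate in time and does not use (A6). So taking \eqref{eq:C0} down to $t=0$ needs an extra argument. For instance, you could rerun the difference-quotient estimate without the time cutoff; this adds $\int|\nabla u_0|^2\zeta^2\,\dd x$, which is finite under (A5)--(A6). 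The paper makes the same leap at this point.
\end{itemize}
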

\begin{proof} Set $\mu_t:=u(t,.)dx$. According to Definition \ref{def:weak_FPE}, it suffices to prove that $u$ satisfies properties 1) - 3) and equality \eqref{eq:weak_FPE}.\\\\
$\bullet$ \textbf{Proof of 1) : } Using H\"older's inequality, Theorem Theorem \ref{thm:existence_PDE} and Lemma ~\ref{thm:levy}, we get:
\begin{align}
\begin{aligned}
&\int_0^t\!\int_{\R^N}\mathbf{1}_{B_R}(x)
\Bigl(|a_{ij}(s,x)|+|b_i(s,x)|+|g_{\nu}(s,x)|\Bigr)\mu_s(\dd x)\,\dd s\\
&=\int_0^t\!\int_{\R^N}\mathbf{1}_{B_R}\left(|\nabla u|^{p-2}+\nabla(|\nabla u|^{p-2})+\int_{B_\ell}|z|^2\nu_{t,x}(dz)\right)
u\,\dd x\,\dd s\\
&\leq \|u\|_{\infty}\int_0^t\!\int_{B_R}\left(|\nabla u|^{p-2}+(p-2)|\nabla u|^{p-3}|D^2 u|+\int_{B_\ell}(1\wedge|z|^2)\,\nu_{t,x}(\dd z)\right)\dd x\,\dd s\\
&\leq M\left(|B_R|^{2/p}
\int_0^t\|\nabla u(s)\|_{L^p}^{p-2}\dd s+C_0^{1/2}
\Bigl(|B_R|^{4/p}\int_0^t\!\|\nabla u(s)\|_{L^p}^{p-4}\,\dd s
\Bigr)^{1/2}+\int_{\R^N\setminus\{0\}}(1\wedge|z|^2)\,\nu_{t,x}(\dd z)\right)\\
&<\infty,
\end{aligned}
\label{eq:a_bound}
\end{align}
where $M>0$ is a constant large enough and $\displaystyle C_0:=\int_0^t\int_{B_R}|\nabla u|^{p-2}|D^2u|^2dxds<\infty$ according to Proposition \ref{thm:existence_PDE} in the Appendix.\\\\
$\bullet$ \textbf{Proof of 2) : } From Theorem \ref{thm:existence_PDE}, since $\displaystyle\int_{\R^N} u(x)\dd x=1$, we get :
\begin{align}
\begin{aligned}
&\int_0^t\!\int_{\R^N}
\Bigl(\nu_{s,x}(B^c_{\ell\vee(|x|-R)})
+\mathbf{1}_{B_R}(x)\nu_{s,x}(B_\ell^c)\Bigr)
\mu_s(\dd x)\,\dd s\\
&\le2\int_0^t\!\int_{\R^N}\nu_{s,x}(B^c_{\ell})\mu_s(\dd x)\,\dd s=2\int_0^t\!\int_{\R^N}\left(\int_{|z|>\ell}
\frac{|u(s,x)-u(s,x+z)|^{p-2}}{|z|^{N+sp}}\dd z\right)\,u\,\dd x\,\dd s\\
&\le2(2M)^{p-2}\omega_N\int_0^t\!\int_{\R^N}\left(\int_{|z|>\ell}^{\infty}
\frac{1}{r^{1+sp}}\dd r\right)\,u\,\dd x\,\dd s=\frac{2(2M)^{p-2}\omega_N}{sp\,\ell^{sp}}\int_0^t\!\int_{\R^N}u\,\dd x\,\dd s\\
&\le\frac{2(2M)^{p-2}\omega_N}{sp\,\ell^{sp}}\cdot t<\infty.
\end{aligned}
\label{eq:cond2_done}
\end{align}
$\bullet$ \textbf{Proof of \eqref{eq:weak_FPE} : }Multiplying \eqref{eq:PDE_distributional} by $\varphi\in C^\infty_c(\R^N)$ and using \eqref{eq:L_assembly}, we get
\begin{align*}
\frac{d}{dt}\int_{\R^N}\varphi\,u\,\dd x&=\int_{\R^N}|\nabla u|^{p-2}\Delta\varphi\,u\,\dd x
+\int_{\R^N}\nabla(|\nabla u|^{p-2})\cdot\nabla\varphi\,u\,\dd x\\
&+\int_{\R^N}\!\left(\int_{\R^N\setminus\{0\}}
\frac{|u(x)-u(x+z)|^{p-2}(\varphi(x)-\varphi(x+z))}
{|z|^{N+sp}}\dd z\right)\,u(x)\,\dd x\\
&=\int_{\R^N}\mathcal{L}_t\varphi(x)\,u(x)\,\dd x
\end{align*}
Integrating the last equality from $0$ to $t$, and setting $\mu_t:=u(t,.)dx$, we deduce that
\begin{equation*}
\mu_t(f) = \mu_0(f)
+\int_0^t\mu_s(\mathcal{L}_s f)\,\dd s.
\end{equation*}
This completes the proof.
\end{proof}
\section{From FPKE \eqref{eq:FPE_general} to Martingale Problem \eqref{eq:MP}}
Let $\mathbb{D}:=D([0,\infty);\mathbb{R}^N)$ the space of functions $\omega:\mathbb{R}_{+}\rightarrow\mathbb{R}^N$ that are right-continuous with left limits, called càdlàg functions, and $X_s:\mathbb{D}\rightarrow\mathbb{R}^N$ the canonical process defined by 
\begin{align*}
    X_s(\omega)=\omega(s),\qquad s\in\mathbb{R}_{+}.
\end{align*}
Let $\phi\in C^2_c(\R^N)$ a test function and $x\in\mathbb{R}^N$. The \textit{Martingale Problem} associated to \eqref{eq:FPE_general} is the following
\begin{align}\label{eq:MP}
    \left\{\begin{aligned}
&\mathbb{E}_{\mathbb{Q}}\left[\phi(X_t)-\phi(X_s)-\int_s^t\mathcal{L}_r\phi(X_r)dr\,|\,\mathcal{F}_s\right]=0&&\\
        &\mathbb{Q}\circ X_s^{-1}=\mu_0&&\mbox{ for all }0\le s\le t,\quad t\ge0,
    \end{aligned}\right.
\end{align}
where $\mathcal{F}_s=\underset{q>s}{\bigcap}\mathcal{F}_q^0(\mathbb{D})$, $\mathcal{F}_q^0(\mathbb{D})=\sigma\left(X_r\,;\,0\le r\le q\right)$ being the smallest sigma-algebra with respect to which all the random variables $X_r$ ($0\le r\le q$) are measurable, and $\mathcal{L}_s$ is the general diffusion operator with jumps defined in \eqref{eq:L_assembly} with initial law $\mu_0$ at time $t=0$. 
\begin{remark} 
\begin{enumerate}
\item[a)] The family $(\mathcal{F}_s)_{s\ge 0}$ represents the information that accumulates over time. Here,
\[\mathcal{F}_t^0=\sigma(X_r,\ 0\le r\le t)\]
contains all the information provided by the trajectory up to time $t$. It is called a filtration since, for any $s\le t$, knowing the trajectory up to time $t$ necessarily provides at least as much information as knowing it only up to time $s$. Therefore,
\[\mathcal{F}_s^0 \subseteq \mathcal{F}_t^0,\]
 which implies that $(\mathcal{F}_s)_{s\ge 0}$ is an increasing family of sigma-algebras:
\[
\mathcal{F}_s \subseteq \mathcal{F}_t
\qquad \text{whenever } s\le t.
\]
\item[b)] Set
\begin{align}\label{M_t}
\displaystyle\mathcal{M}_t^{\phi}(X_t):=\phi(X_t)-\phi(X_0)-\int_0^t\mathcal{L}_s\phi(X_s)ds\quad\mbox{ for all }t\ge0.
\end{align}
Since $\displaystyle\mathcal{M}_s^{\phi}(X_t)$ is $\mathcal{F}_s$-measurable and integrable, then
\[\mathbb{E}\left[\mathcal{M}_s^{\phi}(X_t) \mid \mathcal{F}_s\right] = \mathcal{M}_s^{\phi}(X_t).\]
Consequently, 
\begin{align*}
0=\mathbb{E}_{\mathbb{Q}}\left[\phi(X_t)-\phi(X_s)-\int_s^t\mathcal{L}_r\phi(X_r)dr\,|\,\mathcal{F}_s\right]=\mathbb{E}_{\mathbb{Q}}\left[\mathcal{M}_t^{\phi}(X_t)-\mathcal{M}_s^{\phi}(X_s)\,|\,\mathcal{F}_s\right]
\end{align*} 
is equivalent to
\begin{align*}
\mathbb{E}_{\mathbb{Q}}\left[\mathcal{M}_t^{\phi}(X_t)\,|\,\mathcal{F}_s\right]=\mathcal{M}_s^{\phi}(X_s).
\end{align*}
In other words, knowing the entire history of the process up to time $s$, the best prediction of the future value $\mathcal{M}_t^{\phi}(X_t)$ is simply its current value $\mathcal{M}_s^{\phi}(X_s)$.
\item[c)] Thanks to the $\mathbb E_{\mathbb Q}\left[\mathbb E_{\mathbb Q}\left[Z|\;
\mathcal F_s
\right]\right]=\mathbb E_{\mathbb Q}\left[Z\right]$, starting from
\[
\mathbb E_{\mathbb Q}\left[
\phi(X_t)-\phi(X_s)
-\int_s^t \mathcal L_r\phi(X_r)\,dr
\;\middle|\;
\mathcal F_s
\right]=0.
\]
and taking the expectation on both sides gives
\[
\mathbb E_{\mathbb Q}[\phi(X_t)]
-
\mathbb E_{\mathbb Q}[\phi(X_s)]
=
\int_s^t
\mathbb E_{\mathbb Q}
\bigl[
\mathcal L_r\phi(X_r)
\bigr]
\,dr,
\]
called Dynkin's formula. Since $\mu_t=\mathbb Q\circ X_t^{-1}$, we have
\[
\mathbb E_{\mathbb Q}[\phi(X_t)]
=
\int_{\mathbb R^d}
\phi(x)\,\mu_t(dx)\quad\mbox{ and }\quad\mathbb E_{\mathbb Q}
\bigl[
\mathcal L_r\phi(X_r)
\bigr]
=
\int_{\mathbb R^d}
\mathcal L_r\phi(x)\,\mu_r(dx).
\]
Therefore,
\[
\int_{\mathbb R^d}\phi(x)\,\mu_t(dx)
-
\int_{\mathbb R^d}\phi(x)\,\mu_s(dx)
=
\int_s^t
\int_{\mathbb R^d}
\mathcal L_r\phi(x)\,\mu_r(dx)\,dr,
\]
which is the weak formulation of the forward Kolmogorov equation \eqref{eq:FPE_general}.
\end{enumerate}
\end{remark}
\begin{definition}[Martingale solution, see Definition~1.3 of \cite{RXZ2020}]
    A probability measure $\mathbb{Q}\in \mathcal{P}(\mathbb{D})$ is called a $\mathcal{F}_t$-\textit{martingale
solution} of \eqref{eq:MP} if $\mathbb{Q}$ solves \eqref{eq:MP}, that is 
\begin{enumerate}
    \item $\mathbb{Q}(X_t=X_s,t\in[0,s])=1$ and $\mathbb{Q}\circ X_s^{-1}=\mu_0$;
    \item For every $\phi\in C^2_c(\R^N)$ and any $x\in\mathbb{R}^N$,
\begin{align}\label{martingale}
\mathbb{E}_{\mathbb{Q}}\left[\phi(X_t)-\phi(X_s)-\int_s^t\mathcal{L}_r\phi(X_r)dr\,|\,\mathcal{F}_s\right]=0\quad\mbox{ for all }0\le s\le t.
\end{align}
\end{enumerate}
The process $\mathcal{M}_t^{\phi}(X_t)$ defined in \eqref{M_t} is called a $(\mathcal{F}_t)$-\textit{martingale} under $\mathbb{Q}$.
\label{eq:mart_def}
\end{definition}
\subsection{New version of superposition principle for nonlinear FPKE with jumps}
Before proving the existence of a solution for \eqref{eq:MP}, it will be first needed to prove a new superposition principle for general FPKE of the type \eqref{eq:FPE_general} (without \eqref{particular_case}), under the following conditions
\begin{enumerate}
\item[(H1)] There exists a density function $u$ such that $\mu_t=u(t,.)dx$;
\item[(H2)] $\displaystyle\int_a^b\!\int_{\R^N}
\Bigl(\frac{|a_t(x)|}{1+|x|^2}+\frac{|b_t(x)|}{1+|x|}\Bigr)\,u(t,x)\dd x\,\dd t\le C(b-a)^{\alpha}$, for some $\alpha,C>0$;
\item[(H3)] $\displaystyle\sup_{t,x}\Bigl(
\frac{g_t^\nu(x)}{1+|x|^2}
+h_t^\nu(x)\Bigr)<\infty$, $\forall R>0$,
\end{enumerate}
where $\displaystyle g_{\nu}(t,x):=\int_{B_\ell}|z|^2\nu_{t,x}(dz)$ and $\displaystyle h_t^\nu(x)
:=\int_{B_{\ell}^c}\log\left(1+\frac{|z|}{(1+|x|)}\right)\nu_{t,x}(\dd z)$.
\begin{theorem}[New superposition principle]
\label{thm:15prime_weak}
Under conditions (H), for any weak solution
$(\mu_t)_{t\geq 0}$ of the non-local FPKE \eqref{eq:FPE_general} in the sense of
Definition \ref{def:weak_FPE}, there exists
$\QQ\in\mathcal{M}_0^{\mu_0}(\mathcal{L}_t)$ such that \eqref{martingale} holds and
$\mu_t=\QQ\circ X_t^{-1}$ for all $t\geq 0$.
\end{theorem}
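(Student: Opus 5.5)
The plan is to reduce Theorem~\ref{thm:15prime_weak} to the superposition principle of R\"ockner--Xie--Zhang \cite[Theorem~1.5]{RXZ2020}. That result yields a martingale solution $\QQ\in\mathcal{M}_0^{\mu_0}(\mathcal{L}_t)$ with one-dimensional marginals $\mu_t$ for any weak solution of \eqref{eq:FPE_general} satisfying a global moment condition of the form
\begin{equation*}
\int_0^T\!\int_{\R^N}\Bigl(\frac{|a_t(x)|+g^\nu_t(x)}{1+|x|^2}+\frac{|b_t(x)|}{1+|x|}+h^\nu_t(x)\Bigr)\mu_t(\dd x)\,\dd t<\infty .
\end{equation*}
The work is therefore to check that the hypotheses (H1)--(H3), together with the local integrability conditions 1)--2) of Definition~\ref{def:weak_FPE}, imply this condition. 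Once that is done, the conclusion of \cite{RXZ2020} is exactly \eqref{martingale} together with $\QQ\circ X_t^{-1}=\mu_t$.

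\textbf{Step 1: regularization.} Following \cite{RXZ2020}, I will mollify $\mu_t$ in space, $\mu^\varepsilon_t:=\mu_t*\rho_\varepsilon$, and define the regularized coefficients
\begin{equation*}
a^\varepsilon_t:=\frac{(a_t\mu_t)*\rho_\varepsilon}{\mu^\varepsilon_t},\qquad b^\varepsilon_t:=\frac{(b_t\mu_t)*\rho_\varepsilon}{\mu^\varepsilon_t},
\end{equation*}
and similarly $\nu^\varepsilon_{t,x}$ by averaging the kernels against $\rho_\varepsilon(x-\cdot)\mu_t$. Because of (H1), these quotients are well defined wherever $u>0$. Then $\mu^\varepsilon$ solves the FPKE with the regularized operator $\mathcal{L}^\varepsilon_t$. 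By Jensen's inequality, the bounds in (H2) and (H3) pass to the regularized coefficients uniformly in $\varepsilon$, using $1+|x|\asymp 1+|y|$ for $|x-y|\le\varepsilon$. For the smooth problem, well-posedness of the martingale problem gives $\QQ^\varepsilon$ with marginals $\mu^\varepsilon_t$.

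\textbf{Step 2: tightness in $\mathbb{D}$.} I will use Aldous' criterion with the Lyapunov function $V(x)=\log(1+|x|^2)$, truncated. The growth of $\mathcal{L}^\varepsilon_t V$ is controlled by $\frac{|a|}{1+|x|^2}+\frac{|b|}{1+|x|}+\frac{g^\nu}{1+|x|^2}+h^\nu$. This is exactly why (H3) involves $h^\nu$: the logarithm controls the large jumps. Integrating against $\mu_t$ and using the H\"older-in-time bound $C(b-a)^\alpha$ from (H2), I expect to obtain uniform control of
\begin{equation*}
\EE_{\QQ^\varepsilon}\Bigl[\,\bigl|V(X_{\tau+\delta})-V(X_\tau)\bigr|\wedge 1\Bigr]\lesssim \delta^{\alpha}+\delta
\end{equation*}
for stopping times $\tau$, together with compact containment. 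This gives tightness of $\{\QQ^\varepsilon\}$.

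\textbf{Step 3: identification of the limit.} For any limit point $\QQ$, the marginals are $\mu_t$, since $\mu^\varepsilon_t\to\mu_t$. It remains to show that $\mathcal{M}^\phi_t$ is a martingale, i.e.\ to pass to the limit in $\EE_{\QQ^\varepsilon}[(\mathcal{M}^{\phi,\varepsilon}_t-\mathcal{M}^{\phi,\varepsilon}_s)G]$ for continuous bounded $\mathcal{F}_s$-measurable $G$. I expect this step to be the main obstacle, because $\mathcal{L}_t\phi$ is only measurable in $x$ (e.g.\ $\nabla(|\nabla u|^{p-2})$). The approach I will take first is the standard trick from \cite{RXZ2020}. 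Approximate $\mathcal{L}_r\phi$ in $L^1(\mu_r\,\dd r)$ on $[0,T]\times\R^N$ by a continuous bounded $F$, which is possible by condition 1) of Definition~\ref{def:weak_FPE} and Remark b), where $\Theta_f$ is dominated. Since the time marginals of $\QQ^\varepsilon$ are $\mu^\varepsilon_r$, the error satisfies
\begin{equation*}
\EE_{\QQ^\varepsilon}\int_s^t|\mathcal{L}^\varepsilon_r\phi-F|(X_r)\,\dd r\le\int_s^t\!\int_{\R^N}|\mathcal{L}_r\phi-F|\,\dd\mu_r\,\dd r+o(1).
\end{equation*}
This error is therefore uniformly small, and the continuous part passes to the limit by weak convergence.

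Finally, the tails $|x|>R$ are handled with condition 2) and the estimate of Remark b), which gives the martingale identity \eqref{martingale} for every $\phi\in C^2_c(\R^N)$.
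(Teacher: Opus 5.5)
Your proposal is correct, but it takes a different route from the paper. The paper does not treat (H) as a special case of condition (1.18) of \cite{RXZ2020}. It asserts that (H) is a different condition and re-proves only condition (ii) of Aldous' criterion. To do so it recentres the Lyapunov function at the starting point, $V_y(x)=\psi(\log(1+|x-y|^2))$, uses the strong Markov property of the regularized laws $\PP^\varepsilon_{s,y}$, and feeds the time-H\"older bound of (H2) and the supremum bound of (H3) into RXZ's It\^o/Lyapunov estimates to get a modulus in $\delta$.

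Your reduction shows that none of this is needed. The check runs as follows:
\begin{itemize}
\item take $(a,b)=(0,T)$ in (H2);
\item if the drift term of (1.18) is the one-sided one, use $\langle x,b_t(x)\rangle^+/(1+|x|^2)\le 2|b_t(x)|/(1+|x|)$;
\item integrate the supremum in (H3) against the probability measures $\mu_t$, which gives at most $T\sup_{t,x}(\cdot)$.
\end{itemize}
This yields the integrability condition of \cite[Theorem~1.5]{RXZ2020} on every $[0,T]$, so the statement is a direct corollary. You should write this check out explicitly, since it is the entire proof.

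The reduction also avoids the weak point of the paper's estimate. There, (H2), which concerns the given marginals $u$, is applied to the laws $u^{\varepsilon,s,y}$ of the regularized process restarted at $(s,y)$, and (H2) does not control those. What the paper's route would buy, if completed, is an explicit rate in Aldous' criterion, which the statement does not require.

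Once you cite RXZ, your Steps 1--3 are superfluous. Taken as a self-contained argument, Step 2 would not work as written, for two reasons:
\begin{itemize}
\item $V=\log(1+|x|^2)$ is radial, so smallness of $V(X_{\tau+\delta})-V(X_\tau)$ says nothing about $|X_{\tau+\delta}-X_\tau|$. You would have to recentre $V$ at $X_\tau$ or use a separating family of $f\in C^2_c(\R^N)$.
\item The H\"older modulus in (H2) refers to deterministic intervals. With stopping times, what is actually used is uniform integrability of $\mathcal{L}^\varepsilon_r f(X_r)$ under $\dd r\otimes\QQ^\varepsilon$ (de la Vall\'ee-Poussin plus Jensen), which needs only global integrability.
\end{itemize}
Also, in Step 1 the quotients are defined where $\mu^\varepsilon_t>0$. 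RXZ ensure this by using a mollifier of full support; (H1) plays no role there.
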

\begin{remark}
The novelty in Theorem \ref{thm:15prime_weak} lies essentially in assumptions (H) which are slightly different than condition (1.18) in \cite[Theorem 1.5]{RXZ2020}. However its proof follows exactly the same steps. \begin{enumerate}
\item We start from a family of measures \((\mu_t)_{t \geq 0}\) which is a weak solution of the non-local Fokker--Planck equation associated with the operator \(L_t\). Since the coefficients of \(L_t\) may be merely measurable and have low regularity, one does not construct directly a process associated with \(L_t\). Instead, one first regularizes both the solution \(\mu_t\) and the coefficients of the operator. This yields a family of smooth measures \((\mu_t^\varepsilon)\) and a regularized operator \(L_t^\varepsilon\). By construction, \(\mu_t^\varepsilon\) solves the regularized Fokker--Planck equation
\[
\partial_t \mu_t^\varepsilon = (L_t^\varepsilon)^* \mu_t^\varepsilon .
\]
\item Since \(L_t^\varepsilon\) is now sufficiently regular, one can apply the result obtained in the regular case. Thus, one obtains a probability measure \(Q^\varepsilon\) on the space of c\`adl\`ag paths such that the canonical process solves the martingale problem associated with \(L_t^\varepsilon\), and whose time marginals are exactly \(\mu_t^\varepsilon\):
\[
Q^\varepsilon \circ X_t^{-1} = \mu_t^\varepsilon .
\]
\item Next, one proves that the family \((Q^\varepsilon)_\varepsilon\) is tight, which means that it does not escape to infinity in the path space. By Aldous' criterion (see \cite{JS1987}
or \cite[p.~356]{RXZ2020}), it suffices to check
the following two conditions:
\begin{itemize}
\item[(i)] For any $T > 0$, it holds that
\[
\lim_{N \to \infty} \sup_\varepsilon \QQ^\varepsilon
\left( \sup_{t \in [0,T]} |X_t| > N \right) = 0.
\]
\item[(ii)] For any $T, \delta_0 > 0$ and stopping time
$\tau < T - \delta_0$, it holds that
\[
\lim_{\delta \to 0} \sup_\varepsilon \sup_\tau
\QQ^\varepsilon \left( |X_{\tau+\delta} - X_\tau|
> \lambda \right) = 0,
\quad \forall \lambda > 0.
\]
\end{itemize}
\textbf{This is precisely where we intervene in the proof below, by proving that (ii) holds under assumptions (H).} For the proof of (i), see \cite[Lemma~3.4]{RXZ2020}.
\item Hence, one can extract a convergent subsequence, still denoted by \(Q^\varepsilon\), which converges weakly to a limiting probability measure \(Q\). The convergence of the marginals \(\mu_t^\varepsilon \to \mu_t\) then allows one to identify the time marginals of \(Q\). Namely, one obtains
\[
Q \circ X_t^{-1} = \mu_t .
\]
\item It remains to verify that \(Q\) indeed solves the martingale problem associated with the original operator \(L_t\), and not merely with an approximate limit of the operators \(L_t^\varepsilon\). To this end, one passes to the limit in the martingale identity satisfied by \(Q^\varepsilon\). The local terms are handled as in the proofs of Figalli and Trevisan; the non-local term, however, requires a fine approximation of the L\'evy kernel. Once this passage to the limit has been carried out, one obtains that, for every test function \(f\),
\[
f(X_t)-f(X_0)-\int_0^t L_s f(X_s)\,ds
\]
is a martingale under \(Q\). In other words,
\[
Q \in M^{\mu_0}_0(L_t).
\]
Thus, the weak solution \((\mu_t)\) of the Fokker--Planck equation \eqref{eq:FPE_general} is represented as the family of time marginals of a solution to the martingale problem \eqref{eq:MP}.
\end{enumerate}
\end{remark}
\begin{proof}[Proof of Theorem \ref{thm:15prime_weak}]As explained in the earlier remark, we only need to prove the validity of property (ii) in Aldous' criterion under assumptions (H) following \cite[Lemma~3.4]{RXZ2020}.\par\bigskip\noindent
In the proof of \cite[Lemma~3.4]{RXZ2020}, it has been proved the following inequalities
\begin{enumerate}
\item $\displaystyle\mathbb{E}_{\mathbb{Q}^{\varepsilon}}\left[\sup_{0\le s\le T}V(X_s)^{1/2}\right]\le C$,
\end{enumerate}
where $V(x)=\psi(\log\left(1+|x|^2\right))$. Now, set $V_y(x)=\psi(\log\left(1+|x-y|^2\right))$ and notice that $V(x)=V_0(x)$ and $V_y(y)=0$. Since $$|Z_s|>a
\quad\Longrightarrow\quad
V_y(Z_s)^{1/2}>\psi\!\left(\log(1+a^2)\right)^{1/2},$$ then 
\begin{enumerate}
\item[2.] $\displaystyle\mathbb{Q}^{\varepsilon}\left(|Z_s|>a\right)\le\mathbb{Q}^{\varepsilon}
\left(V_y(Z)^{1/2}>\psi\!\left(\log(1+a^2)\right)^{1/2}\right)$.
\end{enumerate}
Also, from Markov's inequality and the strong Markov property, we have
\begin{enumerate}
\item[3.] $\mathbb{Q}^{\varepsilon}\left(V(Z)^{1/2}>\psi\!\left(\log(1+a^2)\right)^{1/2}\right)\le\frac{\mathbb{E}_{\mathbb{Q}^{\varepsilon}}
\left[V(Z)^{1/2}\right]}{\psi\!\left(\log(1+a^2)\right)^{1/2}}$,
\item[4.] $\mathbb{Q}^{\varepsilon}
\bigl(|X_{\tau+\delta}-X_{\tau}|>\lambda\bigr)=\mathbb{E}_{\mathbb{Q}^{\varepsilon}}
\left[\mathbb{P}^{\varepsilon}_{\tau,X_{\tau}}
\bigl(|X_{\tau+\delta}-X_{\tau}|>\lambda\bigr)\right]$,
\end{enumerate}
where $\mathbb{P}_{s,y}^{\varepsilon}$ denotes the law of the process starting
from $y$ at time $s$. In the next, set $y:=X_s$. Since $V_{y}(X_s)=0$, from Ito's formula, \cite[Lemma 2.3]{RXZ2020} inequality (2.2), \cite[Lemma 3.5]{RXZ2020} and the stochastic Gronwall inequality \cite[Lemma 2.4]{RXZ2020}, we check that
\begin{enumerate}
\item[a)]$\displaystyle V_y(X_t)= V_y(X_s)+ \int_s^t\mathcal{L}_r^\varepsilon V_y(X_r)\,\dd r+ M_t^\varepsilon=\int_s^t\mathcal{L}_r^\varepsilon V_y(X_r)\,\dd r+ M_t^\varepsilon$,
\item[b)] $\displaystyle\mathcal{L}_t^\varepsilon V_y(x)
\le 2 \left(
\frac{|a_t^\varepsilon(x)|
+ \langle x-y, b_t^\varepsilon(x) \rangle^+
+ g_t^{\nu^\varepsilon}(x)}{1+|x-y|^2}
+ 2H_t^{\nu^\varepsilon}(x,y) \right)$,
\item[c)] $H_t^{\nu^\varepsilon}(x,y)\le2(1+|y|)h_t^{\nu}(x)$,
\item[d)] $\displaystyle\EE_{\PP_{s,y}^\varepsilon}\left[\left(\sup_{q\in[s,t]}V_y(X_q)\right)^{1/2}\right]\le \left(C\EE_{\PP_{s,y}^\varepsilon}\left[\sup_{q\in[s,t]}\int_s^q\mathcal{L}_t^\varepsilon V_y(X_r)dr\right]\right)^{1/2}$
\item[]\hspace{4.4cm}$\displaystyle\le \left(C\EE_{\PP_{s,y}^\varepsilon}\left[\int_s^t\left|\mathcal{L}_t^\varepsilon V_y(X_r)\right|dr\right]\right)^{1/2}$,
\end{enumerate}
where $M_t^\varepsilon$ is a local Martingale and $\displaystyle H_t^{\nu^{\epsilon}}(x)
:=\int_{B_{\ell}^c}\log\left(1+\frac{|z|}{(1+|x|)}\right)\nu_{t,x}^{\epsilon}(\dd z)$.
Consequently, using the Transfer theorem,
\begin{align}\label{new}
\begin{aligned}
\EE_{\PP_{s,y}^\varepsilon}\left[\sup_{q\in[s,s+\delta]}V_y(X_{q})^{1/2}\right]&\le2\left(\EE_{\PP_{s,y}^\varepsilon}\left[\int_s^{s+\delta}\!\frac{|a_t^\varepsilon(X_{r})|+ |\langle x-y, b_t^\varepsilon(x) \rangle^+|}{1+\left|X_{r}-y\right|^2}
dr\right]\right.\\
&\left.+\EE_{\PP_{s,y}^\varepsilon}\left[\int_s^{s+\delta}\!\frac{\left|g_t^{\nu^\varepsilon}(X_{r})\right|}{1+\left|X_{r}-y\right|^2}
+ 2H_t^{\nu^\varepsilon}(X_{r},y)dr\right]\right)^{1/2}\\
&\le 2\left(\sup_{q\in[s,s+\delta]}\int_s^{q}\!\int_{\R^N}
\left[\frac{|a_t^\varepsilon(x)|}{1+\left|x-y\right|^2}+\frac{|b_t^\varepsilon(x)|}{1+\left|x-y\right|}\right]\,u^{\varepsilon,s,y}(r,x)\,\dd x\,\dd r\right.\\
&\left.+\EE_{\PP_{s,y}^\varepsilon}\left[\int_s^{s+\delta}\!C\frac{1+\left|X_{s+\delta}\right|^2}{1+\left|X_{s+\delta}-y\right|^2}
+ 4(1+|y|)h_t^{\nu}(X_{s+\delta})dr\right]\right)^{1/2}\\
&\le 2C\left(\delta^{\alpha}+\delta(1+|y|^2)\right)^{1/2}\le C\left(\delta^{\alpha}+\delta\right)(1+|y|).
\end{aligned}
\end{align}
Therefore, regrouping 1.-4. and \eqref{new}, for every stopping time $\tau$, every $\delta>0$, every $\lambda>0$, and every $R>0$,
\begin{align*}
\begin{aligned}
\mathbb{Q}^{\varepsilon}\bigl(|X_{\tau+\delta}-X_{\tau}|>\lambda\bigr)
&=\mathbb{E}_{\mathbb{Q}^{\varepsilon}}\left[\mathbb{P}^{\varepsilon}_{\tau,X_{\tau}}\bigl(|X_{\tau+\delta}-X_{\tau}|>\lambda\bigr)\right]
\\
&\le\mathbb{Q}^{\varepsilon}\bigl(\mathbf{1}_{\{|X_{\tau}|> R\}}\bigr)+\mathbb{E}_{\mathbb{Q}^{\varepsilon}}\left[\mathbf{1}_{\{|X_{\tau}|\le R\}}\,\mathbb{P}^{\varepsilon}_{\tau,X_{\tau}}\bigl(|X_{\tau+\delta}-X_{\tau}|>\lambda\bigr)\right]
\\
&\le\mathbb{Q}^{\varepsilon}\left(|X_s|>R\right)+\mathbb{E}_{\mathbb{Q}^{\varepsilon}}\left[\mathbf{1}_{\{|X_{\tau}|\le R\}}\,\mathbb{P}^{\varepsilon}_{\tau,X_{\tau}}\bigl(|X_{\tau+\delta}-X_{\tau}|>\lambda\bigr)\right]
\\
&\le\mathbb{Q}^{\varepsilon}\left(V(X_s)^{1/2}>\psi\!\left(\log(1+R^2)\right)^{1/2}
\right)\\
&+\mathbb{E}_{\mathbb{Q}^{\varepsilon}}\left[\mathbf{1}_{\{|X_{\tau}|\le R\}}\,\mathbb{P}^{\varepsilon}_{\tau,X_{\tau}}\left(V(X_s)^{1/2}>\psi\!\left(\log(1+\lambda^2)\right)^{1/2}\right)\right]\\
&\le\frac{\mathbb{E}_{\mathbb{Q}^{\varepsilon}}\left[V(X_s)^{1/2}\right]}{\psi\!\left(\log(1+R^2)\right)^{1/2}}+\mathbb{E}_{\mathbb{Q}^{\varepsilon}}\left[\mathbf{1}_{\{|X_{\tau}|\le R\}}\,\frac{\mathbb{E}_{\mathbb{P}^{\varepsilon}_{\tau,X_{\tau}}}\left[V_{X_{\tau}}(X_{\tau+\delta})^{1/2}\right]}{\psi\!\left(\log(1+\lambda^2)\right)^{1/2}}\right]
\\
&\le\frac{\mathbb{E}_{\mathbb{Q}^{\varepsilon}}\left[\sup_{s\in[0,T]}V(X_s)^{1/2}\right]}{\psi\!\left(\log(1+R^2)\right)^{1/2}}+\mathbb{E}_{\mathbb{Q}^{\varepsilon}}\left[\mathbf{1}_{\{|X_{\tau}|\le R\}}\,\frac{\mathbb{E}_{\mathbb{P}^{\varepsilon}_{\tau,X_{\tau}}}\left[\sup_{q\in[\tau,\tau+\delta]}V_{X_{\tau}}(X_{q})^{1/2}\right]}{\psi\!\left(\log(1+\lambda^2)\right)^{1/2}}\right]
\\
&\le
\frac{C}{\psi\!\left(\log(1+R^2)\right)^{1/2}}
+\frac{C(1+R)\delta^{1/2}}{\psi\!\left(\log(1+\lambda^2)\right)^{1/2}}.
\end{aligned}
\end{align*}
Letting $\delta \to 0$ first and then $R \to \infty$,
one sees that (ii) is satisfied. This completes the proof.
\end{proof}
\subsection{Existence of a solution for the Martingale Problem (MP)}
\begin{theorem}[Existence]Under assumptions (A)+(B), $\mu_t$
is a solution of \eqref{eq:FPE_general} (with \eqref{particular_case}) in the sense of Definition~\ref{def:weak_FPE} if and only if there exists $\QQ\in\mathcal{M}_0^{\mu_0}(\mathcal{L}_t)$ solution of \eqref{eq:MP} in the sense of Definition~\ref{eq:mart_def} such that $\mathbb{Q}\circ X_t^{-1}=\mu_t$.
\label{th:4.5}
\end{theorem}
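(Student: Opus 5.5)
The plan is to treat the two implications separately. The direction ``$\Leftarrow$'' is essentially Dynkin's formula. The direction ``$\Rightarrow$'' consists of checking the hypotheses (H1)--(H3) of Theorem~\ref{thm:15prime_weak} for the particular coefficients \eqref{particular_case}, and then invoking that theorem.

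\textbf{Step 1: the converse direction.} Assume $\QQ\in\mathcal{M}_0^{\mu_0}(\mathcal{L}_t)$ solves \eqref{eq:MP} with $\QQ\circ X_t^{-1}=\mu_t$. Taking expectations in \eqref{martingale} with $s=0$ gives, exactly as in part c) of the Remark following \eqref{eq:MP},
\[
\mu_t(f)=\mu_0(f)+\int_0^t\mu_r(\mathcal{L}_rf)\,\dd r .
\]
This holds for every $f\in C^2_c(\R^N)$, which is \eqref{eq:weak_FPE}. The integrability conditions 1)--2) of Definition~\ref{def:weak_FPE} concern only $\mu_t$ and the coefficients, not $\QQ$. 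When $\mu_t=u(t,\cdot)\dd x$, they were already verified in the proof of Theorem~\ref{thm:bridge_complete}. Fubini is justified by those same bounds together with the Remark after Definition~\ref{def:weak_FPE}, item b).

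\textbf{Step 2: checking (H1)--(H3) for the forward direction.}

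For (H1), the solution produced by Theorem~\ref{thm:bridge_complete} is $\mu_t=u(t,\cdot)\dd x$, where $u$ is the solution given by Theorem~\ref{thm:existence_PDE}. So $\mu_t$ has a density.

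For (H2), I would use the uniform compact support from Theorem~\ref{thm:existence_PDE} 2): on $[0,T]$ we have $\spt u(t)\subset B_{R(T)}$, so the weights $(1+|x|^2)^{-1}$ and $(1+|x|)^{-1}$ are harmless. It then suffices to bound
\[
\int_a^b\!\int_{B_{R(T)}}\bigl(|\nabla u|^{p-2}+(p-2)|\nabla u|^{p-3}|D^2u|\bigr)u\,\dd x\,\dd t .
\]
The bounds \eqref{chain_rule} and \eqref{eq:a_bound} handle this, using $u\le M$, Hölder in time, and the weighted estimate $C_0$ from Proposition~\ref{prop:hybrid31}.
\begin{itemize}
\item The first term gives $(b-a)^{2/p}$, since $\nabla u\in L^\infty(0,T;L^p)$ by the decay of $\Phi(u(t))$ established in Theorem~\ref{thm:existence_PDE}.
\item The second term gives $C_0^{1/2}(b-a)^{1/2}$ times a constant. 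Here $p\ge4$ is exactly what makes $|\nabla u|^{p-4}$ integrable.
\end{itemize}
Hence (H2) holds with $\alpha=\min(2/p,1/2)=2/p$.

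For (H3), Lemma~\ref{thm:levy} gives $g_t^\nu(x)\le (2M)^{p-2}\omega_N\ell^{2-sp}/(2-sp)$ uniformly in $(t,x)$. For $h_t^\nu$ I would use $\log(1+r)\le r^{\beta}$ with a fixed $\beta\in(0,sp)$, which is possible since $sp>1$. This yields
\[
h_t^\nu(x)\le(2M)^{p-2}\omega_N\int_\ell^\infty r^{\beta-1-sp}\,\dd r<\infty,
\]
uniformly in $(t,x)$, because $(1+|x|)^{-\beta}\le1$.

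\textbf{Step 3: conclusion, and the main obstacle.} With (H1)--(H3) in place, Theorem~\ref{thm:15prime_weak} yields $\QQ$ with $\QQ\circ X_t^{-1}=\mu_t$ solving \eqref{martingale}. The initial condition in Definition~\ref{eq:mart_def} follows at $t=0$.

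I expect the main obstacle to be (H2) in its uniform H\"older form $C(b-a)^\alpha$ up to $a=0$. Proposition~\ref{prop:hybrid31} controls $C_0$ on compact subintervals $[a,b]\subset(0,\infty)$, and near $t=0$ one needs that bound to remain finite on $[0,T]\times B_{R(T)}$. I would first try to obtain this from assumption (A6), via $\partial_tu\in L^\infty(0,T;L^2)$ (property (P2)), which should make the second-order weighted estimate global in time. If that fails, I would restrict to $[\delta,T]$ and let $\delta\to0$ using the narrow continuity of $t\mapsto\mu_t$.
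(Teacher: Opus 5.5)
Your route is the paper's: for the forward direction you verify (H1)--(H3) for the coefficients \eqref{particular_case} and invoke Theorem~\ref{thm:15prime_weak}. For the converse you use Dynkin's formula, which the paper's proof leaves implicit and relies on part c) of the Remark after \eqref{eq:MP}.

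In (H3) you are more careful than the paper. You integrate $|z|^2$ over $B_\ell$, as the definition of $g_\nu$ requires. The paper's displayed computation instead integrates over $\{|z|>\ell\}$ and produces $\int_\ell^\infty r^{1-sp}\,\dd r$, which diverges because $sp<2$. One small fix is needed in your logarithmic bound: $\log(1+r)\le r^\beta$ fails for small $\beta$ and large $r$. Use $\log(1+r)\le r^\beta/\beta$, or simply take $\beta=1$, which is allowed since $sp>1$ and is what the paper does.

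The obstacle you flag in (H2) is real, and the paper does not resolve it. In the proof of Theorem~\ref{thm:bridge_complete} the paper treats $C_0=\int_0^t\int_{B_R}|\nabla u|^{p-2}|D^2u|^2$ as finite down to $t=0$. Proposition~\ref{prop:hybrid31}, however, only controls cylinders compactly contained in $\Omega_T$, with a factor $1/((1-\sigma)\theta)$ coming from the time cutoff. So your (H2) step, like the paper's, is incomplete near $t=0$. The same hole affects condition 1) of Definition~\ref{def:weak_FPE}, which your Step 1 borrows from Theorem~\ref{thm:bridge_complete}.

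You can close it without using (P2):
\begin{itemize}
\item Rerun the difference-quotient argument of Proposition~\ref{prop:hybrid31} on $(0,t)$ with a purely spatial cutoff $\zeta(x)$, where $\zeta\equiv1$ on $B_{R(T)}$.
\item The time term then gives $\tfrac12\int\zeta^2|\delta_ju(t)|^2-\tfrac12\int\zeta^2|\delta_ju_0|^2$ instead of the $\zeta_t$-term.
\item For a slightly larger ball, $\int\zeta^2|\delta_ju_0|^2\le\int_{B_{R'}}|\partial_ju_0|^2\,\dd x<\infty$, since $u_0\in W^{1,p}(\R^N)$ with $p\ge2$.
\item The Steklov limit up to $t=0$ is justified by $u\in C([0,T];L^2(\R^N))$.
\item The nonlocal right-hand side is controlled globally in time by the Remark after Proposition~\ref{prop:hybrid31}.
\end{itemize}
This gives $C_0<\infty$ on $[0,T]\times B_{R(T)}$. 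Your H\"older estimate combined with $\nabla u\in L^\infty(0,T;L^p)$ then yields $C(b-a)^{1/2}$ uniformly for $0\le a<b\le T$.

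Your fallback of working on $[\delta,T]$ and letting $\delta\to0$ would not avoid this estimate. The martingale problem on $[0,T]$ and condition 1) of Definition~\ref{def:weak_FPE} already require $\int_0^t\int_{B_R}|\overline{b}|\,u\,\dd x\,\dd s<\infty$. Moreover, passing to $\delta\to0$ would need a $\delta$-uniform tightness bound near $t=0$, which is the same bound.
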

\begin{proof} It suffices to verify assumptions (H) for $a_t,b_t,\nu_t$ defined in \eqref{particular_case} and then apply Theorem \ref{thm:15prime_weak}.\\\\
\textbf{Proof of (H1).} It is a direct consequence of Theorem \ref{thm:bridge_complete}.\par\bigskip\noindent
\textbf{Proof of (H2).} By Proposition~\ref{lem:support} in the Appendix,
$\operatorname{supp}(u(t,\cdot))\subset B_{R(t)}$ for some
$R(t)<\infty$. Consequently, for any $(a,b)\subset\mathbb{R}_{+}$, using \eqref{chain_rule} we get
\begin{align*}
&\int_a^b\!\int_{\R^N}
\Bigl(\frac{|a_t(x)|}{1+|x|^2}+\frac{|b_t(x)|}{1+|x|}\Bigr)\,u(t,x)\dd x\,\dd t\\
&\le\int_a^b\!\int_{\R^N}
\Bigl(|a_t(x)|+|b_t(x)|\Bigr)\,u(t,x)\dd x\,\dd t\\&=\int_a^b\!\int_{B_{R(t)}}
\Bigl(|\nabla u|^{p-2}+|\nabla(|\nabla u|^{p-2})|\Bigr)\,u(t,x)\dd x\,\dd t\\
&\le|B_{\tilde{R}}|^{2/p}(b-a)^{2/p}\left(\|u\|_{L^p((0,\infty);W^{1,p}(\R^N))}^{p-2}+(p-2)C_0^{1/2}\|u\|_{L^p((0,\infty);W^{1,p}(\R^N))}^{\frac{p-4}{2}}\right)\\
&\le C(b-a)^{2/p},
\end{align*}
where $C>0$ is a constant large enough and $\tilde{R}=\sup_{t\in[a,b]}R(t)$.\\\\
\noindent\textbf{Proof of (H3).} Since $\log(1+r)\leq r$ and $0\leq u\leq M$, then
\begin{align*}
\frac{g_t^\nu(x)}{1+|x|^2}+h_t^\nu(x)&:=\frac{1}{1+|x|^2}\int_{|z|>\ell}|z|^2\nu_{t,x}(\dd z)+\int_{|z|>\ell}
\log\left(1+\frac{|z|}{1+|x|}\right)\nu_{t,x}(\dd z)\\
&\leq(2M)^{p-2}\omega_N\left(\int_\ell^\infty
r^{1-sp}\dd r+\int_\ell^\infty
r^{-sp}\dd r\right)
\\
&=(2M)^{p-2}\omega_N\left(\frac{\ell^{2-sp}}{sp-2}+\frac{\ell^{1-sp}}{sp-1}\right)
<\infty
\end{align*}
which implies,
\[
\sup_{t,x}\left(\frac{g_t^\nu(x)}{1+|x|^2}+h_t^\nu(x)\right)<\infty.
\]
\end{proof}
\section{From Martingale Problem \eqref{eq:MP} to SDE \eqref{eq:SDE}}
\subsection{Existence of a solution for the Stochastic Differential Equation \eqref{eq:SDE}}
\noindent The SDE we consider is
\begin{equation}\label{eq:SDE}
\left\{
\begin{aligned}
\dd X(t)
&= \nabla\bigl(|\nabla u(t,X(t))|^{p-2}\bigr)\dd t
+\sqrt{2}\,|\nabla u(t,X(t))|^{\frac{p-2}{2}}\dd W(t)\\
&\quad +\int_{\R^N\setminus\{0\}}z\,
N\bigl(\dd t,\dd z;\nu_{u(t,\cdot)}(X(t^-),\cdot)\bigr),
\quad t>0,\\
\Law_{X(t)}&=u(t,x)\dd x,\quad t\ge0.
\end{aligned}
\right.
\end{equation} 
where $W$ is a Brownian motion and $N$ is a Poisson random measure
with compensator $\nu_{t,X(t^-)}(\dd z)\dd t$.
\begin{definition}[Solution-process/weak solution,
see \cite{JS1987} Definition~II.2.24]
\label{def:sol_process}\quad
\begin{enumerate}
\item A \emph{solution-process}
(or strong solution) to \eqref{eq:JS_SDE}
on a basis $\mathcal{B}'$ relative to driving terms
$(W,p)$ is a càdlàg adapted process $Y$ such that
for each $i\leq N$:
\begin{equation}
Y^i = \xi^i
+\beta^i(Y)\cdot t
+\sum_{j\leq N}\gamma^{ij}(Y_-)\cdot W^j
+h^i\circ\delta(Y_-)\star(p-q)
+{h'}^i\circ\delta(Y_-)\star p,
\label{eq:sol_process}
\end{equation}
where : 
\begin{align*}
g \cdot A_t:= \int_0^t g(Y_{r^-}) \mathrm{d}A_r\quad\mbox{ and }\quad(g \star \mu)_t:= \int_0^t \!\int g(z) \mu(\mathrm{d}r, \mathrm{d}z).
\end{align*}
\item A \emph{weak solution}
(or solution-measure) to \eqref{eq:JS_SDE}
with initial condition $\eta$ is a probability measure
$\PP$ on the canonical space $(\Omega,\mathcal{F})$
such that there exist:
\begin{itemize}
\item a stochastic basis
$\mathcal{B}'=(\Omega',\mathcal{F}',\PP';
(\mathcal{F}'_t)_{t\geq 0})$
with driving terms $(W,p)$,
\item an $\mathcal{F}'_0$-measurable variable $\xi$
with $\mathcal{L}(\xi)=\eta$,
\item a solution-process $Y$ on $\mathcal{B}'$
in the sense of \textup{a)},
\end{itemize}
such that $\PP = \PP'\circ Y^{-1}$
(i.e.\ $\PP$ is the law of $Y$).
\end{enumerate}
\end{definition}

\begin{definition}[Local characteristics of a semimartingale,
see \cite{JS1987} Definition~II.2.6]
\label{def:char}
Let $X$ be a semimartingale.
The \emph{local characteristics} $(B,C,\nu)$ of $X$ are:
\begin{itemize}
    \item $B$ is the predictable drift process (finite variation, $B_0=0$),
    \item $C = \langle X^c, X^c \rangle$ is the quadratic variation of the continuous martingale part $X^c$,
    \item $\nu$ is the predictable compensator of the jump measure
    \[
        \mu^X(\omega, \mathrm{d}t, \mathrm{d}z) := \sum_{s > 0} \mathbf{1}_{\{\Delta X_s \neq 0\}} \, \delta_{(s, \Delta X_s(\omega))}(\mathrm{d}t, \mathrm{d}z).
    \]
\end{itemize}
\end{definition}
\begin{remark}
The random measure $\mu^X$ counts the jumps of $X$: it places a unit mass $\delta_{(s, \Delta X_s)}$ at $(s, \Delta X_s)$ whenever $\Delta X_s \neq 0$. Its compensator $\nu$ is the unique predictable random measure such that $\mu^X - \nu$ is a local martingale measure.
\end{remark}

\begin{theorem}[Existence]
\label{thm:5.4}
Under assumptions (A)+(B), there exists a weak solution $X=(X(t))_{t\geq 0}$ of the
McKean--Vlasov SDE \eqref{eq:SDE} in the sense
of Definition~\ref{def:sol_process} such that
$u(t,x)\dd x=\Law_{X(t)}$ for all $t\geq 0$.
\end{theorem}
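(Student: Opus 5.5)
The plan is to pass from the martingale problem to the SDE through the classical equivalence between martingale solutions and weak solutions of SDEs with jumps, namely Jacod--Shiryaev \cite{JS1987} (Theorem~III.2.26 together with Definition~II.2.24). The argument has three steps.

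\textbf{Step 1: a martingale solution with the right marginals.} Theorem~\ref{thm:existence_PDE} gives the solution $u$ of \eqref{eq:PDE}. By Theorem~\ref{thm:bridge_complete}, $\mu_t:=u(t,\cdot)\dd x$ is a weak solution of \eqref{eq:FPE_general} with the coefficients \eqref{particular_case}. Theorem~\ref{th:4.5} then provides $\QQ\in\mathcal{M}_0^{\mu_0}(\mathcal{L}_t)$ solving \eqref{eq:MP}, with $\QQ\circ X_t^{-1}=\mu_t$ for all $t$.

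\textbf{Step 2: identify the canonical process as a semimartingale.} Since $u$ is now a fixed function, the coefficients $\overline a,\overline b,\nu_{t,x}$ are deterministic measurable functions of $(t,x)$. The McKean--Vlasov dependence is therefore frozen, and the SDE \eqref{eq:SDE} is a linear, time-inhomogeneous SDE with jumps.

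I will show that under $\QQ$ the canonical process $X$ is a semimartingale with characteristics (relative to the truncation $h(z)=z\mathbf 1_{|z|\le\ell}$)
\[
B_t=\int_0^t\Bigl(\overline b(r,X_r)+\int_{\ell<|z|\le1}z\,\nu_{r,X_r}(\dd z)\Bigr)\dd r,\qquad C_t=\int_0^t 2\,\overline a(r,X_r)I\,\dd r,\qquad \nu^X(\dd t,\dd z)=\nu_{t,X_{t^-}}(\dd z)\dd t.
\]
The argument runs as follows.
\begin{itemize}
\item Apply \eqref{martingale} to test functions $\phi\in C^2_c$, then to localized versions of $x_i$ and $x_ix_j$, using stopping times $\tau_R=\inf\{t:|X_t|>R\}$.
\item This shows that $M^\phi$ is a local martingale for every $\phi\in C^2_b$.
\item By \cite[Theorem~II.2.42]{JS1987}, this is equivalent to $X$ being a semimartingale with the characteristics above.
\end{itemize}
The finiteness of these characteristics, which is needed for them to be well defined, follows from conditions 1)--2) of Definition~\ref{def:weak_FPE}, established in Theorem~\ref{thm:bridge_complete}. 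It also uses Lemma~\ref{thm:levy} and the bound $\nu_{t,x}(B_\ell^c)\le C(M)$ from the proof of (H3).

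The factor $\sqrt2$ in \eqref{eq:SDE} has to match the normalisation of $\mathcal A_t$ with respect to \eqref{eq:L_assembly}, so I will track constants carefully. The large jumps are kept uncompensated in the form $h'(z)=z\mathbf 1_{|z|>\ell}$, matching \eqref{eq:sol_process}.

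\textbf{Step 3: enlarge the space and build the driving noises.} I will use the representation theorem for semimartingales with absolutely continuous characteristics \cite[Theorem~III.2.26]{JS1987} (equivalently El Karoui--Lepeltier / Grigelionis).
\begin{itemize}
\item Write $\nu_{t,x}$ as the image of a fixed $\sigma$-finite measure. For instance, take Lebesgue measure $\dd \theta$ on $\R^N\times\R_+$ and set $\delta(t,x,(z,r))=z\,\mathbf 1_{\{r\le k(t,x,z)\}}$, where $k(t,x,z)=|u(t,x)-u(t,x+z)|^{p-2}|z|^{-N-sp}$.
\item On an enlargement $\mathcal B'$ of $(\mathbb D,\QQ)$, construct a Brownian motion $W$ with $\int_0^t\sqrt2\,\overline a^{1/2}(r,X_r)\dd W_r=X^c_t$. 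On the set where $\overline a=0$, use an independent Brownian motion to complete $W$.
\item On the same enlargement, construct a Poisson random measure $p$ with intensity $\dd t\,\dd\theta$ such that $\mu^X=\delta(X_-)\star p$ off zero.
\end{itemize}
With these, $X$ satisfies \eqref{eq:sol_process} with $\beta=\overline b$, $\gamma=\sqrt2\,\overline a^{1/2}I$, $h\circ\delta$ and $h'\circ\delta$. This is precisely \eqref{eq:SDE} with compensator $\nu_{t,X(t^-)}(\dd z)\dd t$. The law of $X$ is $\QQ$, whose marginals are $u(t,x)\dd x$, which gives the conclusion $u(t,x)\dd x=\Law_{X(t)}$.

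\textbf{Main obstacle.} I expect Step 2 to be the hardest part: justifying the passage from $C^2_c$ test functions to the local-martingale property of the coordinate functions and of the truncated jump sums. Here the coefficients are only locally integrable in time, with $\overline b\in L^1_{\mathrm{loc}}$ via \eqref{chain_rule}. My first approach is to use the global compact support of $u$ from Proposition~\ref{lem:support}. Since $\mu_t(B_{R(t)}^c)=0$, the process stays in a bounded set on $[0,T]$ $\QQ$-a.s. Jumps are also controlled, because $\nu_{t,x}(B_\ell^c)$ is bounded. Then $x_i$ coincides with a $C^2_c$ function on the relevant region up to $\tau_R$, and the localization becomes elementary.
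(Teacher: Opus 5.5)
Your overall route is the paper's: take the martingale solution from Theorem~\ref{th:4.5}, identify its characteristics via Jacod--Shiryaev Theorem~II.2.42, then apply the representation Theorem~III.2.26. You are more accurate than the paper in two places.
First, your normalisation $C_t=\int_0^t2\overline a(r,X_r)I\,\dd r$, $\gamma=\sqrt2\,\overline a^{1/2}I$ is right; the paper's $C=\int a\,\dd t$, $\gamma=\sqrt a$ does not match the $\sqrt2$ in \eqref{eq:SDE}.
Second, your thinning representation $\delta(t,x,(z,r))=z\mathbf 1_{\{r\le k(t,x,z)\}}$ over Lebesgue measure on $\R^N\times\R_+$ is what Theorem~III.2.26 requires. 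A Poisson measure cannot carry the state-dependent intensity $\nu_{t,x}(\dd z)\dd t$ that the paper uses with $\delta=z$.
One minor slip: with your $B_t$, the drift in Step~3 should be $\beta=\overline b+\int_{\ell<|z|\le1}z\,\nu_{t,x}(\dd z)$, not $\overline b$.

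The gap is in the step you single out. You must upgrade \eqref{martingale}, which holds only for $C^2_c$ test functions, to the local-martingale property for bounded $C^2$ functions that Theorem~II.2.42 requires; the paper passes over this silently. Your localization fails for two reasons.

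First, $\mathcal L_t$ is nonlocal. If $\psi\in C^2_c$ equals $x_i$ on $B_{R'}$, then for $x\in B_{R'}$ you still have $\mathcal L_t\psi(x)-\mathcal L_tx_i(x)=\int_{\{x+z\notin B_{R'}\}}\bigl(\psi(x+z)-(x+z)_i\bigr)\,\nu_{t,x}(\dd z)$, which need not vanish. Also, stopping at $\tau_R$ does not control the overshoot jump at $\tau_R$.

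Second, the claim that $X$ stays in a bounded set contradicts the characteristics you are identifying. If $\spt u(t,\cdot)\subset B_R$ for $t\le T$, then $\nu_{t,x}(\dd z)=u(t,x)^{p-2}|z|^{-N-sp}\dd z$ on $\{x+z\notin\bar B_R\}$. So the expected number of jumps of $X$ landing outside $\bar B_R$ before $T$ is
\[
\int_0^T\!\int_{\R^N}u(t,x)^{p-1}\int_{\{x+z\notin\bar B_R\}}\frac{\dd z}{|z|^{N+sp}}\,\dd x\,\dd t>0 .
\]
In fact compact support is incompatible with \eqref{eq:PDE} itself. For $x\notin\spt u(t,\cdot)$ one has $\Delta_p^su(t,x)=\int_{\R^N}u(t,y)^{p-1}|x-y|^{-N-sp}\dd y>0$, so the fractional term spreads positivity instantly. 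Proposition~\ref{lem:support} cannot hold for nontrivial solutions and should not be used here.

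The step can be done without any support information.
\begin{itemize}
\item For $\phi\in C^2_b$ put $\phi_n=\phi\,\chi(\cdot/n)$ with a smooth cutoff $\chi$. Then $\sup_n\|\phi_n\|_{C^2_b}<\infty$ and $\mathcal L_t\phi_n\to\mathcal L_t\phi$ pointwise.
\item By the uniform bounds on $g_\nu$ and $\nu_{t,x}(B_\ell^c)$ from Lemma~\ref{thm:levy},
\[
|\mathcal L_t\phi_n(x)|\le C\bigl(1+\overline a(t,x)+|\overline b(t,x)|\bigr).
\]
\item Condition 1) of Definition~\ref{def:weak_FPE}, together with $\QQ\circ X_t^{-1}=\mu_t$ and the boundedness of c\`adl\`ag paths on compact intervals, gives $\int_0^T(\overline a+|\overline b|)(r,X_r)\,\dd r<\infty$ $\QQ$-a.s.
\item Stop at $\sigma_R=\inf\{t:\int_0^t(\overline a+|\overline b|)(r,X_r)\,\dd r\ge R\}$ and let $n\to\infty$ by dominated convergence in the martingale identity for $\phi_n$. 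This shows that $M^\phi_{\cdot\wedge\sigma_R}$ is a martingale, which is hypothesis (b) of Theorem~II.2.42.
\end{itemize}
Your Step~3 then goes through unchanged.
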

\begin{proof} In Theorem \ref{th:4.5}, it has been proved the existence of $\mathbb{P}\in\mathcal{P}(\mathbb{D})$ such that
for every $f\in C^2_c(\R^N)$, the process
\begin{equation}
f(X(t))-f(X(0))-\int_0^t\mathcal{L}_r f(X(r))\,\dd r
\label{eq:RXZ_mart}
\end{equation}
is a martingale under $\PP$. Therefore, according to \cite[Theorem 2.42]{JS1987}, the process $X$ has characteristics $(B,C,\nu)$ in the sense of Definition \ref{def:char}, with
\begin{align*}
B:=\int_0^tb(t,x)dt,\quad C:=\int_0^ta(t,x)dt\quad\mbox{ and }\quad\nu:=\nu_{t,x}(dz). 
\end{align*}
Also, according to \cite[Theorem 2.26]{JS1987}, there exists a stochastic basis $(\Omega', \mathcal{F}', (\mathcal{F}'_t)_{t \geq 0}, \mathbb{P}_{\mathrm{hyb}})$ associated with a Brownian motion $W$,
a Poisson random measure $p(\dd t, \dd z)$ with intensity $q(dt,dz)=\nu_{t,x}(\dd z) \dd t$,
and $(\mathcal{F}'_t)$-adapted c\`adl\`ag process $\Xh$ such that
\begin{equation}
\mathbb{P}_{\mathrm{hyb}} \circ ({\Xh})^{-1} =
\mathbb{P} \circ X_t^{-1} = \mu_t = u(t, \cdot)
\,\dd x, \quad \forall t \ge 0,
\label{eq:law_final}
\end{equation}
and $\Xh$ solves the nonlinear McKean--Vlasov SDE with jumps:
\begin{equation}
\left\{
\begin{aligned}
Y_0 &= \xi,\\
\dd Y_t &= \beta(t,Y_t)\,\dd t
+\gamma(t,Y_{t-})\,\dd W_t
+\int h(\delta(t,Y_{t-},z))\,(p-q)(\dd t,\dd z)+\int h'(\delta(t,Y_{t-},z))\,p(\dd t,\dd z),
\end{aligned}
\right.
\label{eq:JS_SDE}
\end{equation}
 where 
\begin{equation}
\begin{aligned}
&\bullet\,\,h(t,x)=\mathbb{1}_{|z|\le\ell}\\
&\bullet\,\,h'(t,x)=x-h(t,x)\\
&\bullet\,\,\beta(t,x) = b(t,x) &&\in\R^N,\\
&\bullet\,\,\gamma(t,x) = \sqrt{a(t,x)}\,\mathbb{I}_N
&&\in\mathbb{M}_{N\times N}(\R),\\
&\bullet\,\,\delta(t,x,z)= z
&&\in\R^N\setminus\{0\},
\end{aligned}
\label{eq:our_coeff}
\end{equation}
This completes the proof.
\end{proof}
\begin{remark}[Nonlinear L\'evy--Khintchine formula]
\label{thm:LK}
Theorem \ref{thm:5.4} gives a nonlinear version of the L\'evy--Khintchine formula for every $\phi\in C^2_c(\R^N)$, which is
\begin{equation}
\widetilde{L}\phi(x)
= b_u(t,x)\cdot\nabla\phi(x)
+\frac{1}{2}\mathrm{tr}
\bigl(A_u(t,x)\nabla^2\phi(x)\bigr)
+\int_{\R^N\setminus\{0\}}
\Theta_\phi(x;z)\,\nu_u(t,x,\dd z),
\label{eq:LK_nonlinear}
\end{equation}
where $(b_u,A_u,\nu_u)\equiv(b,a\mathbb{I}_N,\nu_{t,x})$. Also, according to \cite[Theorem 2.34]{JS1987}, the following canonical representation holds
\begin{align*}
X_{hyb}(t)=X_{hyb}(0)+h\star(\mu^{X_{hyb}}-\nu_{t,x})+(x-h(x))\star\mu^{X}+B.
\end{align*}
\end{remark}

\section{Appendix}
\subsection{Global boundedness of supp(u)}
\begin{proposition}
\label{lem:support}
Let $R_o>0$ and $u_o\in L^1(\R^N)$ satisfy (A5).
Let $u\in C^0((0,\infty);L^2(\R^N))
\cap L^p_{\mathrm{loc}}((0,\infty);W^{1,p}(\R^N))
\cap L^p_{\mathrm{loc}}((0,\infty);W^{s,p}(\R^N))$
be a weak solution of \eqref{eq:PDE}.
Then $\spt u\cap(\R^N\times[0,t])$ is bounded for any $t>0$.
\end{proposition}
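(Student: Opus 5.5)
The plan is to prove finite propagation by a De Giorgi iteration on exterior domains, following the scheme of \cite{BRVV2017} and using the tools already stated in Lemma~\ref{lem:GN} and Lemma~\ref{lem:DG}. I fix $t>0$ and a large radius $R\ge 2R_o$. I then take decreasing exterior regions $E_i:=\{|x|>\rho_i\}\times(0,t)$ with $\rho_i:=R(1-2^{-i-1})$, so that $E_0=\{|x|>R/2\}$ and $E_\infty=\{|x|>R\}$. I also take smooth cut-offs $\zeta_i$ with $\zeta_i\equiv1$ on $\{|x|\ge\rho_{i+1}\}$, $\zeta_i\equiv0$ on $\{|x|\le\rho_i\}$ and $|\nabla\zeta_i|\lesssim 2^i/R$. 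Since $u_o=0$ on $E_0$ by (A5), the initial term disappears in every energy estimate. I use increasing levels $k_i:=k(1-2^{-i-1})$, with $k>0$ small to be chosen; the degeneracy $p>2$ is what allows the iteration to run at very small levels.

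\emph{Step 1 (energy estimate).} I test the weak formulation \eqref{eq:weak_form} with $(u-k_i)_+\zeta_i^p$, after a Steklov average in time, which is justified because $u\in C^0((0,\infty);L^2(\R^N))$. The local part gives
\[
\sup_{\tau<t}\int (u-k_i)_+^2\zeta_i^p\,\dd x+\iint|\nabla (u-k_i)_+|^p\zeta_i^p\,\dd x\,\dd \tau\lesssim \frac{2^{ip}}{R^p}\iint (u-k_i)_+^p\,\dd x\,\dd \tau .
\]
For the nonlocal part I split $\R^N\times\R^N$ in the usual way. The diagonal piece is nonnegative up to a Gagliardo error of order $2^{ip}R^{-p}\iint (u-k_i)_+^p$, estimated with $|\zeta_i(x)-\zeta_i(y)|\lesssim 2^iR^{-1}|x-y|$ and $sp<2$. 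The off-diagonal \emph{tail} piece is bounded by
\[
\int_0^t\!\!\int (u-k_i)_+\zeta_i^p(x)\int_{\R^N}\frac{u(y)^{p-1}}{|x-y|^{N+sp}}\,\dd y\,\dd x\,\dd\tau .
\]
Throughout I use $0\le u\le\|u_o\|_\infty$ from Theorem~\ref{thm:existence_PDE}, parts 4) and 5).

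\emph{Step 2 (iteration).} I set $Y_i:=\iint_{E_i}(u-k_i)_+^p$. I combine Step 1 with Lemma~\ref{lem:GN}, applied to $v=(u-k_{i+1})_+\zeta_i$ with $\sigma=2$, and with the Chebyshev-type bound $|\{u>k_{i+1}\}\cap E_{i+1}|\le (2^{i}/k)^p Y_i$. From these I want to reach $Y_{i+1}\le C\,B^i\,Y_i^{1+\alpha}$ with $B=2^{c(N,p)}$, $\alpha=\alpha(N,p)>0$, and $C$ depending on $k,R,t$. Lemma~\ref{lem:DG} then gives $Y_i\to0$, that is $u\le k$ on $\{|x|>R\}\times(0,t)$. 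To pass from the small level $k$ to $u=0$, I use the degenerate structure $p>2$. I redo the iteration with a geometric sequence of levels $k_j\downarrow0$ and radii $R_j\uparrow 2R$ with $\sum_j(R_{j+1}-R_j)<\infty$, checking the smallness condition of Lemma~\ref{lem:DG} at each stage from $Y_0\le \|u_o\|_\infty^{p-1}\iint_{E_0}u$. The mass on $E_0$ is controlled through the $L^1$ contraction in part 3).

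\emph{Main obstacle.} The hard step is the tail term in Step 1. It is not small just because $u$ is small outside $B_R$: mass inside $B_{R_o}$ drives it at every exterior point with size about $\|u_o\|_\infty^{p-2}\,\|u_o\|_1\,R^{-N-sp}$. This is exactly the mechanism by which nonlocal diffusions usually spread instantly. My first attempt will be to make this term affordable by exploiting the factor $(u-k_i)_+$ multiplying it. I bound the tail by $\mathrm{Tail}_i\lesssim 2^{i(N+sp)}R^{-sp}\|u_o\|_\infty^{p-1}$ and absorb $\iint (u-k_i)_+\,\mathrm{Tail}_i\le k^{1-p}\mathrm{Tail}_i\,Y_i$ into the recursion. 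This needs $k^{1-p}R^{-sp}$ to stay controlled, so it forces $R$ to grow as $k\downarrow0$. Whether the radii $R_j$ stay bounded is the delicate point. If they do not, I will revisit it by replacing $u$ with the sharper exterior decay obtained from the previous stage in the tail integral, bootstrapping the decay of $u$ outward before concluding that the support is contained in a bounded set $B_{R(t)}$.
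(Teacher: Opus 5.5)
The step that fails is the last one: passing from $u\le k$ on $\{|x|>R(k)\}\times(0,t)$ to $u=0$ outside a \emph{fixed} ball. Neither your level-by-level iteration nor the bootstrap you keep in reserve can do this. The tail you isolated is bounded \emph{below}, not only above. At a far point $x$ it is at least $(|x|+R)^{-N-sp}\int_{B_R}u(y,\tau)^{p-1}\,\dd y$. This lower bound is fed by the mass of $u$ inside a fixed ball, so no improvement of the decay of $u$ outside that ball makes it smaller, and $R(k)\to\infty$ as $k\downarrow0$ is unavoidable.

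In fact the proposition is false for the solutions the paper considers ($u\ge0$, $u_o\not\equiv0$). Suppose $u(\cdot,\tau)=0$ a.e.\ on $\R^N\setminus B_R$ for all $\tau\in(0,t)$. Test \eqref{eq:weak_form} with $\phi=\psi(x)\chi(\tau)$, where $0\le\psi\in C^\infty_c(\{|x|>R+1\})$, $\psi\not\equiv0$, and $0\le\chi\in C^\infty_c((0,t))$. The time term vanishes since $u\psi=0$. The local term vanishes since $\nabla u=0$ a.e.\ on $\spt\psi$. In the nonlocal term only pairs with one point in $B_R$ and the other in $\spt\psi$ survive, so the identity becomes
\[
\int_0^t\chi(\tau)\int_{B_R}u(x,\tau)^{p-1}\Bigl(\int_{\R^N}\frac{\psi(y)}{|x-y|^{N+sp}}\,\dd y\Bigr)\dd x\,\dd\tau=0 .
\]
The inner integral is strictly positive on $B_R$, so $u\equiv0$ on $(0,t)$. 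This contradicts the attainment of $u_o\not\equiv0$ as $\tau\downarrow0$. It is exactly the instantaneous spreading you suspected. What your De~Giorgi scheme can deliver is a decay estimate for $u$ at infinity, not a bounded support.

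The paper's proof does not supply the missing idea; it loses the tail through two unjustified steps.
\begin{itemize}
\item After testing with $\eta^p u$ and splitting $\eta^p(x)u(x)-\eta^p(y)u(y)$, it absorbs the cross term $|u(x)-u(y)|^{p-1}u(y)\,|\eta^p(x)-\eta^p(y)|$ into $\varepsilon\,\eta^p(x)|u(x)-u(y)|^p$ by Young's inequality. This is impossible where $\eta(x)=0<\eta(y)$, which is precisely the interior--exterior interaction.
\item In \eqref{cinq} it replaces $\int_0^t\!\int_{\R^N}|u|^p$ by $\int_0^t\!\int_{\R^N\setminus B_{r_i}}|u|^p$. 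That is legitimate for the local contribution, which is supported where $D\eta\neq0$, but it drops the contribution of $y\in B_{r_i}$ to the nonlocal one.
\end{itemize}
Your diagnosis of the obstruction is therefore sharper than the paper's argument, but no route can prove the statement as written. The later results that rely on it also need a different justification, for instance via decay or moment bounds. These are item 6) of Theorem~\ref{thm:existence_PDE} (mass conservation) and the verification of (H2).
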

\begin{proof}
We fix $t>0$ and let $\eta\in C_0^1(\R^N)$ with
$0\le\eta\le 1$, $\eta\equiv 0$ in $B_{R_o}$.
We choose the test function $\varphi=\eta^p u$ in
\eqref{eq:PDE}.
\begin{align}\label{un}
\begin{aligned}
&\frac{1}{2}\int_0^t\!\int_{\R^N}
\eta^p\partial_\tau(u^2)\,\dd x\,\dd\tau+\int_0^t\!\int_{\R^N}
\eta^p|\nabla u|^p\,\dd x\,\dd\tau+p\int_0^t\!\int_{\R^N}
|\nabla u|^{p-2}\nabla u
\cdot D\eta\,\eta^{p-1}u
\,\dd x\,\dd\tau\\
&+\frac{1}{2}\int_0^t\!\int_{\R^N}\!\int_{\R^N}
\frac{|u(x)-u(y)|^p\,\eta^p(x)}
{|x-y|^{N+sp}}\dd y\,\dd x\,\dd\tau\\
&+\frac{1}{2}\int_0^t\!\int_{\R^N}\!\int_{\R^N}
\frac{|u(x)-u(y)|^{p-2}(u(x)-u(y))\,u(y)
(\eta^p(x)-\eta^p(y))}
{|x-y|^{N+sp}}\dd y\,\dd x\,\dd\tau\\
&\le0.
\end{aligned}
\end{align}
$\bullet$ \noindent\textbf{Time derivative term.}
By the Lions--Magenes integration by parts formula
\cite{Lions1972}:
\begin{align}\label{deux}
\frac{1}{2}\int_0^t\!\int_{\R^N}
\eta^p\partial_\tau(u^2)\,\dd x\,\dd\tau=\frac{1}{2}\Bigl[
\int_{\R^N\times\{t\}}\eta^p u^2\,\dd x
-\int_{\R^N}\eta^p u_o^2\,\dd x
\Bigr]
=\frac{1}{2}\int_{\R^N\times\{t\}}\eta^p u^2\,\dd x,
\end{align}
where the last equality uses $\spt u_o\subset B_{R_o}$
and $\eta\equiv 0$ on $B_{R_o}$.\\\\
$\bullet$ \noindent\textbf{Local diffusion term.}
Using Young's inequality $AB\le\mu A^{p/(p-1)}+\frac{1}{\mu^{p-1}}B^p$ with $\mu=\frac{1}{2(p-1)}$, we have
\begin{equation}\label{trois}
\begin{aligned}
p\left|\int_0^t\!\int_{\R^N}
|\nabla u|^{p-2}\nabla u
\cdot D\eta\,\eta^{p-1}u
\,\dd x\,\dd\tau\right|&\le p\int_0^t\!\int_{\R^N}
\eta^{p-1}|\nabla u|^{p-1}|D\eta||u|\,\dd x\,\dd\tau\\
&\le p\int_0^t\!\int_{\R^N}
\left[\frac{(p-1)\mu}{p}\,\eta^p|\nabla u|^p
+\frac{1}{p\mu^{p-1}}|D\eta|^p|u|^p\right]\,\dd x\,\dd\tau\\
&\le\frac{1}{2}\int_0^t\!\int_{\R^N}\eta^p|\nabla u|^p\,\dd x\,\dd\tau
+\gamma\|D\eta\|_{\infty}^p\int_0^t\!\int_{\R^N}|u|^p\,\dd x\,\dd\tau.
\end{aligned}
\end{equation}
$\bullet$ \noindent\textbf{Nonlocal term.}
We apply Young's inequality $AB\le\varepsilon A^{p/(p-1)}+C(\varepsilon)B^p$ with $\varepsilon=1/2$ and $|\eta^p(x)-\eta^p(y)|\le p\|D\eta\|_\infty|x-y|$
\begin{align*}
&\frac{1}{2}\left|\int_0^t\!\int_{\R^N}\!\int_{\R^N}
\frac{|u(x)-u(y)|^{p-2}(u(x)-u(y))\,u(y)
(\eta^p(x)-\eta^p(y))}
{|x-y|^{N+sp}}\dd y\,\dd x\,\dd\tau\right|\\
&\le\frac{\varepsilon}{2}
\int_0^t\!\int_{\R^N}\!\int_{\R^N}
\frac{|u(x)-u(y)|^p\eta^p(x)}{|x-y|^{N+sp}}\dd y\,\dd x\,\dd\tau+\frac{C(\varepsilon)}{2}
\int_0^t\!\int_{\R^N}|u(y)|^p|\eta^p(x)-\eta^p(y)|
J(y)\,\dd y\,\dd\tau\\
&=-\frac{1}{4}\int_0^t\!\int_{\R^N}\!\int_{\R^N}
\frac{|u(x)-u(y)|^p\,\eta^p(x)}
{|x-y|^{N+sp}}\dd y\,\dd x\,\dd\tau+\frac{C(\varepsilon)(p\|D\eta\|_\infty)^p}{2}
\int_0^t\!\int_{\R^N}|u(y)|^p
J(y)\,\dd y\,\dd\tau,
\end{align*}
where $\displaystyle J(y):=\int_{\spt\eta}|x-y|^{p-(N+sp)}\dd x
$. Since $\spt\eta\subset B_{d_\eta}(0)$ where
$d_\eta:=\mathrm{diam}(\spt\eta)$,
we have
\begin{align*}
J(y)&= \mathbb{1}_{B_{2d_\eta}}(y)\int_{\spt\eta}\frac{1}{|x-y|^{N+sp-p}}\dd x+\mathbb{1}_{B_{2d_\eta}^c}(y)\int_{\spt\eta}\frac{1}{|x-y|^{N+sp-p}}\dd x\\
&\le\mathbb{1}_{B_{2d_\eta}}(y)\int_{B_{3d_\eta}}\frac{1}{|x-y|^{N+sp-p}}\dd x+\mathbb{1}_{B_{2d_\eta}^c}(y)\int_{\spt\eta}\frac{1}{(|y|-|x|)^{N+sp-p}}\dd x\\
&\le\omega_N\mathbb{1}_{B_{2d_\eta}}(y)\int_0^{3d_\eta}r^{p-sp-1}\dd r+\mathbb{1}_{B_{2d_\eta}^c}(y)\int_{\spt\eta}\frac{1}{(|y|-d_{\eta})^{N+sp-p}}\dd x\\
&\le\mathbb{1}_{B_{2d_\eta}}(y)\frac{\omega_N(3d_\eta)^{p(1-s)}}{p(1-s)}+\mathbb{1}_{B_{2d_\eta}^c}(y)\int_{\spt\eta}\left(\frac{|y|}{2}\right)^{-(N+sp-p)}\dd x\\
&=\mathbb{1}_{B_{2d_\eta}}(y)\frac{\omega_N(3d_\eta)^{p(1-s)}}{p(1-s)}+\mathbb{1}_{B_{2d_\eta}^c}(y)|\spt\eta|\cdot d_\eta^{-(N+sp-p)}<\infty.
\end{align*}
Therefore, 
\begin{align}\label{quatre}
\begin{aligned}
&\frac{1}{2}\left|\int_0^t\!\int_{\R^N}\!\int_{\R^N}
\frac{|u(x)-u(y)|^{p-2}(u(x)-u(y))\,u(y)
(\eta^p(x)-\eta^p(y))}
{|x-y|^{N+sp}}\dd y\,\dd x\,\dd\tau\right|\\
&\le C_\eta\|D\eta\|_\infty^p
\int_0^t\!\int_{\R^N}|u|^p\,\dd x\,\dd\tau.
\end{aligned}
\end{align}
where $J(y)\le C_\eta<\infty$.\\\\
Rearranging the terms in \eqref{un}, and using \eqref{deux}-\eqref{quatre},
we obtain
\begin{align*}
&\frac{1}{2}\int_{\R^N\times\{t\}}\eta^p u^2\,\dd x+\int_0^t\!\int_{\R^N}
\eta^p|\nabla u|^p\,\dd x\,\dd\tau+\frac{1}{2}\int_0^t\!\int_{\R^N}\!\int_{\R^N}
\frac{|u(x)-u(y)|^p\,\eta^p(x)}
{|x-y|^{N+sp}}\dd y\,\dd x\,\dd\tau\\
&\le\frac{1}{2}\int_0^t\!\int_{\R^N}\eta^p|\nabla u|^p\,\dd x\,\dd\tau
+\gamma\|D\eta\|_{\infty}^p\int_0^t\!\int_{\R^N}|u|^p\,\dd x\,\dd\tau+C_\eta\|D\eta\|_\infty^p
\int_0^t\!\int_{\R^N}|u|^p\,\dd x\,\dd\tau
\end{align*}
Regrouping the terms in $\displaystyle\int_0^t\!\int_{\R^N}\eta^p|\nabla u|^p\,\dd x\,\dd\tau$ and taking the
supremum over $\tau\in(0,t)$ gives
\begin{align}
&\sup_{\tau\in(0,t)}\int_{\R^N}\eta^p|u|^2\,\dd x
+\frac{1}{2}\int_0^t\!\int_{\R^N}\eta^p|Du|^p\,\dd x\,\dd\tau
\le\gamma\|D\eta\|_{L^\infty(\R^N)}^p
\int_0^t\!\int_{\R^N}|u|^p\,\dd x\,\dd\tau,
\label{eq:energy}
\end{align}
with $\gamma=\gamma(N,p,s,\spt\eta)$.\\\\
\noindent Now consider the following notations 
\begin{enumerate}
\item $r_i:=2r-\frac{r}{2^i}$ ($i\in\mathbb{N}_0$), for some $r\ge R_o$;
\item $s_i:=\frac{r_i+r_{i+1}}{2}$ ($i\in\mathbb{N}_0$);
\item $\zeta_k\in C_0^1(B_{2k})$ with
$0\le\zeta_k\le 1$, $\zeta_k\equiv 1$ in $B_k$,
$\|D\zeta_k\|_\infty\le 2/k$;
\item $\eta_i\in C^1(\R^N)$ with $0\le\eta_i\le 1$,
$\eta_i\equiv 0$ in $B_{r_i}$,
$\eta_i\equiv 1$ in $\R^N\setminus B_{s_i}$,
$\|D\eta_i\|_\infty\le 2^{i+3}/r$;
\item $\tilde\eta_i\in C^1(\R^N)$ with
$0\le\tilde\eta_i\le 1$, $\tilde\eta_i\equiv 0$ in $B_{s_i}$,
$\tilde\eta_i\equiv 1$ in $\R^N\setminus B_{r_{i+1}}$,
$\|D\tilde\eta_i\|_\infty\le 2^{i+3}/r$;
\end{enumerate}
and choose $\eta=\zeta_k\eta_i$ in \eqref{eq:energy}. Since $B_{r_i}\subset B_{s_i}\subset B_{r_{i+1}}$, then
\begin{align*}
&\sup_{\tau\in(0,t)}
\int_{\R^N\setminus B_{s_i}}\zeta_k^p|u|^2\,\dd x
+\int_0^t\!\int_{\R^N\setminus B_{s_i}}
\zeta_k^p|Du|^p\,\dd x\,\dd\tau\le\gamma\Bigl(\frac{2^{ip}}{r^p}
+\frac{2^p}{k^p}\Bigr)
\int_0^t\!\int_{\R^N\setminus B_{r_i}}|u|^p\,\dd x\,\dd\tau.
\end{align*}
Letting $k\to\infty$, we deduce
\begin{equation}\label{cinq}
\sup_{\tau\in(0,t)}
\int_{\R^N\setminus B_{s_i}}|u|^2\,\dd x
+\int_0^t\!\int_{\R^N\setminus B_{s_i}}|Du|^p\,\dd x\,\dd\tau
\le\frac{\gamma 2^{ip}}{r^p}
\int_0^t\!\int_{\R^N\setminus B_{r_i}}|u|^p\,\dd x\,\dd\tau.
\end{equation}
Set $v_i:=\tilde\eta_i|u|$ and notice that
\begin{align}\label{six}
\begin{aligned}
&\sup_{\tau\in(0,t)}\int_{\R^N}v_i^2\,\dd x
+\int_0^t\!\int_{\R^N}|Dv_i|^p\,\dd x\,\dd\tau\\
&\qquad\le
\sup_{\tau\in(0,t)}\int_{\R^N}\tilde\eta_i^2|u|^2\,\dd x
+\int_0^t\!\int_{\R^N}
\bigl[\tilde\eta_i^p|Du|^p+|D\tilde\eta_i|^p|u|^p\bigr]
\dd x\,\dd\tau\\
&\qquad\le
\sup_{\tau\in(0,t)}\int_{\R^N\setminus B_{s_i}}|u|^2\,\dd x
+\int_0^t\!\int_{\R^N\setminus B_{s_i}}
\Bigl[|Du|^p+\frac{2^{(i+3)p}}{r^p}|u|^p\Bigr]
\dd x\,\dd\tau\\
&\qquad\le\frac{\gamma 2^{ip}}{r^p}
\int_0^t\!\int_{\R^N\setminus B_{r_i}}|u|^p\,\dd x\,\dd\tau.
\end{aligned}
\end{align}
Therefore, since $v_{i-1}\equiv|u|$ on $(\R^N\setminus B_{r_i})\times(0,t)$, combining \eqref{cinq} and \eqref{six} we get
\begin{equation}
\sup_{\tau\in(0,t)}\int_{\R^N}v_i^2\,\dd x
+\int_0^t\!\int_{\R^N}|Dv_i|^p\,\dd x\,\dd\tau
\le\frac{\gamma 2^{ip}}{r^p}
\int_0^t\!\int_{\R^N}v_{i-1}^p\,\dd x\,\dd\tau,
\label{eq:vi}
\end{equation}
where $\gamma=\gamma(p,s)$. Applying Gagliardo-Niremberg's inequality Lemma~\ref{lem:GN} with $q=p$, $\sigma=2$ and
$\vartheta:=N(p-2)/(N(p-2)+2p)$, we may write
\begin{align*}
\int_0^t\!\int_{\R^N}v_i^p\,\dd x\,\dd\tau
&\le\gamma\int_0^t
\Bigl(\int_{\R^N}v_i^2\,\dd x\Bigr)^{\frac{(1-\vartheta)p}{2}}
\Bigl(\int_{\R^N}|Dv_i|^p\,\dd x\Bigr)^\vartheta\dd\tau\\
&\le\gamma\sup_{\tau\in(0,t)}
\Bigl(\int_{\R^N}v_i^2\,\dd x\Bigr)^{\frac{(1-\vartheta)p}{2}}
\int_0^t\Bigl(\int_{\R^N}|Dv_i|^p\,\dd x\Bigr)^\vartheta\dd\tau\\
&\le\gamma t^{1-\vartheta}
\sup_{\tau\in(0,t)}
\Bigl(\int_{\R^N}v_i^2\,\dd x\Bigr)^{\frac{(1-\vartheta)p}{2}}
\Bigl(\int_0^t\!\int_{\R^N}|Dv_i|^p\,\dd x\,\dd\tau\Bigr)^\vartheta,
\end{align*}
where $\gamma=\gamma(N,p,s)$. Consequently, setting $\displaystyle\mathtt{k}_i:=\int_0^t\!\int_{\R^N}v_i^p\,\dd x\,\dd\tau$ and using estimate \eqref{eq:vi}, we deduce
\begin{align*}
\mathtt{k}_i&\le\gamma t^{1-\vartheta}
\Bigl(\frac{2^{ip}}{r^p}\mathtt{k}_{i-1}\Bigr)^{
1+\frac{(1-\vartheta)(p-2)}{2}}.
\end{align*}
Applying Lemma~\ref{lem:DG} with
$\alpha:=(1-\vartheta)(p-2)/2$,
$B=2^{p(1+\alpha)}$,
$C=\gamma t^{1-\vartheta}r^{-p(1+\alpha)}$,
we find $\displaystyle\lim_{i\to\infty}\mathtt{k}_i=0$ provided
\begin{equation}
\mathtt{k}_0
\le C^{-1/\alpha}B^{-1/\alpha^2}
=\gamma t^{-2/(p-2)}r^{(p^2+Np-2N)/(p-2)},
\label{eq:k0_cond}
\end{equation}
with $\gamma=\gamma(N,p,s)$.
Since $\displaystyle\mathtt{k}_0\le\int_0^t\!\int_{\R^N}|u|^p\,\dd x\,\dd\tau$,
condition \eqref{eq:k0_cond} is satisfied by choosing
$r\ge R_o$ such that
\[
r^{(p^2+Np-2N)/(p-2)}
\ge\gamma t^{2/(p-2)}
\int_0^t\!\int_{\R^N}|u|^p\,\dd x\,\dd\tau.
\]
Hence $\displaystyle\lim_{i\to\infty}\mathtt{k}_i=0$, which implies
$u\equiv 0$ on $(\R^N\setminus B_{2r})\times[0,t]$
and proves the lemma.\hfill
\end{proof}

\noindent We prove the analogue of \cite[Proposition~3.1 chapter 8]{DiBenedetto1993} for equation of the type \eqref{eq:PDE}.
\begin{proposition}[Weighted second-order regularity,
hybrid equation, $p\geq 4$]
\label{prop:hybrid31}
Let $p\geq 4$, $s\in(0,1)$ with $sp>1$, $N\geq 1$.
Let $u$ be a local weak solution of
\begin{equation}
u_t - \operatorname{div}(|\nabla u|^{p-2}\nabla u)
-\Delta_p^s u = 0
\quad\text{in }\Omega_T := \Omega\times(0,T),
\label{eq:hybrid}
\end{equation}
with $u\in C_{\mathrm{loc}}(0,T;L^2_{\mathrm{loc}}(\Omega))
\cap L^p(0,T;W^{1,p}_{\mathrm{loc}}(\Omega))
\cap L^p(0,T;W^{s,p}_{\mathrm{loc}}(\Omega))$.
Then for every $\sigma\in(0,1)$ and every cylinder
$Q(\theta,\rho) = K_\rho\times(-\theta,0)\Subset\Omega_T$,
there exists $\gamma = \gamma(p,N)>0$ such that:

\begin{align*}
&\iint_{Q(\theta,\rho)}
|\nabla u|^{p-2}|\nabla(\partial_{x_j}u)|^2\zeta^2
\,\dd x\,\dd\tau
\notag\\
&\leq\,\gamma\left[
\iint_{Q(\theta,\rho)}\!
\left(
\frac{|\nabla u|^2}{(1-\sigma)\theta}
+\frac{|\nabla u|^p}{(1-\sigma)^2\rho^2}
\right)\dd x\,\dd\tau\right.
\notag\\
&\left.\quad
+\iint_{Q(\theta,\rho)}\!\int_{\R^N}
\frac{|u(x,\tau)-u(y,\tau)|^{p-2}
|\partial_{x_j}u(x,\tau)-\partial_{x_j}u(y,\tau)|^2
|\zeta(x)-\zeta(y)|^2}
{|x-y|^{N+sp}}
\,\dd y\,\dd x\,\dd\tau
\right],
\end{align*}

\end{proposition}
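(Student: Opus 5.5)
The proof follows DiBenedetto's scheme for \cite[Proposition~3.1, Chapter 8]{DiBenedetto1993}: differentiate the equation in $x_j$ using Steklov averages and difference quotients, then test with $\partial_{x_j}u\,\zeta^2$. Here $\zeta(x,\tau)$ is a cutoff equal to $1$ on $Q(\sigma\theta,\sigma\rho)$ and vanishing near the parabolic boundary of $Q(\theta,\rho)$, with $|\nabla\zeta|\le 2/((1-\sigma)\rho)$ and $0\le\partial_\tau\zeta\le 2/((1-\sigma)\theta)$.

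To make the computation rigorous, I will work with the difference quotient $\Delta_h u:=(u(x+he_j,\tau)-u(x,\tau))/h$ and the Steklov average $u_h$ in time. Since the equation is translation invariant in $x$, the translate $u(\cdot+he_j,\cdot)$ is again a local weak solution. Subtracting the two weak formulations \eqref{eq:weak_form}, dividing by $h$, and testing with $\varphi=\Delta_h u\,\zeta^2$ produces three groups of terms.

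\textbf{Time term.} After the Steklov limit this becomes $\frac12\int|\Delta_h u|^2\zeta^2\big|_{\tau=0}$ minus $\iint|\Delta_h u|^2\zeta\,\partial_\tau\zeta$. The latter is bounded by $\gamma\iint|\nabla u|^2/((1-\sigma)\theta)$, using that $\|\Delta_h u\|_{L^2}\le\|\partial_{x_j}u\|_{L^2}$ on slightly larger sets.

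\textbf{Local term.} I use the standard identity
\[
\bigl(|\xi|^{p-2}\xi-|\eta|^{p-2}\eta\bigr)\cdot(\xi-\eta)\ge \gamma\,(|\xi|+|\eta|)^{p-2}|\xi-\eta|^2
\]
together with
\[
\bigl||\xi|^{p-2}\xi-|\eta|^{p-2}\eta\bigr|\le\gamma(|\xi|+|\eta|)^{p-2}|\xi-\eta|.
\]
The coercive part gives $\iint(|\nabla u|+|\nabla u(\cdot+he_j)|)^{p-2}|\nabla\Delta_hu|^2\zeta^2$. The cross term, containing $2\zeta\nabla\zeta\,\Delta_hu$, is absorbed by Young's inequality, leaving $\iint(|\nabla u|+|\nabla u_{+h}|)^{p-2}|\Delta_hu|^2|\nabla\zeta|^2$. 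By H\"older and the $L^p$ bound on difference quotients, this is controlled by $\gamma\iint|\nabla u|^p/((1-\sigma)^2\rho^2)$. The hypothesis $p\ge4$ is not needed here, but it guarantees $|\nabla u|^{p-2}$ weights behave well later.

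\textbf{Nonlocal term.} Write $U=u(x)-u(y)$, $U_h$ for its translate, $\Phi(t)=|t|^{p-2}t$ and $w=\Delta_hu$. The term is
\[
\frac1{2h}\iiint\frac{(\Phi(U_h)-\Phi(U))\,(w(x)\zeta^2(x)-w(y)\zeta^2(y))}{|x-y|^{N+sp}}.
\]
I split
\[
w(x)\zeta^2(x)-w(y)\zeta^2(y)=\tfrac12\bigl(\zeta^2(x)+\zeta^2(y)\bigr)(w(x)-w(y))+\tfrac12\bigl(w(x)+w(y)\bigr)(\zeta^2(x)-\zeta^2(y)).
\]
Since $(U_h-U)/h=w(x)-w(y)$, the first piece is nonnegative by monotonicity of $\Phi$ and is discarded. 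For the second, I use $\zeta^2(x)-\zeta^2(y)=(\zeta(x)-\zeta(y))(\zeta(x)+\zeta(y))$, the bound $|\Phi(U_h)-\Phi(U)|\le\gamma(|U|+|U_h|)^{p-2}|U_h-U|$, and Young's inequality. This splits the term into a small multiple of the discarded good term, which can be kept nonnegative for absorption, plus
\[
\iiint(|U|+|U_h|)^{p-2}|w(x)-w(y)|^2|\zeta(x)-\zeta(y)|^2|x-y|^{-N-sp}.
\]
In the limit $h\to0$ this is exactly the double integral in the statement. This is the step I expect to be the main obstacle. The pieces where $y$ lies far outside $\operatorname{supp}\zeta$ must stay finite uniformly in $h$; I would handle them using $|\zeta(x)-\zeta(y)|\le\min(1,|x-y|/((1-\sigma)\rho))$. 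The passage $h\to0$ also requires convergence of the weighted quantities, which I would obtain by Fatou's lemma on the left-hand side and by keeping the right-hand side in its stated (possibly infinite) form, so that no convergence is needed there.

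Finally, I let $h\to0$: Fatou's lemma gives $|\nabla u|^{p-2}|\nabla\partial_{x_j}u|^2\zeta^2$ on the left, and the difference quotients converge to $\partial_{x_j}u$ in $L^p_{\rm loc}$ on the right. Dropping the nonnegative time trace term yields the claimed inequality with $\gamma=\gamma(p,N)$.
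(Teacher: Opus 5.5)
Your scheme (Steklov averages, difference quotients $w=\Delta_h u$, test function $w\zeta^2$, symmetric splitting of $w(x)\zeta^2(x)-w(y)\zeta^2(y)$, absorption by Young) is the same as the paper's. However, the Young step in the nonlocal term is mis-paired, and that mis-pairing is exactly where your stated form of the remainder comes from. Set $W_h:=(|U|+|U_h|)^{p-2}$. The cross term is then bounded by
\[
\gamma\,W_h\,|w(x)-w(y)|\,\bigl(\zeta(x)+\zeta(y)\bigr)\,|w(x)+w(y)|\,|\zeta(x)-\zeta(y)|\,|x-y|^{-N-sp}.
\]
The only term you can absorb into is the good term, of size $W_h|w(x)-w(y)|^2(\zeta^2(x)+\zeta^2(y))|x-y|^{-N-sp}$. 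So $|w(x)-w(y)|$ must be paired with $\zeta(x)+\zeta(y)$, and the remainder is forced to be $C\,W_h|w(x)+w(y)|^2|\zeta(x)-\zeta(y)|^2|x-y|^{-N-sp}$. Pairing the other way leaves $W_h|w(x)+w(y)|^2(\zeta(x)+\zeta(y))^2|x-y|^{-N-sp}$, which is neither absorbable nor of the stated form.

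In the limit you therefore get $|u(x)-u(y)|^{p-2}\bigl(|\partial_{x_j}u(x)|^2+|\partial_{x_j}u(y)|^2\bigr)|\zeta(x)-\zeta(y)|^2$. Since $|a|^2+|b|^2$ is not controlled by $|a-b|^2$, the inequality with $|\partial_{x_j}u(x)-\partial_{x_j}u(y)|^2$ does not follow. The paper's computation runs into the same thing. Its Young step correctly produces $\tfrac12|\delta_ju(x)+\delta_ju(y)|^2|\zeta(x)-\zeta(y)|^2$, and its penultimate display has $|\partial_{x_j}u(x)|^2+|\partial_{x_j}u(y)|^2$. The difference appears only in its final line, without justification. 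A corrected version of your argument thus proves the sum-form estimate, which is what the paper actually derives and what its closing Remark bounds, not the statement as written.

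Second, your claim that no convergence is needed on the right-hand side, because you keep it ``in its stated form'', does not work. At level $h$ the right-hand side involves $\Delta_h u$ and $W_h$, not $\partial_{x_j}u$ and $|u(x)-u(y)|^{p-2}$. You need $\limsup_{h\to0}$ of it to be bounded by the limiting integral, and Fatou only gives a lower bound on the $\liminf$. You need a genuine convergence argument, as the paper attempts via uniform $L^{p/(p-2)}$ bounds on the weights, $L^p$ convergence of the difference quotients, and the H\"older estimate in its final Remark.

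That argument requires $u\in L^p(0,T;W^{1,p}(\R^N))\cap L^p(0,T;W^{s,p}(\R^N))$ globally, because $y$ ranges over all of $\R^N$ and $\partial_{x_j}u(y)$ (or $\Delta_h u(y)$) appears. Under only the local hypotheses of the statement, the nonlocal remainder is not even defined. Your bound $|\zeta(x)-\zeta(y)|\le\min(1,|x-y|/((1-\sigma)\rho))$ tames only the kernel, not the factor $W_h|w(y)|^2$ for far-away $y$.
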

\begin{proof}
The proof follows DiBenedetto \cite{DiBenedetto1993}, Section~3,
with the nonlocal term $\Delta_p^s u$ treated on the same
footing as the local $p$-Laplacian.\\\\
In the next, consider the following notations :
 \begin{enumerate}
 \item $K_\rho := (-\rho,\rho)^N
= \{x\in\R^N : |x_i|<\rho,\;i=1,\ldots,N\}
$, $\rho>0$ fixed;
\item $K_\rho^{|\eta|}
:=\{x\in K_\rho:\operatorname{dist}(x,\partial K_\rho)>|\eta|\}$;
\item $\displaystyle[g]_h(.) := h^{-1}\int_t^{t+h}g(\cdot,\tau)\dd\tau$, $h>0$ (Steklov average of $g$);
\item $\displaystyle\delta_j g(x,t)
:= \frac{g(x+\eta e_j,t)-g(x,t)}{\eta}$,
 $x\in K_\rho^{|\eta|}$ (discrete derivative);
 \end{enumerate}
\noindent
Taking the Steklov average of \eqref{eq:PDE}
over $(t,t+h)$ yields and applying $\delta_j$ gives
\begin{equation}
[\delta_j u]_{h,t}
- \operatorname{div}([\delta_j(|\nabla u|^{p-2}\nabla u)]_h)
- [\delta_j(\Delta_p^s u)]_h = 0, \mbox{ a.e. in }K_\rho\times(0,T-h).
\label{eq:diff}
\end{equation}
Multiplying \eqref{eq:diff} by $\varphi = [\delta_j u]_h\zeta^2$, where $\zeta\in C_c^{\infty}(\R^N)$ is a cutoff function,
and integrating over $K_\rho^{|\eta|}\times(-\theta,t)$
for $-\theta<t\leq 0$.
\begin{align}\label{Steklov}
\begin{aligned}
\int_{-\theta}^t\!\int_{K_\rho}
[\delta_j u]_{h,t}\,[\delta_j u]_h\zeta^2\,\dd x\,\dd\tau&=-\int_{-\theta}^t\!\int_{K_\rho}
[\delta_j(|\nabla u|^{p-2}\nabla u)]_h
\cdot\nabla\left([\delta_j u]_h\zeta^2\right)\,\dd x\,\dd\tau\\
&-\int_{-\theta}^t\!\int_{K_\rho}
[\delta_j(\Delta_p^s u)]_h(x)\,[\delta_j u]_h\zeta^2
\,\dd x\,\dd\tau.
\end{aligned}
\end{align}
Since $\displaystyle\partial_t\left(\frac{1}{2}|[\delta_j u]_h|^2\right)
= [\delta_j u]_{h,t}\,[\delta_j u]_h$
and $\zeta(\cdot,-\theta)=0$:
\begin{align}
\begin{aligned}
\int_{-\theta}^t\!\int_{K_\rho}
[\delta_j u]_{h,t}\,[\delta_j u]_h\,\zeta^2\,\dd x\,\dd\tau
&= \int_{-\theta}^t\!\partial_t\left(\frac{1}{2}\int_{K_\rho}
|[\delta_j u]_h(\tau)|^2\zeta^2\,\dd x\right)\,\dd\tau
- \int_{-\theta}^t\!\int_{K_\rho}
|\delta_j u_h|^2\,\zeta\,\zeta_t\,\dd x\,\dd\tau\\
&= \int_{K_\rho}
\frac{|[\delta_j u]_h(t)|^2}{2}\zeta^2\,\dd x
- \int_{-\theta}^t\!\int_{K_\rho}
|\delta_j u_h|^2\,\zeta\,\zeta_t\,\dd x\,\dd\tau.
\end{aligned}
\label{eq:time}
\end{align}
The Steklov averages satisfy $[g]_h\to g$ in $L^p_{\mathrm{loc}}$
as $h\to 0$ for any $g\in L^p_{\mathrm{loc}}$.
Since $|\nabla u|\in L^p_{\mathrm{loc}}(\Omega_T)$ and
$u\in C_{\mathrm{loc}}(0,T;L^2_{\mathrm{loc}})$,
all limits as $h\to 0$ are justified by standard arguments
(see \cite[Proposition~3.1 chapter 8]{DiBenedetto1993}). Therefore, using \eqref{eq:time} in \eqref{Steklov} and letting $h\rightarrow0$ while $|\eta|>0$ remains fixed, we get
\begin{align}\label{DiBene0}
\begin{aligned}
\int_{K_\rho}
\frac{|\delta_j u(t)|^2}{2}\zeta^2\,\dd x&=\int_{-\theta}^t\!\int_{K_\rho}
|\delta_j u_h|^2\,\zeta\,\zeta_t\,\dd x\,\dd\tau-\int_{-\theta}^t\!\int_{K_\rho}
\delta_j(|\nabla u|^{p-2}\nabla u)
\cdot\nabla\left(\delta_j u\,\zeta^2\right)\,\dd x\,\dd\tau\\
&-\int_{-\theta}^t\!\int_{K_\rho}
\delta_j(\Delta_p^s u)(x)\,(\delta_j u)\zeta^2
\,\dd x\,\dd\tau.
\end{aligned}
\end{align}
$\bullet$ \textbf{Step 1 :} By the fundamental theorem of calculus and Young's inequality, we have
\begin{align}
\begin{aligned}
&-\delta_j(|\nabla u|^{p-2}\nabla u)(x)\cdot\nabla(\delta_j u\,\zeta^2)\\
&=-\left(\frac{1}{\eta}\int_0^1
\frac{\dd}{\dd\sigma}
\left[\left|\sigma\nabla u(x+\eta e_j)+(1-\sigma)\nabla u(x)\right|^{p-2}
\left(\sigma\nabla u(x+\eta e_j)+(1-\sigma)\nabla u(x)\right)\right]\dd\sigma\right)\\
&\qquad\times\left(\nabla(\delta_j u)\,\zeta^2+2(\delta_j u)\zeta\nabla\zeta\right)\\
&=-\nabla(\delta_j u)(p-1)\left(\int_0^1\left|\nabla^{(\sigma,\eta)}u\right|^{p-2}\dd\sigma\right)\left(\nabla(\delta_j u)\,\zeta^2+2(\delta_j u)\zeta\nabla\zeta\right)\\
&\le(p-1)\left(\int_0^1\left|\nabla^{(\sigma,\eta)}u\right|^{p-2}\dd\sigma\right)\left(-\left|\nabla(\delta_j u)\right|^2\,\zeta^2+\frac{1}{2}\left|\nabla(\delta_j u)\right|^2\,\zeta^2+2\left|\delta_j u\right|^2\left|\nabla\zeta\right|^2\right)\\
&\le(p-1)\left(\int_0^1\left|\nabla^{(\sigma,\eta)}u\right|^{p-2}\dd\sigma\right)\left(-\frac{1}{2}\left|\nabla(\delta_j u)\right|^2\,\zeta^2+2\left|\delta_j u\right|^2\left|\nabla\zeta\right|^2\right),
\end{aligned}
\label{eq:local_expand}
\end{align}
where $\nabla^{(\sigma,\eta)}u:= \sigma\nabla u(x+\eta e_j)+(1-\sigma)\nabla u(x)$. Consequently, 
\begin{align}
\begin{aligned}
&-\int_{-\theta}^t\!\int_{K_\rho}
\delta_j(|\nabla u|^{p-2}\nabla u)
\cdot\nabla(\delta_j u\,\zeta^2)\,\dd x\,\dd\tau\\
&\leq(p-1)\int_{-\theta}^t\!\int_{K_\rho}
\left(\int_0^1\left|\nabla^{(\sigma,\eta)}u\right|^{p-2}\dd\sigma\right)\left(-\frac{1}{2}\left|\nabla(\delta_j u)\right|^2\,\zeta^2+2\left|\delta_j u\right|^2\left|\nabla\zeta\right|^2\right)\,\dd x\,\dd\tau.
\end{aligned}
\label{eq:Young_local}
\end{align}
$\bullet$ \textbf{Step 2 :} Since the map $\displaystyle s\mapsto\frac{|u(.)-u(s)|^{p-2}(u(.)-u(s))}
{|.-s|^{N+sp}}$ is a bijection of $\R^N$ onto $\R^N$,
then
\begin{equation}
\begin{split}
\int_{\R^N}
\frac{|u(.)-u(z)|^{p-2}(u(.)-u(z))}
{|.-z|^{N+sp}}\,\dd z
&=\int_{\R^N}
\frac{|u(.)-u(z+\eta e_j)|^{p-2}\bigl(u(.)-u(z+\eta e_j)\bigr)}
{|.-(z+\eta e_j)|^{N+sp}}\,\dd z.
\end{split}
\label{eq:change_var}
\end{equation}
Therefore, we obtain
\begin{align}\label{sept}
\begin{aligned}
\delta_j(\Delta_p^s u)(x)
&:= \frac{1}{\eta}\PV\!\int_{\R^N}
\left(
\frac{|u(x+\eta e_j)-u(y)|^{p-2}
(u(x+\eta e_j)-u(y))}{|x+\eta e_j-y|^{N+sp}}\right.\\
&\left.- \frac{|u(x)-u(y)|^{p-2}(u(x)-u(y))}{|x-y|^{N+sp}}
\right)dy\\
&=\frac{1}{\eta}\PV\!\int_{\R^N}
\left(
|u(x+\eta e_j)-u(y+\eta e_j)|^{p-2}
(u(x+\eta e_j)-u(y+\eta e_j))\right.\\
&\left.- |u(x)-u(y)|^{p-2}(u(x)-u(y))
\right)
\frac{\dd y}{|x-y|^{N+sp}}.
\end{aligned}
\end{align}Also, by the fractional integration by part formula, we check that
\begin{align*}
&\int_{-\theta}^t\!\int_{K_\rho}
\delta_j(\Delta_p^s u)(x)\,(\delta_j u)\zeta^2
\,\dd x\,\dd\tau=\int_{-\theta}^t\!\int_{K_\rho}\int_{\R^N}
\frac{\Delta_jA_pu(x,y)}{|x-y|^{N+sp}}\left(\delta_j u(x)\zeta^2(x)-\delta_j u(y)\zeta^2(y)\right)dy\,\dd x\,\dd\tau,
\end{align*}
where $\displaystyle A_pu(x,y):=\left|u(x)-u(y)\right|^{p-2}\left(u(x)-u(y)\right)$ and $\Delta_jA_pu(x,y)=A_pu(x+\eta e_j,y+\eta e_j)-A_pu(x,y)$.
Now, set $\displaystyle Du(x,y):=u(x)-u(y)$ and notice that $$\left(\delta_j u(x)-\delta_j u(y)\right)=\frac{1}{\eta}\left(Du(x+\eta e_j,y+\eta e_j)-Du(x,y)\right).$$ Proceeding as precedently, we prove that
\begin{align*}
&-\delta_jA_p(x,y)\left(\delta_j u(x)\zeta^2(x)-\delta_j u(y)\zeta^2(y)\right)\\
&= -\left\{\frac{1}{\eta}\int_0^1
\frac{\dd}{\dd\sigma}
\left[\left|\sigma Du(x+\eta e_j,y+\eta e_j)+(1-\sigma)Du(x,y)\right|^{p-2}
\left(\sigma Du(x+\eta e_j,y+\eta e_j)\right.\right.\right.\\
&\left.\left.\left.+(1-\sigma)Du(x,y)\right)\right]\dd\sigma\right\}\left\{\left(\delta_j u(x)-\delta_j u(y)\right)\frac{\zeta^2(x)+\zeta^2(y)}{2}+\frac{\delta_j u(x)+\delta_j u(y)}{2}\left(\zeta^2(x)-\zeta^2(y)\right)\right\}\\
&=-\left(\delta_j u(x)-\delta_j u(y)\right)(p-1)\left(\int_0^1\left|\nabla^{(\sigma,\eta,y)}u\right|^{p-2}\dd\sigma\right)\\
&\qquad\times\left\{\left(\delta_j u(x)-\delta_j u(y)\right)\frac{\zeta^2(x)+\zeta^2(y)}{2}+\frac{\delta_j u(x)+\delta_j u(y)}{2}\left(\zeta(x)+\zeta(y)\right)\left(\zeta(x)-\zeta(y)\right)\right\}\\
&\le(p-1)\left(\int_0^1\left|\nabla^{(\sigma,\eta,y)}u\right|^{p-2}\dd\sigma\right)\left(-\left|\delta_j u(x)-\delta_j u(y)\right|^2\,\frac{\zeta^2(x)+\zeta^2(y)}{2}\right.\\
&\qquad\left.+8^{-1}\left|\delta_j u(x)-\delta_j u(y)\right|^2\,\left(\zeta(x)+\zeta(y)\right)^2+2^{-1}\left|\delta_j u(x)+\delta_j u(y)\right|^2\left|\zeta(x)-\zeta(y)\right|^2\right)\\
&\le(p-1)\left(\int_0^1\left|\nabla^{(\sigma,\eta,y)}u\right|^{p-2}\dd\sigma\right)\left(-\left|\delta_j u(x)-\delta_j u(y)\right|^2\,\frac{\zeta^2(x)+\zeta^2(y)}{2}\right.\\
&\qquad\left.+4^{-1}\left|\delta_j u(x)-\delta_j u(y)\right|^2\,\left(\zeta^2(x)+\zeta^2(y)\right)+2^{-1}\left|\delta_j u(x)+\delta_j u(y)\right|^2\left|\zeta(x)-\zeta(y)\right|^2\right)\\
&=(p-1)\left(\int_0^1\left|\nabla^{(\sigma,\eta,y)}u\right|^{p-2}\dd\sigma\right)\left(-\frac{1}{2}\left|\delta_j u(x)-\delta_j u(y)\right|^2\,\frac{\zeta^2(x)+\zeta^2(y)}{2}\right.\\
&\qquad\left.+2^{-1}\left|\delta_j u(x)+\delta_j u(y)\right|^2\left|\zeta(x)-\zeta(y)\right|^2\right)
\end{align*}
where $\nabla^{(\sigma,\eta,y)}u:=\sigma Du(x+\eta e_j,y+\eta e_j)+(1-\sigma)Du(x,y)$. Consequently,
\begin{align}\label{DiBene}
\begin{aligned}
&-\int_{-\theta}^t\!\int_{K_\rho}
\delta_j(\Delta_p^s u)(x)\,(\delta_j u)\zeta^2
\,\dd x\,\dd\tau\le(p-1)\int_{-\theta}^t\!\int_{K_\rho}\int_{\R^N}\frac{1}{|x-y|^{N+sp}}\left(\int_0^1\left|\nabla^{(\sigma,\eta,y)}u\right|^{p-2}\dd\sigma\right)\\
&\times\left(-\frac{1}{2}\left|\delta_j u(x)-\delta_j u(y)\right|^2\,\frac{\zeta^2(x)+\zeta^2(y)}{2}+2^{-1}\left|\delta_j u(x)+\delta_j u(y)\right|^2\left|\zeta(x)-\zeta(y)\right|^2\right)\,dy\,\dd x\,\dd\tau.
\end{aligned}
\end{align}
$\bullet$ \textbf{Step 3 :} Assembling \eqref{DiBene0},\eqref{eq:Young_local} and \eqref{DiBene}, rearranging the terms
and taking the supremum over $-\theta<t<0$ gives
\begin{align}
\begin{aligned}
&\sup_{-\theta<t<0}\int_{K_\rho}
\frac{|\delta_j u(t)|^2}{2}\zeta^2\,\dd x +\frac{p-1}{2}\int_{Q(\theta,\rho)}
\Bigl(\int_0^1|\nabla^{(\sigma,\eta)}u|^{p-2}\dd\sigma\Bigr)
|\nabla(\delta_j u)|^2\zeta^2\,\dd x\,\dd\tau\\
&+\frac{p-1}{2}\int_{-\theta}^t\!\int_{K_\rho}\int_{\R^N}\frac{1}{|x-y|^{N+sp}}\left(\int_0^1\left|\nabla^{(\sigma,\eta,y)}u\right|^{p-2}\dd\sigma\right)\left|\delta_j u(x)-\delta_j u(y)\right|^2\,\frac{\zeta^2(x)+\zeta^2(y)}{2}\,dy\,\dd x\,\dd\tau\\
&\leq \int_{Q(\theta,\rho)}
|\delta_j u|^2|\zeta_t|\zeta\,\dd x\,\dd\tau+2(p-1)\int_{Q(\theta,\rho)}
\Bigl(\int_0^1|\nabla^{(\sigma,\eta)}u|^{p-2}\dd\sigma\Bigr)
|\delta_j u|^2|\nabla\zeta|^2\,\dd x\,\dd\tau\\
&+2^{-1}(p-1)\int_{-\theta}^t\!\int_{K_\rho}\int_{\R^N}\frac{1}{|x-y|^{N+sp}}\left(\int_0^1\left|\nabla^{(\sigma,\eta,y)}u\right|^{p-2}\dd\sigma\right)\left|\delta_j u(x)+\delta_j u(y)\right|^2\left|\zeta(x)-\zeta(y)\right|^2\\
&\times\,dy\,\dd x\,\dd\tau.
\end{aligned}
\label{eq:energy_ineq}
\end{align}
$\bullet$ \textbf{Step 4 :} Letting $\eta\to 0$, notice that :
\begin{align*}
\begin{aligned}
&\delta_j u(x,\tau)\to\partial_{x_j}u(x,\tau)
\quad\text{a.e.\ and in }L^p_{\mathrm{loc}},\\
&\nabla^{(\sigma,\eta)}u(x)\to\nabla u(x,\tau)
\quad\text{a.e.\ for each }\sigma\in[0,1],\\
&\nabla^{(\sigma,\eta,y)}u(x)\to u(x,\tau)-u(y,\tau)
\quad\text{a.e.\ for each }\sigma\in[0,1].
\end{aligned}
\end{align*}
Since $\displaystyle\int_0^1|\nabla^{(\sigma,\eta)}u(x)|^{p-2}\dd\sigma
\to|\nabla u(x)|^{p-2}$ a.e. in $\R^N$ and $\displaystyle\int_0^1|\nabla^{(\sigma,\eta,y)}u(x)|^{p-2}\dd\sigma
\to|u(x)-u(y)|^{p-2}$ a.e. in $\R^N$, Fatou's lemma gives:
\begin{align*}
\begin{aligned}
&1)\,\,\liminf_{\eta\to 0}\left[
\frac{p-1}{2}\int_{Q(\theta,\rho)}
\Bigl(\int_0^1|\nabla^{(\sigma,\eta)}u|^{p-2}\dd\sigma\Bigr)
|\nabla(\delta_j u)|^2\zeta^2\right]
\geq
\frac{p-1}{2}\int_{Q(\theta,\rho)}
|\nabla u|^{p-2}|\nabla(\partial_{x_j}u)|^2\zeta^2;\\
&2)\,\,\liminf_{\eta\to 0}\left[
\frac{p-1}{2}\int_{Q(\theta,\rho)}\!\int_{\R^N}
\frac{|\delta_j u(x)-\delta_j u(y)|^2
\displaystyle\int_0^1|\nabla^{(\sigma,\eta,y)}u|^{p-2}\dd\sigma}
{|x-y|^{N+sp}}
\frac{\zeta^2(x)+\zeta^2(y)}{2}\,\dd y\,\dd x\,\dd\tau\right]\\
&\qquad\geq
\frac{p-1}{2}\int_{Q(\theta,\rho)}\!\int_{\R^N}
\frac{|u(x)-u(y)|^{p-2}
|\partial_{x_j}u(x)-\partial_{x_j}u(y)|^2}
{|x-y|^{N+sp}}\frac{\zeta^2(x)+\zeta^2(y)}{2}\dd y\,\dd x\,\dd\tau,
\end{aligned}
\end{align*}
By convexity of $r\mapsto r^{p-2}$ ($p\geq 4$), check that :
\begin{equation*}
\begin{aligned}
&\bullet\,\,\displaystyle\int_0^1|\nabla^{(\sigma,\eta)}u|^{p-2}\dd\sigma
\leq C\bigl(|\nabla u(x)|^{p-2}+|\nabla u(x+\eta e_j)|^{p-2}\bigr),\\
&\bullet\,\,\displaystyle\int_0^1|\nabla^{(\sigma,\eta,y)}u|^{p-2}\dd\sigma
\leq C\bigl(|u(x)-u(y)|^{p-2}+|u(x+\eta e_j)-u(y+\eta e_j)|^{p-2}\bigr)
\end{aligned}
\end{equation*}
which is bounded in $L^{p/(p-2)}_{\mathrm{loc}}$ uniformly
in $\eta$. Consequently, the dominated convergence theorem gives
\begin{align*}
\begin{aligned}
&3)\,\,\lim_{\eta\to 0}
\int_{Q(\theta,\rho)}|\delta_j u|^2|\zeta_t|\zeta\,\dd x\,\dd\tau
= \int_{Q(\theta,\rho)}|\partial_{x_j}u|^2|\zeta_t|\zeta\,\dd x\,\dd\tau\\
&4)\,\,\lim_{\eta\to 0}
\int_{Q(\theta,\rho)}
\Bigl(\int_0^1|\nabla^{(\sigma,\eta)}u|^{p-2}\dd\sigma\Bigr)
|\delta_j u|^2|\nabla\zeta|^2\,\dd x\,\dd\tau
= \int_{Q(\theta,\rho)}
|\nabla u|^{p-2}|\partial_{x_j}u|^2|\nabla\zeta|^2\,\dd x\,\dd\tau\\
&5)\,\,\lim_{\eta\to 0}\int_{-\theta}^t\!\int_{K_\rho}\int_{\R^N}\frac{1}{|x-y|^{N+sp}}\left(\int_0^1\left|\nabla^{(\sigma,\eta,y)}u\right|^{p-2}\dd\sigma\right)\left|\delta_j u(x)+\delta_j u(y)\right|^2\left|\zeta(x)-\zeta(y)\right|^2\,dy\,\dd x\,\dd\tau\\
&\qquad=\int_{Q(\theta,\rho)}\!\int_{\R^N}
\frac{|u(x)-u(y)|^{p-2}}
{|x-y|^{N+sp}}\left|\delta_j u(x)+\delta_j u(y)\right|^2\left|\zeta(x)-\zeta(y)\right|^2\dd y\,\dd x\,\dd\tau.
\end{aligned}
\end{align*}
\noindent
Passing to the limit in \eqref{eq:energy_ineq} and using 1) - 5), we deduce
\begin{align*}
\begin{aligned}
&\int_{Q(\theta,\rho)}
|\nabla u|^{p-2}|\nabla(\partial_{x_j}u)|^2\zeta^2\,\dd x\,\dd\tau\\
&\leq \gamma\left[\int_{Q(\theta,\rho)}
\Bigl(|\nabla u|^2\,|\zeta_t|\zeta+|\nabla u|^p|\nabla\zeta|^2\Bigr)\,\dd x\,\dd\tau\right.\\
&\left.+2\int_{Q(\theta,\rho)}\!\int_{\R^N}
\frac{|u(x)-u(y)|^{p-2}}
{|x-y|^{N+sp}}\left(\left|\partial_{x_j}u(x)\right|^2+\left|\partial_{x_j}u(y)\right|^2\right)\left|\zeta(x)-\zeta(y)\right|^2\dd y\,\dd x\,\dd\tau\right].
\end{aligned}
\end{align*}
Choosing $\zeta\equiv 1$ on $Q(\sigma\theta,\sigma\rho)$,
$\zeta=0$ outside $Q(\theta,\rho)$, with:
\begin{equation}
|\nabla\zeta|\leq\frac{1}{(1-\sigma)\rho},
\qquad
0\leq\zeta_t\leq\frac{1}{(1-\sigma)\theta},
\label{eq:cutoff}
\end{equation}
we conclude
\begin{align*}
\begin{aligned}
&\int_{Q(\theta,\rho)}
|\nabla u|^{p-2}|\nabla(\partial_{x_j}u)|^2\zeta^2\,\dd x\,\dd\tau\\
&\leq \gamma\left[\int_{Q(\theta,\rho)}
\Bigl(\frac{|\nabla u|^2}{(1-\sigma)\theta}+\frac{|\nabla u|^p}{(1-\sigma)^2\rho^2}
\Bigr)\,\dd x\,\dd\tau\right.\\
&\left.+\int_{Q(\theta,\rho)}\!\int_{\R^N}
\frac{|u(x)-u(y)|^{p-2}
}
{|x-y|^{N+sp}}\left|\partial_{x_j}u(x)-\partial_{x_j}u(y)\right|^2\left|\zeta(x)-\zeta(y)\right|^2\dd y\,\dd x\,\dd\tau\right].
\end{aligned}
\end{align*}
This completes the proof.
\end{proof}
\begin{remark}
Notice that if $u\in L^{p}(0,T;W^{1,p}(\R^N))\bigcap L^{p}(0,T;W^{s,p}(\R^N))$, then
\begin{align*}
\iint_{Q(\theta,\rho)}\!\int_{\R^N}
\frac{|u(x,\tau)-u(z,\tau)|^{p-2}
|\partial_{x_j}u(x,\tau)-\partial_{x_j}u(z,\tau)|^2\left|\zeta(x)-\zeta(y)\right|^2}
{|x-z|^{N+sp}}\dd z\,\dd x\,\dd\tau<\infty.
\end{align*}
Indeed, from Hölder's inequalities and Fubini's theorem, it holds
\begin{align}\label{DiBene_fractional}
\begin{aligned}
&\int_{Q(\theta,\rho)}\!\int_{\R^N}
\left[\frac{|u(x)-u(y)|}{|x-y|^{\frac{N+sp}{p}}}\right]^{p-2}
\!\!\bigl(|\partial_{x_j}u(x)|^2+|\partial_{x_j}u(y)|^2\bigr)
\left[\frac{|\zeta(x)-\zeta(y)|}{|x-y|^{\frac{N+sp}{p}}}\right]^2
\dd y\,\dd x\,\dd\tau\\
&=\int_{Q(\theta,\rho)}\!|\partial_{x_j}u(x)|^2
\int_{\R^N}
\left[\frac{|u(x)-u(y)|}{|x-y|^{\frac{N+sp}{p}}}\right]^{p-2}
\!\!\left[\frac{|\zeta(x)-\zeta(y)|}{|x-y|^{\frac{N+sp}{p}}}\right]^2
\dd y\,\dd x\,\dd\tau\\
&\quad+\int_{Q(\theta,\rho)}\!\int_{\R^N}
\left[\frac{|u(x)-u(y)|}{|x-y|^{\frac{N+sp}{p}}}\right]^{p-2}
\!\!|\partial_{x_j}u(y)|^2
\left[\frac{|\zeta(x)-\zeta(y)|}{|x-y|^{\frac{N+sp}{p}}}\right]^2
\dd y\,\dd x\,\dd\tau\\
&\leq\int_{Q(\theta,\rho)}\!|\partial_{x_j}u(x)|^2
\left(\int_{\R^N}
\frac{|u(x)-u(y)|^p}{|x-y|^{N+sp}}\dd y\right)^{\frac{p-2}{p}}
\!\!\left(\int_{\R^N}
\frac{|\zeta(x)-\zeta(y)|^p}{|x-y|^{N+sp}}\dd y\right)^{\frac{2}{p}}
\dd x\,\dd\tau\\
&\quad+\int_{Q(\theta,\rho)}
\left(\int_{\R^N}
\frac{|u(x)-u(y)|^p}{|x-y|^{N+sp}}\dd y\right)^{\frac{p-2}{p}}
\!\!\left(\int_{\R^N}
\frac{|\partial_{x_j}u(y)|^p|\zeta(x)-\zeta(y)|^p}
{|x-y|^{N+sp}}\dd y\right)^{\frac{2}{p}}
\dd x\,\dd\tau\\
&\leq\left(\int_{Q(\theta,\rho)}\!|\partial_{x_j}u(x)|^p\dd x\,\dd\tau\right)^{\frac{2}{p}}
\left(\int_{Q(\theta,\rho)}\!\int_{\R^N}
\frac{|u(x)-u(y)|^p}{|x-y|^{N+sp}}\dd y\!\left[\int_{\R^N}
\frac{|\zeta(x)-\zeta(y)|^p}{|x-y|^{N+sp}}\dd y\right]^{\frac{2p}{p(p-2)}}
\dd x\,\dd\tau\right)^{\frac{p-2}{p}}\\
&\quad+\left(\int_{Q(\theta,\rho)}
\int_{\R^N}
\frac{|u(x)-u(y)|^p}{|x-y|^{N+sp}}\dd y\,\dd x\,\dd\tau\right)^{\frac{p-2}{p}}
\!\!\left(\int_{0}^T\int_{\R^N}|\partial_{x_j}u(y)|^p\left[\int_{K_{\rho}}
\frac{|\zeta(x)-\zeta(y)|^p}
{|x-y|^{N+sp}}\dd x\right]\,\dd y\,\dd\tau\right)^{\frac{2}{p}}
\end{aligned}
\end{align}
Notice that since $\zeta\in C^{0,\alpha}$ with $\alpha>s$, then
\begin{align*}
\int_{\R^N}
\frac{|\zeta(x)-\zeta(y)|^p}{|x-y|^{N+sp}}\dd y&\le C^{p}\int_{|x-y|\le1}\frac{|x-y|^{\alpha p}}{|x-y|^{N+sp}}\dd y+2^{p}\|\zeta\|_{\infty}^p\int_{|x-y|>1}\frac{1}{|x-y|^{N+sp}}\dd y\\
&\le C^{p}\int_{|z|\le1}|z|^{N+sp-\alpha p}\dd z+2^{p}\|\zeta\|_{\infty}^p\int_{|z|>1}\frac{1}{|z|^{N+sp}}\dd z\\
&\le C^{p}|\mathbb{S}^{N-1}|\int_{0}^1r^{-1-sp+\alpha p}\dd r+2^{p}\|\zeta\|_{\infty}^p|\mathbb{S}^{N-1}|\int_{1}^{\infty}r^{-1-sp}\dd r\\
&\le |\mathbb{S}^{N-1}|\left(C^{p}\frac{1}{p(\alpha-s)}+2^{p}\|\zeta\|_{\infty}^p\frac{1}{sp}\right):=C_{\zeta}
\end{align*}
Consequently from \eqref{DiBene_fractional} we deduce
\begin{align*}
&\int_{Q(\theta,\rho)}\!\int_{\R^N}
\left[\frac{|u(x)-u(y)|}{|x-y|^{\frac{N+sp}{p}}}\right]^{p-2}
\!\!\bigl(|\partial_{x_j}u(x)|^2+|\partial_{x_j}u(y)|^2\bigr)
\left[\frac{|\zeta(x)-\zeta(y)|}{|x-y|^{\frac{N+sp}{p}}}\right]^2
\dd y\,\dd x\,\dd\tau\\
&\le 2C_{\zeta}^{\frac{2}{p}}\left\|u\right\|_{L^{p}(0,T;W^{s,p}(\R^N))}^{p-2}
\|u\|_{L^{p}(0,T;W^{1,p}(\R^N))}^2<\infty.
\end{align*}
\end{remark}

%\bibliographystyle{plain}
%\bibliography{main}
\end{document}